\DeclareMathOperator{\Res}{Res}
\theoremstyle{plain}
\newtheorem{theorem}{Theorem}[section]
\newtheorem*{theorem*}{Theorem}
\newtheorem{proposition}[theorem]{Proposition}
\newtheorem{corollary}[theorem]{Corollary}
\newtheorem{lemma}[theorem]{Lemma}
\theoremstyle{definition}
\newtheorem{remark}[theorem]{Remark}
\newtheorem*{mainresults}{Main Results}
\newcommand{\enm}[1]{\ensuremath{#1}}          %
\newcommand{\op}[1]{\operatorname{#1}}
\newcommand{\cal}[1]{\mathcal{#1}}
\renewcommand{\bar}[1]{\overline{#1}}
\newcommand{\EE}{\enm{\mathbb{E}}}
\newcommand{\NN}{\enm{\mathbb{N}}}
\newcommand{\ZZ}{\enm{\mathbb{Z}}}
\newcommand{\FF}{\enm{\mathbb{F}}}
\newcommand{\PP}{\enm{\mathbb{P}}}
\newcommand{\KK}{\enm{\mathbb{K}}}
\newcommand{\Dd}{\enm{\cal{D}}}
\newcommand{\Ee}{\enm{\cal{E}}}
\newcommand{\Gg}{\enm{\cal{G}}}
\newcommand{\Hh}{\enm{\cal{H}}}
\newcommand{\Ii}{\enm{\cal{I}}}
\newcommand{\Ll}{\enm{\cal{L}}}
\newcommand{\Mm}{\enm{\cal{M}}}
\newcommand{\Nn}{\enm{\cal{N}}}
\newcommand{\Oo}{\enm{\cal{O}}}
\newcommand{\Rr}{\enm{\cal{R}}}
\newcommand{\Tt}{\enm{\cal{T}}}
\newcommand{\Uu}{\enm{\cal{U}}}
\newcommand{\Xx}{\enm{\cal{X}}}
\newcommand{\Zz}{\enm{\cal{Z}}}
\renewcommand{\phi}{\varphi}
\renewcommand{\theta}{\vartheta}
\renewcommand{\epsilon}{\varepsilon}
\newcommand{\Aut}{\op{Aut}}
\newcommand{\codim}{\op{codim}}
\renewcommand{\to}[1][]{\xrightarrow{\ #1\ }}
\newcommand{\old}[1]{}
\newcommand{\vni}{\vskip 1.5pt \noindent}
\newcommand{\w}{\widetilde}
\newcommand{\h}{\ensuremath{\mathcal}}
\newcommand{\HL}{\ensuremath{\mathcal{H}^\mathcal{L}_}}
\newcommand{\HO}{\ensuremath{\mathcal{H}^\mathcal{}_}}
\newcommand{\ce}{C_{\mathcal{E}}}
\newcommand{\wce}{\widetilde{C_{\mathcal{E}}}}
\newcommand{\hce}{\widehat{C_{\mathcal{E}}}}
\begin{document}
\title[On the Hilbert scheme of smooth curves in $\mathbb{P}^5$]
{On the Hilbert scheme of smooth curves of degree $d=15$ in $\mathbb{P}^5$}

\thanks{The first named author is a member of GNSAGA of INdAM (Italy). The second named author was supported in part by the National Research Foundation of South Korea 
 (2022R1I1A1A01055306).} 

\author[E. Ballico]{Edoardo Ballico}
\address{Dipartimento di Matematica, Universita` degli Studi di Trento\\
Via Sommarive 14, 38123 Povo, Italy}
\email{ballico@science.unitn.it
}

\author[C. Keem]{Changho Keem}
\address{
Department of Mathematics,
Seoul National University\\
Seoul 151-742,  
South Korea}
\email{ckeem1@gmail.com \textrm{and} ckeem@snu.ac.kr}

\thanks{}

\subjclass{Primary 14C05, Secondary 14H10}

\keywords{Hilbert scheme, Algebraic curves, Linear series, Arithmetically Cohen-Maaculay}

\date{\today}
\maketitle

\begin{abstract}
We denote by $\mathcal{H}_{d,g,r}$ the Hilbert scheme of smooth curves, which is the union of components whose general point corresponds to a smooth, irreducible and non-degenerate curve of degree $d$ and genus $g$ in $\mathbb{P}^r.$
In this article, we study $\mathcal{H}_{15,g,5}$ for every possible genus $g$ and determine  when it is irreducible. We also study the moduli map $\mathcal{H}_{15,g,5}\rightarrow\mathcal{M}_g$
and several key properties such as  gonality of a general element as well as characterizing smooth elements of each component.
\end{abstract}
\vspace{-8pt}
\section{Introduction}
In this paper we study the Hilbert scheme of smooth and non-degenerate curves $X\subset \PP^5$ of degree $15$.

Let  $\h{H}_{d,g,r}$ denote the Hilbert scheme of smooth curves of degree $d$ and genus $g$ in $\PP^r$.
We determine the number of irreducible components, their dimensions and study the properties of $\h{H}_{15,g,5}$ such as the gonality of a general element in each component $\h{H}\subset \h{H}_{15,g,5}$. We also study the natural functorial map $\mu : \h{H} \rightarrow\h{M}_g$.

Severi \cite{Sev} asserted that 
the Hilbert scheme  $\h{H}_{d,g,r}$ is irreducible for 
triples $(d,g,r)$ in the range 

(i) $d\ge g+r~~~~$  

\noindent

\noindent
or in the much wider Brill-Noether range

(ii) $\rho(d,g,r):=g-(r+1)(g-d+r)\ge 0$.  

In general, determining the irreducibility of a Hilbert scheme is a non-trivial task.
The assertion of Severi turns out to be true for $r=3, 4$ under the condition (i); cf. \cite{E1, E2}. 
However our overall knowledge on $\HO{d,g,5}$ is not as  extensive as those on $\HO{d,g,3}$ or $\HO{d,g,4}$.

In this paper, we would like to concentrate on $\HO{d,g,5}$ when the degree $d$ of the curve in $\PP^5$ is relatively low. Specifically,  we focus our attention on curves in $\PP^5$ of degree $d=15$ and determine when $\HO{15,g,5}$ is irreducible for every possible genus $g\le \pi(15,5)=18$. Our results include:
\vspace{-1pt}
\noindent
\begin{mainresults}\label{mainresults}
\begin{enumerate}
\item[(i)] $\HO{15,g,5}$ is irreducible unless $g=13, 14, 16$; Propositions \ref{g=13}, \ref{main12}, \ref{main11}, \ref{10less}, \ref{maing=18}, Theorems \ref{main15}, \ref{main16} and  \cite[Theorem 1.1]{bumi}.
\item[(ii)] $\HO{15,g,5}=\emptyset$ for $g=17$; Proposition \ref{main17}.
\item[(iii)] An estimate of the dimension of the image of the forgetful map $\h{H}\rightarrow\h{M}_g$ 
and the gonality of curves in each component $\h{H}\subset\HO{15,g,5}$. 
\item[(iv)] A general $X \in \HO{15,15,5}$ has exactly one $g^2_8$, one base-point-free $g^1_5$ and 3 base-point-free $g^1_6$ 's; Theorem \ref{t1}.
\end{enumerate}
\end{mainresults} 

The reducibility of $\Hh_{15,14,5}$ is a result of another paper \cite{bumi}, which we listed in Main Results for a more comprehensive list of results concerning $\Hh_{15,g,5}$.
Our main reason for aiming at an extensive study of curves of degree $d=15$  in $\PP^5$ can be addressed as follows.  The case $g=14$, which we studied in \cite{bumi}, was hard enough requiring several new techniques. If we shift our attention to curves of higher genus e.g. $15\le g\le 18=\pi(15,5)$, there appear more interesting cases and components containing fruitful information on the geometry of projective algebraic curves. 

Besides using classical methods in analyzing curves on smooth and singular rational surfaces, we use
the irreducibility of Severi varieties of nodal curves on Hirzebruch surfaces \cite{ty} in a key step in Theorem \ref{main15}.
We also use and prove seemingly well-known facts stating that curves on singular surfaces of low degrees such as singular del Pezzo or cone over an elliptic curve are flat limits of curves on smooth surfaces when we need to rule out the possibility for a family of curves on such singular surfaces constituting a full component; Proposition \ref{b1}, Lemma \ref{se1}. We also use the notion of residual schemes in one of our main results; cf. Remark \ref{residue} and Theorem \ref{t1}.

Unless the genus $g$ of a smooth curve $X\subset\PP^r$ is fairly high with respect to $\deg X$, a curve in a component of $\HO{d,g,r}$ with the very ample  $|\Oo_X(1)|$ is by no means 
contained in a surface of low degrees.
However the residual series $|K_X(-1)|$ may induce possibly singular curves of  lower degrees which may sit on surfaces of small degrees with better classifications in lower dimensional projective spaces. If this occurs, studying (extrinsic) projective curves defined by the residual series may become easier than directly handling the original curve $X\subset\PP^r$.  We use this simple idea several times throughout the paper. 

The organization of this paper is as follows. In the next section, we prepare some preliminaries. In section 3, we treat curves of genus $g\le 13$. In sections 4, 5 we study curves of genus $15$,$16$
and in the final section we finish off  by considering  curves of genus $17, 18$ and making a small observation regarding larger-than-expected components of Hilbert schemes.
\subsection{Notation and conventions}
For notation and conventions, we  follow those in \cite{ACGH} and \cite{ACGH2}; e.g. $\pi (d,r)$ is the maximal possible arithmetic genus of an irreducible,  non-degenerate and reduced curve of degree $d$ in $\PP^r$ which is usually referred to the first Castelnuovo genus bound. We shall refer to  irreducible curves $X\subset \PP^r$ with the maximal possible genus $g=\pi (d,r)$ as {\it extremal curves}. $\pi_1(d,r)$ is the so-called second Castelnouvo genus bound which is the maximal possible arithmetic genus of  an irreducible, non-degenerate and reduced curve of degree $d$ in $\PP^r$ not lying on a  surface of minimal degree $r-1$; cf. \cite[p. 99]{he}, \cite[p. 123]{ACGH}.

Following classical terminology, a linear series of degree $d$ and dimension $r$ on a smooth curve $X$ is denoted by $g^r_d$.
A base-point-free linear series $g^r_d$ ($r\ge 2$) on $X$ is called {\it birationally very ample} when the morphism 
$X \rightarrow \mathbb{P}^r$ induced by  the $g^r_d$ is generically one-to-one onto (or is birational to) its image curve.  A base-point-free linear series $g^r_d$ on $X$  is said to be compounded of an involution ({\it compounded} for short) if the morphism induced by the linear series gives rise to a non-trivial covering map $X\rightarrow C'$ of degree $k\ge 2$. 

We  also recall the following standard set up and notation; cf. \cite[Ch. 21, \S 3, 5, 6, 11, 12]{ACGH2} or \cite[\S 1 and \S 2]{AC2}.
Let $\mathcal{M}_g$ be the moduli space of smooth curves of genus $g$. Given an isomorphism class $[C] \in \mathcal{M}_g$ corresponding to a smooth irreducible curve $C$, there exist a neighborhood $U\subset \mathcal{M}_g$ of  $[C]$ and a smooth connected variety $\mathcal{M}$ which is a finite ramified covering $h:\mathcal{M} \to U$, as well as  varieties $\mathcal{C}$ and $\mathcal{G}^r_d$ proper over $\mathcal{M}$ with the following properties:
\begin{enumerate}
\item[(1)] $\xi:\mathcal{C}\to\mathcal{M}$ is a universal curve, i.e. for every $p\in \mathcal{M}$, $\xi^{-1}(p)$ is a smooth curve of genus $g$ whose isomorphism class is $h(p)$,
\item[(2)] $\mathcal{G}^r_d$ parametrizes the pairs $(p, \mathcal{D})$, where $\mathcal{D}$ is possibly an incomplete linear series of degree $d$ and dimension $r$ on $\xi^{-1}(p)$.
\item[(3)] For any component $\Gg\subset\mathcal{G}^{r}_{d}$, 
\vspace{-2pt}
$$\dim\Gg\ge\lambda(d,g,r):=3g-3+\rho(d,g,r)$$ 
\vspace{-2pt}
and  for any component $\Hh\subset\h{H}_{d,g,r}$,
$$\dim\Hh\ge\h{X}(d,g,r):=\lambda (d,g,r)+\dim\Aut(\PP^r).$$
\end{enumerate}
\vni
Throughout, we work over an algebraically closed field $\KK$ of characteristic zero.
\vni
{\bf Acknowledgement:}
The authors are grateful to the referee for valuable comments and useful suggestions toward an
overall improvement of the readability and clarity of this paper.
\vspace{-8pt}
\section{Preliminaries and several related results}
\subsection{Curves on rational  surfaces and curves on a cone}
We collect  results on curves contained in low degree surfaces. We need the following proposition in the final part of the proof of Theorem \ref{main15}.
\begin{proposition}\label{b1}
Let $S\subset \PP^r$, $3\le r \le 9$, be a normal rational del Pezzo surface of degree $r$. Fix an integral curve $C\subset S$. Then $C$ is a flat limit of a family of curves, 
contained in  smooth del Pezzo surfaces of degree
$r$.
\end{proposition}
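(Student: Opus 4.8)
The plan is to exhibit $S$ as the special fibre of a flat family $\Ss\to T$ of degree-$r$ del Pezzo surfaces in $\PP^r$ whose general fibre is smooth, and then to deform $C$ sideways along this family so that the flat limit of the deformed curves is $C$ itself. The argument splits into a surface-smoothing step and a curve-deformation step, and the only genuine work is a single cohomology vanishing on $C$.

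For the surface step, recall that a normal del Pezzo surface of degree $r$ in $\PP^r$ is anticanonically embedded ($\Oo_S(1)=\Oo_S(-K_S)$, with $-K_S$ ample) and has at worst du Val (rational double point) singularities; its minimal resolution $\tilde S$ is a weak del Pezzo surface, hence for $3\le r\le 7$ the blow-up of $\PP^2$ at $9-r$ points in almost general position, the cases $r=8$ (where $\tilde S$ is a Hirzebruch surface, the singular model being the quadric cone) and $r=9$ (where $S=\PP^2$ is already smooth) being classical and analogous. Deforming the $9-r$ points into general position produces a flat family $\tilde\Ss\to T$ over a smooth pointed affine curve $(T,0)$ with $\tilde\Ss_0=\tilde S$ and $\tilde\Ss_t$ a genuine smooth del Pezzo for $t\neq 0$; contracting the relative $(-2)$-curves by the relative anticanonical morphism $-K_{\tilde\Ss/T}$ yields $\Phi\colon\tilde\Ss\to\Ss\subset\PP^r\times T$ over $T$ with $\Ss_0=S$ and $\Ss_t\subset\PP^r$ a smooth del Pezzo of degree $r$ for $t\neq0$. (Equivalently, one may simply invoke the classical fact that normal du Val del Pezzos of degree $r$ are flat limits of smooth ones in $\PP^r$.) Since del Pezzos and their du Val degenerations are rational with rational singularities, $h^1(\Oo_{\Ss_t})=h^2(\Oo_{\Ss_t})=0$ for every $t$.

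For the curve step, let $\pi\colon\op{Hilb}(\Ss/T)\to T$ be the relative Hilbert scheme parametrizing subschemes of the fibres $\Ss_t$, and let $[C]\in\pi^{-1}(0)$ be the point of $\op{Hilb}(S)$ corresponding to $C\subset S$. The tangent and obstruction spaces for deforming $C$ inside the fibres of $\Ss\to T$ at $[C]$ are $H^0(C,N_{C/S})$ and $H^1(C,N_{C/S})$, and the obstruction to lifting the tangent direction of $T$ again lies in $H^1(C,N_{C/S})$; hence the vanishing
$$H^1(C,N_{C/S})=0 \qquad(\star)$$
makes $\pi$ smooth at $[C]$. Then $\pi$ is open near $[C]$, so there is a section $T\to\op{Hilb}(\Ss/T)$ through $[C]$, i.e. a flat family $\{\Cc_t\}_{t\in T}$ with $\Cc_0=C$ and $\Cc_t\subset\Ss_t$ for $t\neq0$. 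As each $\Ss_t$ with $t\neq0$ is a smooth del Pezzo surface of degree $r$, this exhibits $C$ as a flat limit of curves lying on smooth del Pezzo surfaces, proving the proposition modulo $(\star)$.

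It remains to establish $(\star)$, which is where the main difficulty lies. When $C$ is a Cartier divisor on $S$ the computation is clean: $N_{C/S}=\Oo_C(C)=\Oo_S(-K_S)|_C$, and since $C$ is then a Gorenstein curve, adjunction $\omega_C=(\omega_S\otimes\Oo_S(C))|_C$ together with Serre duality on $C$ give
$$H^1(C,N_{C/S})\cong H^0\!\big(C,\,\omega_C\otimes\Oo_C(-C)\big)^\vee=H^0\!\big(C,\,\Oo_S(K_S)|_C\big)^\vee.$$
As $\deg_C\big(\Oo_S(K_S)|_C\big)=K_S\cdot C=-\Oo_S(1)\cdot C=-\deg C<0$ and $C$ is integral, the line bundle $\Oo_S(K_S)|_C$ has no global sections, so $(\star)$ holds. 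The genuine obstacle is the case in which $C$ passes through a singular point $p$ of $S$ and is only a Weil, not Cartier, divisor there: then $\mathcal{I}_C$ is not locally principal at $p$, so $N_{C/S}=\mathcal{H}om(\mathcal{I}_C/\mathcal{I}_C^2,\Oo_C)$ fails to be locally free at the finitely many such points and the duality above must be replaced by a local analysis at each du Val point, comparing $C$ with its strict transform on $\tilde S$ and controlling the length of the non-locally-free part of $N_{C/S}$. I expect this local computation to still yield $(\star)$, consistently with the fact that such curves continue to move in the expected dimension; alternatively, for those $C$ whose strict transform $\hat C\subset\tilde S$ has the same arithmetic genus as $C$, one may instead deform $\hat C$ in the smooth family $\tilde\Ss\to T$ and push it down by $\Phi$, recovering $C$ as the flat limit directly.
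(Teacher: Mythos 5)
Your route is genuinely different from the paper's: the paper does no deformation theory at all, but instead quotes Brevik's theorem \cite{Brevik} that the statement holds for normal rational \emph{cubic} surfaces ($r=3$), and reduces $4\le r\le 6$ to that case by blowing up $r-3$ general points of $S$ (equivalently, projecting from their span to $\PP^3$), applying Brevik's smoothing together with his Proposition 4.7 to keep the deforming curves disjoint from the exceptional lines, and then blowing back down to obtain smooth del Pezzo surfaces of degree $r$ in $\PP^r$. Within your plan, the relative-Hilbert-scheme formalism is correct (smoothness of $\op{Hilb}(\Ss/T)\to T$ at $[C]$ does follow from $H^1(C,N_{C/S})=0$), and your verification of $(\star)$ when $C$ is Cartier is fine --- although the intermediate equality $\Oo_C(C)=\Oo_S(-K_S)|_C$ is false unless $C\in|-K_S|$; what your displayed computation actually uses, namely adjunction and $\deg\bigl(\omega_S|_C\bigr)=K_S\cdot C<0$, is correct.

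The genuine gap is the case you explicitly postpone: $C$ Weil but not Cartier at some du Val point of $S$. ``I expect this local computation to still yield $(\star)$'' is a conjecture, not a proof, and this case is the entire content of the proposition, both in Brevik's paper and in its application here: curves avoiding $\mathrm{Sing}(S)$, or Cartier ones, deform for the easy reason you give, whereas the components one must rule out in the paper consist precisely of curves meeting the singular locus. Your fallback via the strict transform covers only the sub-case $p_a(\hat C)=p_a(C)$, i.e.\ $\hat C\to C$ an isomorphism (for instance $C$ smooth at the du Val points). If $C$ is singular at a du Val point in such a way that the resolution lowers the arithmetic genus --- e.g.\ $\hat C\cdot E=2$ for an exceptional $(-2)$-curve $E$, which does occur for integral curves on singular del Pezzo surfaces in this range --- then the curves $\Phi(\hat C_t)$ have arithmetic genus $p_a(\hat C)<p_a(C)$, so their flat limit has the Hilbert polynomial of $\hat C_t$ and cannot equal $C$: it is $C$ together with embedded points at the singularities of $S$. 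Thus neither of your two mechanisms produces $C$ itself as a limit in this case, and closing it requires either the deferred local analysis of $N_{C/S}=\mathcal{H}om(\Ii_C/\Ii_C^2,\Oo_C)$ at the du Val points (whose outcome is not at all obvious), or an appeal, as in the paper, to Brevik's theorem, whose proof is a global argument (simultaneous resolution, control of divisor classes on nearby smooth surfaces) rather than a normal-sheaf computation.
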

\begin{proof}
For $7\le r\le 9$, the proposition is trivial because $S$ is smooth. The case $r=3$ is the main result of \cite{Brevik}. Assume $r>3$. Fix a general $A\subset S$ such that $\#A=r-3$ and let $S'$ be the blowing-up of $S$ at all points of $A$. Let $M\subset \PP^r$ be the $(r-4)$-dimensional linear space spanned by $A$. For a general $A$ the surface $S'$ has the same number of singularities as $S$ and it is isomorphic to the cubic surface $S''\subset \PP^3$ which is the closure in $\PP^3$ of the image of $S\setminus A$ by the linear projection ${\ell} : \PP^r\setminus M\to \PP^3$; \cite[p. 389]{Dolgachev}. Since $A$ is general  we know $C\cap A=\emptyset$. Since the linear projection $\ell$ restricted to $C$ is an embedding, $C\cong \ell(C)$  and $\deg(\ell(C)) =\deg (C)$. 
By \cite{Brevik} there is a smoothing family of $S''$ on which
the curve $\ell(C)$ is a flat limit of a family of curves on nearby smooth cubic surface $S_t$; $t\in T\setminus \{0\}$, $T$ an integral quasi-projective curve, $0\in T$, $S_0= S''$ and $S_t$ smooth for all $t\in T\setminus \{0\}$.  Obviously $\ell(C)\cap \ell_i=\emptyset$ for all  $r-3$ exceptional divisors $\ell _i$ of $S''$ associated to the exceptional divisors of the blowing up of the $r-3$ points of $A$.
By \cite[Proposition 4.7]{Brevik} we may assume that the curve $\ell (C)$ is a limit of curves $C_t\subset S_t$, $t\in T\setminus \{0\}$, which have empty intersection with the $r-3$ exceptional divisors $\ell_i(t)$ of $S_t$, $t\in T\setminus \{0\}$ specializing to the $r-3$ exceptional divisors $\ell _i$ of $S''$.
For $t\in T\setminus \{0\}$, let $\tilde{S}_t$ the surface obtained from $S_t$ by  blowing down the $r-3$ divisors $\ell_i(t)$. Each surface $\tilde{S}_t$ is a smooth del Pezzo surface in $\PP^r$. Since $C_t\cap \ell_i(t)=\emptyset$,
each curve $C_t$ is isomorphic to a curve $\tilde{C}_t\subset \tilde{S}_t$. The flat family over $T$ gives rise to a flat family of smooth del Pezzo surfaces $\tilde{S}_t$, $t\in T\setminus \{0\}$ with $S$ as its flat limit together with a flat family  curves $\tilde{C}_t$, $t\in T\setminus \{0\}$ with $C$ as its flat limit.
\end{proof}
\vspace{-4pt}

When we deal with curves on a cone in $\PP^r$  (usually $r=4,5$) over a rational normal curve, we use the following elementary facts in several places of this paper; in the proofs of Proposition \ref{g=13}, Theorems \ref{main15}, \ref{main16} and Proposition \ref{maing=18}. We include these facts mainly for fixing notation.
\begin{remark}\label{cone1}
\begin{itemize}
\item[(a)] Let $S\subset\PP^r$ ($r\ge 3$)  be a cone over a 
rational normal curve $R\subset H\cong\PP^{r-1}$ with vertex outside $H$.
Recall that $S$ is the image of the birational morphism $\FF_{r-1}=\mathbb{P}(\h{O}_{\PP^1}\oplus\h{O}_{\PP^1}(r-1))\rightarrow S\subset\PP^r$ induced by $|h+(r-1)f|$, where $h^2=-(r-1)$ and
$f$ is a fibre of $\FF_{r-1}\rightarrow\PP^1$. 
Let $C\subset S$ be an integral curve of degree $d$, with  the strict transformation $\w{C}$ of $C$ under  $\mathbb{F}_{r-1}\rightarrow S$. 
~Setting $k=\w{C}\cdot f$, we have $\w{C}\equiv kh+df$ and
\begin{equation}\label{conevertex1}
0\le \w{C}\cdot h=(kh+df)\cdot h=d-(r-1)k=m
\end{equation} where $m$ is the multiplicity of $C$ at the vertex of $S$.
\item[(b)] In the case  (a), there is no smooth $C\subset S\subset\PP^5$  with $\deg C=15$ by \eqref{conevertex1};  i.e. there is no integer $k$ such that
$d=15=m+4k$ with $m\in\{0,1\}$. If $S\subset\PP^5$ is a Veronese surface, there is no irreducible $C\subset S$ with odd $\deg C$.

\item[(c)]  Therefore  we may assume that a quartic  surface $S\subset\PP^5$ containing a smooth curve of degree $d=15$ is a rational normal scroll.

\item[(d)] 
Let $S\subset\PP^r$ be a rational normal surface scroll.
For $X\in|aH+bL|$ -- where $H$ (resp. $L$) is the class of a hyperplane section (resp. the class a line of the ruling) -- we have
\vspace{-3pt}
\begin{equation}\label{sd}
\deg X=(r-1)a+b, ~
p_a(X)=\tbinom{a-1}{2}(r-1)+(r-2+b)(a-1)
\end{equation}
\begin{equation}\label{sdn}
 \dim|aH+bL|=\frac{a \left(a +1\right) (r-1)}{2}+\left(a +1\right) \left(b +1\right)-1
 \end{equation}
 \begin{equation}\label{sf}\dim\h{S}(r)= (r+3)(r-1)-3.  \end{equation}
 where $\h{S}(r)$ is the irreducible family of rational normal surface scrolls in $\PP^r$; \cite[p. 91]{he}.
\end{itemize}
\end{remark}
\subsection{Some remarks on moduli maps}
Let $\mu: \HO{15,g,5}\rightarrow \Mm_g$ denote the natural functorial map - which we call the {\it moduli map} - sending $X\in\HO{15,g,5}$ to its isomorphism class
$\mu (X)\in\h{M}_g$.
In the last two sections, we study $\HO{15,g,5}$ in some detail. For example, we use the following for 
the dimension estimate of the image of the moduli map $\HO{15,18,5}\rightarrow \h{M}_{18}$; Proposition \ref{z6}.
\vspace{-2pt}
\begin{proposition}\label{z5}
Let $Q\subset\PP^3$ be a smooth quadric surface. 
Fix integers $3\le a<b\le 2a-1$ and  a smooth $X\in |\Oo_Q(a,b)|$. Then $X$ is $a$-gonal with a unique $g^1_a$, no base-point-free $g^1_c$ for $a<c<b$ and a unique base-point-free $g^1_b$, the pencil induced by $|\Oo_Q(1,0)|$.
\end{proposition}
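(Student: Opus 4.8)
The plan is to pin down the two asserted pencils by a direct cohomology computation and then to extract every other low-degree pencil from the Castelnuovo--Severi inequality. Write $G:=\mathcal{O}_X(0,1)$ and $F:=\mathcal{O}_X(1,0)$. Twisting the restriction sequence $0\to\mathcal{O}_Q(m-a,n-b)\to\mathcal{O}_Q(m,n)\to\mathcal{O}_X(m,n)\to0$ and computing with the Künneth formula on $Q\cong\mathbb{P}^1\times\mathbb{P}^1$ gives $\deg G=a$, $\deg F=b$, $h^0(G)=h^0(F)=2$, and both base-point-free; adjunction gives $g=(a-1)(b-1)$ and $K_X=\mathcal{O}_X(a-2,b-2)$. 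Thus $|G|$ and $|F|$ are the $g^1_a$ and the $g^1_b$ of the statement, $\phi_G,\phi_F$ are the two projections, and $(\phi_F,\phi_G)$ is the embedding $X\hookrightarrow Q$.

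The key step is a dichotomy for an arbitrary base-point-free $A=g^1_c$ with $c\le b$ and $A\neq G$, obtained by studying $(\phi_A,\phi_G):X\to\mathbb{P}^1\times\mathbb{P}^1$. If this map fails to be birational onto its image, its Stein factorization produces $\sigma:X\to Y$ of degree $e\ge2$ with $e\mid a$, $e\mid c$ and $g(Y)\le(a/e-1)(c/e-1)$, through which both $\phi_A$ and $\phi_G$ factor; since $\phi_F$ cannot factor through $\sigma$ (else the embedding $(\phi_F,\phi_G)$ would), the map $(\sigma,\phi_F)$ is birational onto its image and Castelnuovo--Severi gives $(a-1)(b-1)\le e\,g(Y)+(e-1)(b-1)$. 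Substituting the bound on $g(Y)$ forces $b\le c/e\le b/2$ when $e<a$, which is absurd, so $e=a$, whence $a\mid c$ and $A=\mathcal{O}_X(0,c/a)$. If instead $(\phi_A,\phi_G)$ is birational onto its image, Castelnuovo--Severi gives $g\le(c-1)(a-1)$, i.e. $c\ge b$, with equality exactly when $c=b$; and then $X$ is carried isomorphically onto a smooth type-$(a,b)$ curve $C'$ on a second quadric whose rulings cut out $G$ and $A$.

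The first three assertions now follow at once (any pencil of degree $\le a$ is automatically base-point-free, a base point lowering the degree below the gonality). A $g^1_c$ with $c<a$ is $\neq G$, has $c<b$ and $a\nmid c$, so none exists; hence $X$ is $a$-gonal with gonality pencil $|G|$. A $g^1_a\ne G$ would give $a\mid a$ and $A=\mathcal{O}_X(0,1)=G$, so the $g^1_a$ is unique. A base-point-free $g^1_c$ with $a<c<b$ would need $a\mid c$, impossible because $a<c<b\le2a-1<2a$ --- this is precisely where $b\le2a-1$ enters. For uniqueness of the $g^1_b$, let $A$ be base-point-free of degree $b$ with $A\ne F$; since $A\ne G$ and $a\nmid b$ (again as $a<b<2a$), the dichotomy lands in its equality branch, embedding $X$ as a smooth type-$(a,b)$ curve $C'$ with rulings $G$ and $A$. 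Comparing $K_X=\mathcal{O}_X(a-2,b-2)$ with the adjunction formula on $C'$ yields only $A^{\otimes(a-2)}=F^{\otimes(a-2)}$, i.e. $A=F\otimes N$ with $N$ an $(a-2)$-torsion class.

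I expect this residual torsion to be the main obstacle. It disappears when $a=3$, since then $a-2=1$ forces $N=\mathcal{O}_X$ and $A=F$. For $a\ge4$ a nontrivial $N$ must still be excluded. As a first constraint, every base-point-free $g^1_b$ satisfies $h^0(A\otimes G^{-1})=0$: a nonzero section of $A\otimes G^{-1}$ would identify $H^0(G)\xrightarrow{\sim}H^0(A)$ by multiplication and place its positive-degree divisor in the base locus of $|A|$, contradicting base-point-freeness. I would then study the splitting type of $\mathrm{pr}_{2*}A$ on $\mathbb{P}^1$ relative to $\mathrm{pr}_{2*}\mathcal{O}_X=\mathcal{O}\oplus\mathcal{O}(-b)^{\oplus(a-1)}$, pinning it down from the values $h^0(A\otimes G^{\otimes k})$ (all computable on the embedded $C'$) together with the $\mathrm{pr}_{2*}\mathcal{O}_X$-module structure, and use this to conclude $N=\mathcal{O}_X$, hence $A=F$. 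Verifying that the module structure rigidifies the splitting enough to kill the torsion is the step I expect to be the most delicate.
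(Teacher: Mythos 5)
Your dichotomy via $(\phi_A,\phi_G)$ and Castelnuovo--Severi is sound and correctly disposes of the first three assertions (gonality $a$, uniqueness of the $g^1_a$, non-existence of base-point-free $g^1_c$ for $a<c<b$); for those the paper simply cites Martens and Paxia--Raciti--Ragusa, so up to that point your argument is a legitimate self-contained alternative. One small imprecision there: to apply Castelnuovo--Severi to $(\sigma,\phi_F)$ you must exclude a common factorization of $\sigma$ and $\phi_F$ through \emph{any} map $\tau$ of degree $\ge 2$, not only a factorization of $\phi_F$ through $\sigma$ itself; the fix is the same observation you already make, since $\phi_G$ factors through $\sigma$, hence through $\tau$, so $(\phi_F,\phi_G)$ would factor through $\tau$, contradicting that it is an embedding.

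The genuine gap is the fourth assertion, the uniqueness of the base-point-free $g^1_b$ --- which is exactly the part the paper proves in detail rather than cites. Your equality-case analysis only yields $A^{\otimes(a-2)}\cong F^{\otimes(a-2)}$, i.e. $A=F\otimes N$ with $N$ torsion of order dividing $a-2$, and you leave the exclusion of nontrivial $N$ as a hoped-for computation. That step is not routine, and the tools you propose cannot close it as stated: the numerical data you plan to use, namely the values $h^0(A\otimes G^{\otimes k})$ and hence the splitting type of the pushforward of $A$ under $\phi_G$, are \emph{identical} for $A$ and for $F$, because both are computed by the same adjunction/restriction sequences for a smooth $(a,b)$-curve on a smooth quadric; so the splitting type alone can never distinguish $F$ from $F\otimes N$, and whether the module structure over the pushforward of $\Oo_X$ rigidifies the situation is precisely what you have not shown. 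The paper sidesteps the torsion issue entirely by working at the level of divisors: it first notes any $g^1_b$ is complete, takes a general member $A$ (a set of $b$ points of $Q$), observes $h^1(Q,\Ii_A(a-2,b-2))>0$, and then runs a Horace-type residuation argument with lines of the ruling $|\Oo_Q(0,1)|$ to produce a line $R\in|\Oo_Q(1,0)|$ with $A\subset R$; since $\deg(R\cap X)=b=\#A$, this forces $A=R\cap X$, hence $g^1_b=|\Oo_X(1,0)|$ outright, with no torsion ambiguity ever arising. To complete your proof you would need either to actually carry out (and justify) the module-structure argument, or to import a divisor-theoretic argument of this kind.
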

\begin{proof}
By \cite[Corollary 1]{Martens}, $X$ is $a$-gonal with a unique pencil $g^1_a$ induced by the pencil $|\Oo_Q(0,1)|$.
By \cite[Theorem 2.1]{paxia} there is no base-point-free $g^1_c$ with $a<c<b$. Take any $g^1_b$ and take a general $A\in g^1_b$. Since $X$ has no base-point-free $g^1_{b-1}$ (or if $b=a+1$ a unique $g^1_a$), the linear series $g^1_b$ is complete.  By adjunction
we have $h^1(Q,\Ii_A(a-2,b-2)) >0$. To prove the lemma  it is sufficient to find $R\in |\Oo_Q(1,0)|$ containing $A$.

Set $A_0:= A$. Take any $L_1\in |\Oo_Q(0,1)|$ such that $e_1:= \#(L_1\cap A_0)$ is maximal and set $A_1:= A_0\setminus A_0\cap L_1$. Take any $L_2\in |\Oo_Q(0,1)|$  such that $e_2:= \#(L_2\cap A_1)$ is maximal and set $A_2:= A_1\setminus A_1\cap L_2$. We define recursively the sets $A_3,\dots,A_b$, the elements $L_3,\dots ,L_b\in |\Oo_Q(0,1)|$ and the integers $e_3,\dots ,e_b$ with $e_i:= \#(A_{i-1}\cap L_i)$ maximal and $A_i:= A_{i-1}\setminus A_{i-1}\cap L_i$. Since at each step we require the maximality of the integer $i$ and $A_{i-1}\supseteq A_i$,
we have $e_i\ge e_{i+1}$ for all $1\le i\le b-1$. Note that if $e_i=0$, then $A_{i-1}=\emptyset$ 
and $A_j=\emptyset$ for all $j>i-1$ with $j\le b$. Thus $A_b=\emptyset$.
\vni
Let $c$ be the first integer such that $\#A_c\le 2$. We saw that $c\le b-2$ and $c=b-2$ if and only if $e_1=1$. Set $T:= L_1\cup \cdots \cup L_c\in |\Oo_Q(0,c)|$. 
Consider the exact sequence
\begin{equation}\label{eqce1}
0\to \Ii_{A_c}(a-2,b-2-c) \to \Ii_A(a-2,b-2)\to \Ii_{T\cap A,T}(a-2,b-2)\to 0.
\end{equation}
First assume $e_1\ge 2$ and hence $c\le b-3$. Since $\Oo_Q(a-2,b-2-c)$ is very ample and $\#A_c\le 2$, we have $h^1(Q,\Ii_{A_c}(a-2,b-2-c))=0$.
Thus the long cohomology exact sequence of \eqref{eqce1} gives $h^1(T,\Ii_{A\cap T}(a-2,b-2))>0$. The set $T$ has $c$ connected components, $L_1,\dots ,L_c$, with $L_i\cong \PP^1$. Since 
$
\displaystyle{h^1(T,\Ii_{A\cap T,T}(a-2,b-2))= \sum_{i=1}^{c} h^1(L_i,\Ii_{A\cap T,T}(a-2,b-2)_{|L_i})>0,}
$
we get $h^1(L_i,\Ii_{A\cap T,T}(a-2,b-2)_{|L_i})>0$ and $e_i\ge a$ for some $i$; by $\deg \Oo_{L_i}(a-2,b-2) =a-2$ for all $i$, we have $h^1(L_i,\Ii_{A\cap T,T}(a-2,b-2)_{|L_i})=0$ if $e_i\le a-1$ for all $i$.
 But then $A$ contains a subset $A'$ such that $\#A'=a$, $A'\subset L_i$ and $|A'|$ is the $g^1_a$ on $X$.
Since the $g^1_b$ is base-point-free and complete, we get a contradiction.

Now assume $e_1=1$. Thus $c=b-2$ and $\#(A\cap L)\le 1$ for all $L\in |\Oo_Q(0,1)|$. Thus $h^1(T,\Ii_{T\cap A, T}(a-2,b-2)) =0$. Hence the long cohomology exact sequence
of \eqref{eqce1} gives $h^1(Q,\Ii_{A_c}(a-2,0))>0$, i.e. two points in $A_c=\{p,s\}$ fails to impose independent conditions on 
$|\Oo_Q(a-2,0)|$. Therefore there is  $R\in |\Oo_Q(1,0)|$ such that $A_c\subset R$. 
Fix any $q\in A\setminus A_c$.
Since $\#(A\cap L)\le 1$ for all $L\in |\Oo_Q(0,1)|$, there are $D_1,\dots,D_{b-2}\in |\Oo_Q(0,1)|$ such that $T':= D_1\cup \dots \cup D_{b-2}$ contains $A\setminus \{p,q\}$.
Using $T'$ instead of $T$, we get the existence of $R'\in |\Oo_Q(1,0)|$ such that $\{p,q\}\subset R'$. Since $R$ is the unique element of $|\Oo_Q(1,0)|$ containing $p$, $R'=R$.
Thus $A\subset R$.
\end{proof}
The following is a consequence of Proposition \ref{z5} which we use in \textsection{6.1}; Propostion \ref{z6}.
\begin{corollary}\label{z5.010}
Fix integers $2a-1\ge b>a\ge 3$ and smooth $X,\tilde{X}\in |\Oo_Q(a,b)|$. If $\tilde{X}\cong X$ as abstract curves, then there is $v\in \Aut(\PP^1)\times \Aut(\PP^1)$ such that $v({X})={\tilde{X}}$.
\end{corollary}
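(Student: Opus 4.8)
The plan is to exploit the two uniqueness assertions of Proposition~\ref{z5}: on a smooth $X\in|\Oo_Q(a,b)|$ both the gonality pencil $g^1_a$ and the base-point-free $g^1_b$ are unique, and each is cut out by one of the two rulings of $Q$. Since these pencils are intrinsic to the abstract curve, an isomorphism $X\cong\tilde X$ must respect them, and I can then reconstruct the ambient automorphism ruling by ruling. First I would fix the identification $Q=\PP^1\times\PP^1$ with the two projections $p_1,p_2\colon Q\to\PP^1$, so that $\Oo_Q(1,0)=p_1^{\ast}\Oo_{\PP^1}(1)$ and $\Oo_Q(0,1)=p_2^{\ast}\Oo_{\PP^1}(1)$, and record that $\Aut(\PP^1)\times\Aut(\PP^1)$ acts on $Q$ by $v(x,y)=(\beta(x),\alpha(y))$ for $v=(\beta,\alpha)$. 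Restricting the rulings to $X$, the morphism $p_1|_X\colon X\to\PP^1$ has degree $b$ and induces the base-point-free $g^1_b$, while $p_2|_X\colon X\to\PP^1$ has degree $a$ and induces the $g^1_a$; a short computation on $Q$ (using $a,b\ge 3$, $b>a$) gives $h^0(\Oo_X(1,0))=h^0(\Oo_X(0,1))=2$, so both pencils are complete. The same holds on $\tilde X$.

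Next I would let $\phi\colon X\to\tilde X$ be an abstract isomorphism and transport linear series through it. Because the degree, the dimension and base-point-freeness of a linear series are intrinsic invariants, $\phi$ induces a degree- and dimension-preserving bijection between the $g^1$'s of the two curves. By Proposition~\ref{z5} each of $X,\tilde X$ carries exactly one $g^1_a$ and exactly one base-point-free $g^1_b$; since $a\neq b$, the two pencils cannot be interchanged, so $\phi$ must carry the $g^1_a$ of $X$ to the $g^1_a$ of $\tilde X$ and the base-point-free $g^1_b$ of $X$ to that of $\tilde X$. Concretely, the composite $p_2|_{\tilde X}\circ\phi\colon X\to\PP^1$ induces the (unique) $g^1_a$ of $X$, the same pencil induced by $p_2|_X$.

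I would then invoke the standard fact that two morphisms $X\to\PP^1$ inducing the same base-point-free pencil differ by an element of $\Aut(\PP^1)$: this yields $\alpha\in\Aut(\PP^1)$ with $p_2|_{\tilde X}\circ\phi=\alpha\circ(p_2|_X)$, and likewise $\beta\in\Aut(\PP^1)$ with $p_1|_{\tilde X}\circ\phi=\beta\circ(p_1|_X)$ from the $g^1_b$. Writing $\iota_X\colon X\into Q$ and $\iota_{\tilde X}\colon\tilde X\into Q$ for the inclusions and setting $v(x,y)=(\beta(x),\alpha(y))$, I would compare the two projections: for $P\in X$ the point $v(\iota_X(P))$ has first coordinate $\beta\big(p_1(\iota_X(P))\big)=p_1(\phi(P))$ and second coordinate $\alpha\big(p_2(\iota_X(P))\big)=p_2(\phi(P))$, so $v\circ\iota_X=\iota_{\tilde X}\circ\phi$. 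Hence $v$ maps the embedded $X$ onto $\tilde X$, i.e. $v(X)=\tilde X$, as required.

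The substantive content is entirely supplied by Proposition~\ref{z5}; what remains requires only care on two points. First, one must know the ruling pencils are complete and base-point-free, so that ``the same $g^1$'' genuinely forces two morphisms to $\PP^1$ to agree up to $\Aut(\PP^1)$ — this is the cohomological remark above. Second, the strict inequality $a<b$ is what prevents $\phi$ from swapping the two rulings, and it is precisely this that places the resulting automorphism in $\Aut(\PP^1)\times\Aut(\PP^1)$ rather than in the full $\Aut(Q)$ (where the factor-exchange would appear when $a=b$). Neither is a real obstacle, so I expect the argument to be short.
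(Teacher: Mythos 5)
Your proposal is correct and follows essentially the same route as the paper: both arguments rest on the uniqueness statements of Proposition~\ref{z5} to force the isomorphism to carry the $g^1_a$ and the base-point-free $g^1_b$ of $X$ to those of $\tilde{X}$ (the hypothesis $a<b$ preventing any interchange), and then assemble the resulting pair of automorphisms of the two $\PP^1$'s into $v\in\Aut(\PP^1)\times\Aut(\PP^1)$. The only difference is presentational — the paper phrases this via pulled-back line bundles and induced maps on the linear systems $|\Oo(1,0)|$ and $|\Oo(0,1)|$, while you work with the projection morphisms and verify the commutation $v\circ\iota_X=\iota_{\tilde{X}}\circ\phi$ explicitly, which is a harmless (indeed slightly more complete) elaboration of the same idea.
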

\begin{proof}
Take any isomorphism $\tilde{X}\stackrel{u}{\cong}{X}$.  The line bundles $R_1:= u^\ast (\Oo_X(0,1))$ and $R_2:= u^\ast (\Oo_X(1,0))$ are the unique base-point-free line bundles on $\tilde{X}$ of degrees $a$ and $b$, respectively by  Proposition \ref{z5}, and hence  $R_1= \Oo_{\tilde{X}}(0,1)$ and $R_2 =\Oo_{\tilde{X}}(1,0)$.
Thus $u$ induces  isomorphisms $|\Oo_{{X}}(1,0)|=\PP^1\to \PP^1=|\Oo_{\tilde{X}}(1,0)|$ and $|\Oo_{{X}}(0,1)| \to |\Oo_{\tilde{X}}(0,1)|$ and the corresponding pair in $\Aut(\PP^1)\times \Aut(\PP^1)$  induces
$v$.\end{proof}

We use the following remark,
similar to Corollary \ref{z5.010}, for the study of the moduli map $\mu: \HO{15,16,5}\rightarrow \Mm_{16}$ in \textsection {5.2}; Propositions \ref{mz5}, \ref{mz6}.
\begin{remark}\label{z5.011}
Let ${S}$ and $\tilde{S}$ be rational normal surface scrolls in $\PP^5$. If ${S}\stackrel{u}{\cong} \tilde{S}$ as abstract varieties, then they are projectively equivalent. To see this,  assume $S\cong\w{S}\cong \FF_2$. Note that the only very ample line bundle on $\FF_2$ inducing an embedding onto a surface of minimal degree $S\cong\tilde{S}\subset\PP^5$ is $\Oo_{\FF_2}({h+3f})$. 
Hence  there is  $v\in \Aut( \PP H^0(\FF_2, \Oo_{\FF_2}({h+3f})))=\Aut(\PP^5)$ such that $v(S)=\tilde{S}$ and $v_{|S} =u$.
Let $S\cong\tilde{S}\cong\FF_0\cong\PP^1\times\PP^1$. Assume that  $u\in \Aut(\PP^1)\times \Aut(\PP^1)$. The only very ample line bundle on $\FF_0$ inducing an embedding 
onto a minimal degree surface in $\PP^5$ is $\Oo_{\PP^1\times\PP^1}(1,2)$, hence the conclusion follows.
\end{remark}
\vspace{-8pt}
\section{Curves of genus $g\le 13$ and degree $d=15$ in $\PP^5$}
We denote by $\HL{d,g,r}$ the subscheme of $\h{H}_{d,g,r}$ consisting of components of $\h{H}_{d,g,r}$ whose general element is linearly normal. We set $\alpha :=g-d+r=g-10$ for $(d,r)=(15,5)$. 
For a general element $X$ in a component of $\HL{15,g,5}$,
$\alpha$ is the index of speciality of $|\Oo_X(1)|$.
the complete hyperplane series $\h{D}$ of $X\subset\PP^5$. 
For a possible component $\h{H}\subset\HO{15,g,5}$ which may  not be a component of  $\HL{15,g,5}$,
we set $\beta:=g-15+\dim |\h{D}| >g-10=\alpha$, where $\h{D}$ is the (incomplete) hyperplane series of a general $X\in\h{H}$.

In this section we study $\HO{15,g,5}$ for genus $g\le 13$, which is relatively simple to handle. 
We recall the following concerning an upper bound of the dimension of a birationally very ample linear series on curves, which enables us to simplify some of our computation. 
\begin{remark}\label{bound}
By \cite[p.75]{he}, the largest possible dimension of a birationally very ample linear series of degree $d\ge g$ on a curve of genus $g$ is $\frac{2d-g+1}{3}$.
\end{remark}
\begin{proposition}\label{g=13} $\HO{15,13,5}$ is reducible with two components $\h{H}_1$ and $\h{H}_2$.
\begin{itemize}
\item[(i)] $\h{H}_1=\HL{15,13,5}$ and $\dim \HL{15,13,5}=\h{X}(d,g,r)=66$ with a $7$-gonal general element.
\item[(ii)] $\dim \h{H}_2=68>\h{X}(d,g,r),$ a general $X\in\h{H}_2$ is trigonal consisting of the image of external projection of extremal curves of degree $15$  in $\PP^6$.
\end{itemize}
\end{proposition}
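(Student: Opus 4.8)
The plan is to split $\HO{15,13,5}$ according to linear normality and to handle the two pieces by entirely different methods. Since $\HL{15,13,5}$ is by definition a union of components, it suffices to (a) prove $\HL{15,13,5}$ is irreducible of dimension $66$, which will be $\h{H}_1$; (b) show that every non-linearly-normal component is the closure of the locus of external projections of extremal curves of $\PP^6$, which will be $\h{H}_2$; and (c) check that this latter locus is irreducible of dimension $68$ and that neither of the two is contained in the other.

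For $\h{H}_1$ I would use residuation. A general $X\in\HL{15,13,5}$ has $h^0(\Oo_X(1))=6$ and $h^1(\Oo_X(1))=\alpha=3$, so the residual series $|K_X-\Oo_X(1)|$ is a $g^2_9$, and conversely $X$ together with this $g^2_9$ recovers $\Oo_X(1)=K_X-g^2_9$. For a general pair the $g^2_9$ is base-point-free and birationally very ample: a compounded $g^2_9$ factors as a triple covering $X\to C'$ with $C'$ of genus $\le 1$ followed by a degree-$3$ plane model of $C'$, and such $X$ sweep out families of dimension $<31$, while every component of $\HL{15,13,5}$ has dimension $\ge\h{X}(15,13,5)=66$; hence the compounded (and base-point) loci cannot be components. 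Thus the generic member of every component has a plane model of degree $9$ and geometric genus $13$, i.e. with $\binom{8}{2}-13=15$ nodes. The Severi variety of such curves is irreducible of dimension $\binom{11}{2}-1-15=39$; quotienting by $\Aut(\PP^2)$ gives an irreducible family of pairs $(X,g^2_9)$ of dimension $39-8=31=\lambda(15,13,5)$, and, since the fibres of the passage to $\Gg^2_9$ are projective frames, adding $\dim\Aut(\PP^5)=35$ yields $\dim\h{H}_1=66$ and irreducibility. One still needs that for the general pair $K_X-g^2_9$ is very ample, embedding $X$ as a smooth linearly normal curve in $\PP^5$; this is a general-position statement. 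Finally, projecting the degree-$9$ plane model from one of its nodes gives a base-point-free $g^1_7$, so the gonality is at most $7$.

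For $\h{H}_2$ I would bound the speciality of the hyperplane bundle of a non-linearly-normal $X$. Since $\pi(15,7)=10<13$, there is no curve of degree $15$ and genus $13$ in $\PP^7$, so $h^0(\Oo_X(1))=7$ and $X$ is an external projection (an internal one would drop the degree) of a curve $Y\subset\PP^6$ of degree $15$ and genus $13$. Because $\pi(15,6)=13$, the curve $Y$ is extremal, hence lies on a surface of minimal degree $5$; such surfaces form the closure of the irreducible family $\h{S}(6)$ of rational normal scrolls (cones being degenerate scrolls), and on a scroll \eqref{sd} forces $Y\in|3H+0\cdot L|$, whence $Y\cdot L=3$ produces a trigonal pencil, so the general member of $\h{H}_2$ is trigonal. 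For the dimension, by \eqref{sf} the scrolls move in dimension $42$ and by \eqref{sdn} the system $|3H|$ has dimension $33$, so the extremal curves of $\PP^6$ form an irreducible family of dimension $42+33=75$; the pairs (curve, complete $g^6_{15}$) then move in dimension $75-\dim\Aut(\PP^6)=75-48=27$, the projection centre (a point of the dual $\PP^6$) contributes $6$, and re-embedding in $\PP^5$ contributes $\dim\Aut(\PP^5)=35$, giving $\dim\h{H}_2=27+6+35=68$. Irreducibility is inherited from $\h{S}(6)$, the system $|3H|$, and the choice of centre.

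It remains to assemble the two pieces. Both loci are irreducible, they exhaust $\HO{15,13,5}$ by the dichotomy on linear normality, and $\dim\h{H}_2=68>66=\dim\h{H}_1$ together with the non-linear-normality of a general member of $\h{H}_2$ shows that $\h{H}_2\not\subset\h{H}_1$ and $\h{H}_1\not\subset\h{H}_2$; hence these are exactly the two components. The main obstacle I anticipate is the sharp gonality statement in (i): exhibiting the $g^1_7$ is immediate from the nodal plane model, but ruling out a $g^1_6$ on the general member cannot come from a naive Brill--Noether count (these curves are Brill--Noether special, $\rho(6,13,1)=-3$) and requires genuine control of the special linear series of a general nodal plane curve of degree $9$. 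A secondary technical point is the very ampleness of $K_X-g^2_9$ needed to guarantee that the residual construction really lands as a smooth curve in $\PP^5$.
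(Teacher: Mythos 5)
Your strategy is essentially the paper's: split $\HO{15,13,5}$ by linear normality, identify the linearly normal locus with the Severi variety $\Sigma_{9,13}$ via residuation, and realize the remaining locus as external projections of extremal curves of degree $15$ in $\PP^6$. The paper outsources the first part to \cite[Theorem 3.7]{lengthy} and \cite[Theorem 2.5]{JPAA} (your sketch in effect re-derives those results), and it parametrizes $\h{H}_2$ as an $\Aut(\PP^5)$-bundle over a family $\h{F}\subset\Gg^5_{15}$ fibered over $\h{M}^1_{13,3}$ with fibre $\mathbb{G}(5,6)$, giving $27+6+35=68$; your count via (scroll, curve in $|3H|$, projection centre) is the same number arranged differently, since every extremal curve of degree $15$ in $\PP^6$ is trigonal with $g^6_{15}=|K-3g^1_3|$ determined by the unique $g^1_3$. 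Your use of semicontinuity of $h^0(\Oo_X(1))$ in place of the paper's semicontinuity of gonality to separate the two components is equally valid.

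The genuine gaps are these. (1) The $7$-gonality in (i) is part of the statement, and your argument only gives gonality $\le 7$; as you yourself note, no Brill--Noether count can rule out a $g^1_6$, since these curves are Brill--Noether special. The missing ingredient is Coppens' theorem \cite{Coppens0}: the normalization of a general nodal plane curve of degree $d$ (here $d=9$ with $\delta=15$ nodes) has gonality exactly $d-2$, and all gonality pencils are cut out by lines through a node. Without this input, (i) is not proved. (2) The very ampleness of $|K_X-g^2_9|$ for a general member of $\Sigma_{9,13}$ is not a soft general-position fact; it is the theorem of d'Almeida and Hirschowitz, cited in the paper as \cite[Theorem 0]{Coppens}, and it is exactly what legitimizes the generically one-to-one correspondence $\HL{15,13,5}/\Aut(\PP^5)\stackrel{bir}{\cong}\Sigma_{9,13}/\Aut(\PP^2)$, hence both the irreducibility and the dimension $66$ of $\h{H}_1$. (3) Smaller: the parenthesis ``cones being degenerate scrolls'' does not dispose of extremal curves lying on a cone over a rational normal quintic. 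The paper treats this case via \eqref{conevertex1} of Remark \ref{cone1}(a): smoothness forces $15=5k+m$ with $m\in\{0,1\}$, so $k=3$, $m=0$, the curve misses the vertex and is again trigonal; alternatively one must bound the dimension of the cone locus to see it cannot produce a component or escape the closure of the scroll locus. With (1)--(3) supplied, your argument closes and matches the paper's.
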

\begin{proof}
We denote by $\Sigma_{d,g}$ the {\it irreducible} Severi variety of plane curves of degree $d$ and genus $g$; cf.\cite {H2}. By \cite[Theorem 3.7]{lengthy} \& \cite[Theorem 2.5]{JPAA}, $\HL{15,13,5}\neq\emptyset$ and is irreducible of dimension $\Xx(15,13,5)$. For a general $X\in\HL{15,13,5}$, $|K_X(-1)|=g^2_9$ is birationally very ample, base-point-free and hence $X$  has a plane model of degree $9$ which follows from the proof of \cite[Theorem 2.5]{JPAA}. Therefore $X\in\HL{15,13,5}$ corresponds to an element of $\Sigma_{9,13}$. 

Conversely, the residual series of the series cut out by lines in $\PP^2$ on (the non-singular model of) a general member in $\Sigma_{9,13}$ is a very ample $g^5_{15}$ by a result of  d'Almeida and Hirschowitz \cite[Theorem 0]{Coppens}; cf. \cite[pp 13--15]{lengthy} for details in a similar situation.
Therefore we have a generically one-to-one correspondence $$\HL{15,13,5}/\Aut(\PP^5)\stackrel{bir}{\cong} \Sigma_{9,13}/\Aut(\PP^2)$$
via residualization. 
Recall that a general member in $\Sigma_{9,13}$ is a plane curve of degree $9$ with $\delta=15$ nodes as its only singularities.  By a theorem of M. Coppens \cite[Theorem]{Coppens0}, $X$ is $7$-gonal with gonality pencils cut out by lines through a node on the plane model of degree $9$. 

Suppose there is a component $\h{H}$ 
other than $\HL{15,14,5}$. A general $X\in\h{H}$ is not linearly normal, $\beta =4$ by Remark \ref{bound} and we have
$|\h{D}|=g^6_{15}$ where $\Dd$ is the incomplete hyperplane series of $X\subset\PP^5$.
Since $\pi (15,6)=13=g$, the curve $\w{X}\subset\PP^6$ induced by the complete $|\h{D}|=g^6_{15}$ is an extremal curve lying on a quintic surface $S\subset\PP^6$. If $S$ is a smooth rational normal scroll, by solving \eqref{sd},  we get $\w{X}\in |3H|$, $|K_S+\w{X}-H|=|3L|$ and $X\cong\w{X}$ is trigonal with the trigonal pencil cut out by the ruling $|L|$. If $S$ is a cone over a rational normal curve in $\PP^5$, by \eqref{conevertex1} in Remark \ref{cone1} (a), we again may conclude that $X$ is trigonal. Conversely, on a trigonal curve $X$ of genus $g=13$, $|K_X-3g^1_3|=g^6_{15}$ is very ample, hence the moduli map $\h{H} \rightarrow \h{M}^1_{g,3}$ is dominant. The incomplete very ample linear series $g^5_{15}$ which is a codimension 
one subspace of the complete $|K_X-3g^1_3|$ over the family of trigonal curves $\h{M}^1_{g,3}$ form an irreducible family $\h{F}\subset\Gg^5_{15}$ of dimension
$$\dim\h{M}^1_{g,3}+\dim\mathbb{G}(5,6)=33>3g-3+\rho(d,g,r)=31.$$
Therefore the $\Aut(\PP^5)$-bundle over $\h{F}$ may contribute to an extra component $\h{H}_2$ other than $\HL{d,g,r}$. 
By lower semicontinuity of gonality, $\h{H}_1$ is not in the boundary of $\h{H}_2$. Hence $\h{H}_1$ and $\h{H}_2$
are the only two distinct components of $\HO{15,13,5}$ by exhaustion.
\end{proof}
The following lemma is easy to prove and useful when we deal with double coverings of  curves of  small genus which may be induced by the compounded residual series $|K_X(-1)|$, $X\in \HO{d,g,r}$; Proposition \ref{main12} and Theorem \ref{main15}.

\begin{lemma} \label{bih}Let $X\stackrel{\eta}{\rightarrow} E$ be a double covering of a curve $E$ of genus $h\ge 1$. Let $\h{E}=g^s_{e}$ be a non-special linear series on $E$. Assume that $|\eta^*(g^s_{e})|=g^s_{2e}$. Then the base-point-free part of the complete  
$|K_X(-\eta^*(g^s_{e}))|$ is compounded.
\end{lemma}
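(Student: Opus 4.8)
The plan is to use the sheet-exchanging involution $\sigma$ of the double cover $\eta$ (so that $E=X/\sigma$) and to prove that every global section of the line bundle underlying $K_X(-\eta^*(g^s_e))$ is $\sigma$-invariant, hence descends to $E$. Once this is established the induced morphism factors through $\eta$, a covering of degree $2$, and so the base-point-free part is forced to be compounded.

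First I would record the standard double-cover data. Write $\eta_*\Oo_X=\Oo_E\oplus L^{-1}$, where $L$ is a line bundle on $E$ with $L^{\otimes 2}=\Oo_E(B)$ and $B$ the branch divisor. For the ramification divisor $R$ one has the standard identification $\Oo_X(R)\cong\eta^*L$, whence
\[
K_X\;=\;\eta^*K_E\otimes\Oo_X(R)\;=\;\eta^*\!\left(K_E\otimes L\right).
\]
Denoting by $M$ the line bundle underlying the $g^s_e$, this yields
\[
K_X\bigl(-\eta^*(g^s_e)\bigr)\;=\;\eta^*\!\left(K_E\otimes L\otimes M^{-1}\right)\;=:\;\eta^*N .
\]

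The heart of the argument is the eigenspace decomposition of $H^0(X,\eta^*N)$ for the $\sigma^*$-action, which by the projection formula reads
\[
H^0(X,\eta^*N)\;=\;H^0(E,N)\,\oplus\,H^0(E,N\otimes L^{-1}),
\]
the first summand consisting exactly of the sections pulled back from $E$ and the second of the anti-invariant sections. Here $N\otimes L^{-1}=K_E\otimes M^{-1}$, so by Serre duality on $E$ its space of sections is dual to $H^1(E,M)$, which vanishes precisely because $g^s_e$ is \emph{non-special}. This vanishing---the one genuine input, and the step I expect to carry all the weight---forces $H^0(X,\eta^*N)=H^0(E,N)$, i.e. every section of $\eta^*N$ is of the form $\eta^*\bar{s}$ with $\bar{s}\in H^0(E,N)$. (The hypothesis $|\eta^*(g^s_e)|=g^s_{2e}$ serves to keep the set-up consistent and to fix the degree of the series; it is the non-speciality that actually produces the factorization.)

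Finally I would translate this into compoundedness. Since all sections are pulled back from $E$, the base locus of $|\eta^*N|$ equals $\eta^*Z$, where $Z$ is the base locus of $|N|$, and the morphism attached to the moving part is the composite of $\eta$ with the morphism on $E$ defined by the moving part of $|N|$; in particular it factors as $X\xrightarrow{\eta}E\longrightarrow C'$. As $\eta$ has degree $2$, the resulting map from $X$ onto its image curve $C'$ is a non-trivial covering of degree $\ge 2$, so the base-point-free part of $|K_X(-\eta^*(g^s_e))|$ is compounded. The only delicate point is that passing to the base-point-free part preserves the factorization through $\eta$, which is immediate since the removed base locus $\eta^*Z$ is itself pulled back from $E$.
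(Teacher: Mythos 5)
Your proof is correct, and it takes a genuinely different route from the paper's. The paper argues with divisors and Riemann--Roch: since $g^s_e$ is non-special, $|g^s_e+p|$ is a $g^{s+1}_{e+1}$ for every $p\in E$, so $\dim|\eta^*(g^s_e)+\eta^*(p)|\ge s+1$, and by the hypothesis $|\eta^*(g^s_e)|=g^s_{2e}$ this is a jump of at least one over $\dim|\eta^*(g^s_e)|$; hence the degree-two divisor $\eta^*(p)$ fails to impose independent conditions on the residual series $|K_X(-\eta^*(g^s_e))|$, so its base-point-free part cannot separate the two points of a general fiber of $\eta$ and is therefore compounded. You instead prove the stronger statement that the whole complete series descends: writing $\eta_*\Oo_X=\Oo_E\oplus L^{-1}$ and $K_X=\eta^*(K_E\otimes L)$, the residual bundle is the pullback $\eta^*(K_E\otimes L\otimes M^{-1})$, and the projection formula plus Serre duality (the anti-invariant summand is $H^0(E,K_E\otimes M^{-1})\cong H^1(E,M)^\vee=0$ by non-speciality) give $|K_X(-\eta^*(g^s_e))|=\eta^*|K_E\otimes L\otimes M^{-1}|$, whence the factorization through $\eta$. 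Note that, as you half-suspected in your parenthetical, your argument never uses the hypothesis $|\eta^*(g^s_e)|=g^s_{2e}$ at all: in your notation that hypothesis amounts to completeness of the $g^s_e$ together with $h^0(E,M\otimes L^{-1})=0$, neither of which enters your computation, so you actually prove a slightly more general lemma, and your eigenspace decomposition explains the hypothesis rather than assumes it. What the paper's route buys is economy of means: three lines of divisor arithmetic, with no appeal to the structure theory of double covers. Both arguments leave implicit the same harmless caveat, namely that the residual series should be positive-dimensional for ``compounded'' to have content, which holds in the applications (Proposition \ref{main12} and Theorem \ref{main15}).
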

\begin{proof} Note that for any $p\in E$,  $|g^s_{e}+p|=g^{s+1}_{e+1}$ since $g^s_{e}$ is non-special and
 $$\dim|\eta^*(g^s_{e})+\eta^*(p))|=\dim|\eta^*(g^s_{e}+p)|=\dim|\eta^*(g^{s+1}_{e+1})|\ge \dim\h{E} +1.$$
 Hence $|K_X(-\eta^*(g^s_{e}))|$  is compounded. \end{proof}

\begin{proposition}\label{main12} $\HO{15,12,5}=\HL{15,12,5}$ is irreducible of the expected dimension and a general $X\in\HO{15,12,5}$ is $7$-gonal.
\end{proposition}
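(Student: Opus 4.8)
The plan is to reuse the residualization strategy of Proposition~\ref{g=13}, now in the Brill--Noether boundary case $\rho(15,12,5)=0$. For a linearly normal $X$ one has $\alpha=2$, so the residual series $|K_X(-1)|$ has degree $2g-2-15=7$ and $h^0=h^1(\Oo_X(1))=2$, i.e. it is a $g^1_7$; conversely $|K_X-g^1_7|=g^5_{15}$, so residuation identifies linearly normal curves with pencils $g^1_7$. Just as in Proposition~\ref{g=13} I would invoke \cite[Theorem 3.7]{lengthy} and \cite[Theorem 2.5]{JPAA} to obtain that $\HL{15,12,5}$ is nonempty and irreducible of the expected dimension $\Xx(15,12,5)=3g-3+\dim\Aut(\PP^5)=33+35=68$ (here $\rho=0$, so $\lambda=3g-3$). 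Very ampleness of $|K_X-g^1_7|$ on a general such curve reduces to checking that $g^1_7$ is not contained in a $g^2_9$, which holds since $\rho(9,12,2)=-3<0$ forbids a $g^2_9$ on a general curve.

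Because $\rho=0$, a general genus-$12$ curve carries only finitely many $g^1_7$, so the generic fibre of the moduli map $\HL{15,12,5}\to\Mm_{12}$ has dimension $\dim\Aut(\PP^5)=35$; from $68=33+35$ this map is dominant. Hence a general $X\in\HL{15,12,5}$ is a general curve of genus $12$ and therefore has gonality $\lceil (g+2)/2\rceil=7$, while the residual $g^1_7$ already shows the gonality is at most $7$; thus the general element is exactly $7$-gonal.

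It remains to rule out any component $\Hh$ whose general member $X$ is not linearly normal. Then the complete series $|\Oo_X(1)|$ has dimension $\ge 6$; since it contains the very ample hyperplane subseries it is itself very ample, hence birationally very ample, and Remark~\ref{bound} (with $d=15\ge g$) forces $\dim|\Oo_X(1)|\le\lfloor (2d-g+1)/3\rfloor=6$. Thus $|\Oo_X(1)|=g^6_{15}$ and $|K_X(-1)|=g^2_7$ (degree $7$, $h^0=h^1(\Oo_X(1))=3$). As $7$ is prime this $g^2_7$ cannot be compounded, so it is birationally very ample and cuts out a plane septic with $\binom{6}{2}-12=3$ nodes; the underlying curves therefore sweep out the irreducible Severi variety $\Sigma_{7,12}$ (cf. \cite{H2}), and the pairs $(C,g^2_7)$ form a family $\Gg^2_7$ of dimension $3g-3+\rho(7,12,2)=24$. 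A general $X\in\Hh$ is a projection of its linearly normal model $\w{X}\subset\PP^6$ cut out by $|K_X-g^2_7|=g^6_{15}$, so $\Hh$ maps to the locus in $\Gg^5_{15}$ of codimension-one subseries $\mathcal{D}\subset|K_X-g^2_7|$ (a $6$-dimensional choice) with fibre $\Aut(\PP^5)$, whence
\[
\dim\Hh=\dim\Gg^2_7+6+\dim\Aut(\PP^5)=24+6+35=65<68=\Xx(15,12,5).
\]
This contradicts the universal bound $\dim\Hh\ge\Xx(15,12,5)$ from the setup, so no such component exists and $\HO{15,12,5}=\HL{15,12,5}$.

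The main obstacle is precisely the non-linearly-normal estimate: the argument succeeds only because $\Gg^2_7$ has dimension exactly $24$, which in turn rests on every $g^2_7$ being birationally very ample (via primality of $7$) and on $\Sigma_{7,12}$ having the expected dimension. Granting these together with the cited existence-and-irreducibility input for the linearly normal stratum, the remainder is routine dimension bookkeeping.
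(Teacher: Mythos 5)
Your overall strategy is the paper's own: take the irreducibility of $\HL{15,12,5}$ as input, and kill any would-be non-linearly-normal component via the residual $g^2_7$ and the Severi variety $\Sigma_{7,12}$, getting a family of dimension $24+6+35=65<68=\h{X}(15,12,5)$ (the paper runs the identical count one level down inside $\Gg^5_{15}$, as $24+6=30<33=\lambda(15,12,5)$). But there is a genuine gap at the step ``as $7$ is prime this $g^2_7$ cannot be compounded.'' Primality of the degree only rules out a \emph{base-point-free} compounded $g^2_7$: a compounded complete linear series may have base points, and then it is the degree of the moving part, not $7$, that must factor through the covering. Concretely, $\h{E}=|K_X-\h{D}|$ could be $g^2_6+\Delta$ with $\deg \Delta =1$, where either $g^2_6=\eta^*(g^2_3)$ for a double cover $\eta: X\to E$ of a cubic (so $X$ bielliptic, or hyperelliptic if the cubic is rational), or $g^2_6=2g^1_3$ ($X$ trigonal); or $\h{E}=g^2_4+\Delta$ with $\deg\Delta=3$ ($X$ hyperelliptic). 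Your dimension count treats only the birationally very ample case, so all of these are unaddressed, and this is precisely where the paper does its real work: primality forces a nonempty base locus, and the bielliptic configuration is then excluded by Lemma \ref{bih} (the pullback of $|g^s_e+p|$ shows $|K_X-\eta^*(g^s_e)|$ has compounded base-point-free part, contradicting the very ample hyperplane series it contains); the hyperelliptic cases are impossible because $\Oo_X(1)$ is a special very ample series, and the trigonal case falls to Maroni-type arguments as in Claim 2 of Theorem \ref{main15}. Without some such argument your proof does not close.

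Two smaller repairs. First, in the birationally very ample case you must also check that $\h{E}$ is base-point-free before asserting a plane septic model (the paper's one-line argument: a base point would give a plane sextic model, impossible since then $g\le \binom{5}{2}=10<12$); your Severi-variety count needs this. Second, your dominance argument (``the generic fibre has dimension $35$, hence the moduli map is dominant'') presupposes that the general curve in the image has only finitely many $g^1_7$'s, which is not known at that point; the paper avoids this circularity by citing \cite[Theorem, p. 69-70]{he}: since $\rho(15,12,5)=0$ there is a unique component of $\HO{15,12,5}$ dominating $\Mm_{12}$, namely $\HL{15,12,5}$, and the Brill--Noether theorem then gives $7$-gonality of its general element. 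These two points are repairable with short arguments; the compounded case is the real hole.
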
 
\begin{proof} 
By \cite[Theorem 2.3]{JPAA},
$\HL{15,12,5}$ is irreducible. Suppose there is a component $\h{H}\neq\HL{15,12,5}$. For a general $X\in\h{H}$ with an incomplete hyperplane series $\h{D}$,  $\beta=g-15+\dim |\h{D}|\ge 3$ and hence $\beta =3$ by Remark \ref{bound}. 

If $\h{E}=|K_X-\h{D}|=g^{\beta-1}_7=g^2_7$ is compounded, $\h{E}$ has non-empty base locus $\Delta$ with $\deg\Delta=1$.  So $X$ is bielliptic which is impossible by Lemma \ref{bih}.

If $\h{E}$ is birationally very ample, $\h{E}$ is base-point-free since $g=12> \binom{6-1}{2}$ and hence $X$ has a plane model of degree $7$. Consider the Severi variety $\Sigma_{7,12}$ of plane septics of genus $g=12$ whose general member is a curve with $\delta =3$ nodes. 
Since
$$\dim\Sigma_{7,12}=\dim|\Oo_{\PP^2}(7)|-\delta=3\cdot 7+g-1=32,$$ the family $\h{F}\subset\h{G}^2_7$ consisting 
of base-point-free birationally very ample nets of degree $7$ on moving curves has dimension  $$\dim\h{F}=\dim\Sigma_{7,12}-\dim\Aut{(\PP^2)}=24.$$
Hence the residual family $\h{F}^\vee:=\{|K_X-\h{E}|; \h{E}\in\h{F}\}\subset\h{G}^6_{15}$ has dimension $24$ and 
the family $\w{\h{F}}$ consisting  of incomplete very ample $g^5_{15}$'s arising this way has dimension 
$$\dim\h{F}^\vee+\dim\mathbb{G}(5,6)=30< 3g-3+\rho(15,12,5)=33.$$
Thus $\w{\h{F}}$ does not constitute a full component.

Note that $\alpha=2$, $\rho (d,g,5)=g-6\alpha=0$ and hence there is a unique component
of $\HO{d,g,r}$ dominating $\h{M}_g$ (\cite[Theorem, p. 69-70]{he}), which is $\HL{d,g,r}.$ 
A general curve of genus $g=12$ is $7$-gonal by the Brill-Noether theorem. 
\end{proof}

\begin{proposition}\label{main11} $\HO{15,11,5}=\HL{15,11,5}$ is irreducible of the expected dimension and 
a general $X$ is $7$-gonal. 
\end{proposition}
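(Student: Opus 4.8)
\textbf{Proof strategy for Proposition \ref{main11}.}

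The plan is to follow the same template that succeeded for $g=12$ in Proposition \ref{main12}, adapting the numerics to $g=11$. First I would invoke the known irreducibility of the linearly normal locus: by the analogue of \cite[Theorem 2.3]{JPAA} (or the relevant existence/irreducibility result for $\HL{15,11,5}$), the scheme $\HL{15,11,5}$ is nonempty and irreducible of the expected dimension $\Xx(15,11,5)=\lambda(15,11,5)+\dim\Aut(\PP^5)$. Here $\alpha=g-d+r=1$, so for a general linearly normal $X$ the residual series $|K_X(-1)|$ is the $g^{\alpha-1}_{2g-2-d}=g^0_5$, i.e.\ $|K_X(-1)|$ is a single effective divisor of degree $5$; the hyperplane series is then essentially complete and the curve is projectively well-understood. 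The Brill--Noether number is $\rho(15,11,5)=g-(r+1)\alpha=11-6=5\ge 0$, so by \cite[Theorem, p. 69--70]{he} there is a unique component of $\HO{15,11,5}$ dominating $\Mm_g$, namely $\HL{15,11,5}$; a general curve of genus $11$ is $7$-gonal by Brill--Noether, giving the stated gonality of the general element.

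The substance of the argument is then to rule out any extra (non-linearly-normal) component $\h{H}\ne\HL{15,11,5}$. For such a component, a general $X\in\h{H}$ has an incomplete hyperplane series $\h{D}$ with $\beta=g-15+\dim|\h{D}|>\alpha=1$, and Remark \ref{bound} bounds $\beta$ from above: the residual series $\h{E}=|K_X-\h{D}|=g^{\beta-1}_{2g-2-15}=g^{\beta-1}_{5}$ must be a birationally very ample or compounded pencil/net of degree $5$ on a genus-$11$ curve, forcing $\beta$ to be small. Since a birationally very ample $g^{\beta-1}_5$ of degree $5=d'$ on a curve of genus $11$ is heavily constrained (by Castelnuovo's bound $\deg\le g$ fails badly unless the dimension is very low), I expect only $\beta=2$ survives, so $\h{E}=g^1_5$. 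I would then split into the compounded and birationally-very-ample cases exactly as in Proposition \ref{main12}: if $\h{E}$ is compounded, Lemma \ref{bih} applied to the induced covering produces a contradiction (the putative cover of too-small genus cannot occur); if $\h{E}$ is birationally very ample it is base-point-free (the genus exceeds the relevant binomial threshold) and would give a low-degree plane or space model, whose Severi/moduli count I would carry out.

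The decisive step, and the one I expect to be the main obstacle, is the dimension count that shows the family $\w{\h{F}}$ of incomplete very ample $g^5_{15}$'s arising from the residual construction fails to fill a component. Concretely I would compute $\dim\h{F}$ for the relevant family of degree-$5$ linear series on moving genus-$11$ curves (via the appropriate Severi variety or Brill--Noether locus of pencils), pass to the residual family $\h{F}^\vee=\{|K_X-\h{E}|\}\subset\h{G}^5_{15}$ of the same dimension, and then add $\dim\mathbb{G}(5,\dim|\h{D}|)$ for the choice of the incomplete subseries; the goal is to verify that this total is strictly less than $\lambda(15,11,5)=3g-3+\rho(15,11,5)=30+5=35$, so that no full component of the expected dimension can arise and $\w{\h{F}}$ lies in the closure of $\HL{15,11,5}$. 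The delicate point is getting the parameter count right for the low-degree residual series at this genus — in particular confirming that $\h{E}=g^1_5$ (a pencil) rather than a higher-dimensional series is the only possibility, since an erroneous value of $\beta$ would change every subsequent estimate. Once the count comes out below $\lambda$, irreducibility follows by exhaustion exactly as in the $g=12$ case.
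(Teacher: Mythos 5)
Your proposal follows the paper's proof essentially verbatim: invoke the JPAA irreducibility result for $\HL{15,11,5}$ (the paper cites Theorem 2.2 there, not 2.3), force $\beta=2$ via Remark \ref{bound} so that $|K_X-\h{D}|=g^1_5$ and $|\h{D}|=g^6_{15}$, kill any putative non-linearly-normal component by the count $\dim\h{M}^1_{11,5}+\dim\mathbb{G}(5,6)=27+6=33<35=\lambda(15,11,5)$, and get $7$-gonality of the general element from Brill--Noether theory since $\rho(15,11,5)=5>0$. The only superfluous element is the compounded/birationally-very-ample dichotomy you import from the $g=12$ case: since the residual series here is a pencil rather than a net, that split is vacuous (birational very ampleness requires $r\ge 2$, and Lemma \ref{bih} is not needed), and the paper proceeds directly to the dimension count, which absorbs all cases at once via the ``at most'' bound $\dim\h{M}^1_{g,5}+\dim\mathbb{G}(5,6)$.
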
 
\begin{proof} We have $\alpha=1$.
By \cite[Theorem 2.2]{JPAA},
$\HL{15,11,5}$ is irreducible of the expected dimension. Suppose there is a component $\h{H}$ 
other than $\HL{15,11,5}$. A general $X\in\h{H}$ is not linearly normal,  $\beta\ge 2$ and hence $\beta =2$ by Remark \ref{bound}.  Thus we have $|K_X-\h{D}|=g^1_5$ and $|\h{D}|=g^6_{15}$. A general element of $\h{H}$ is induced by $6=\dim\mathbb{G}(5,6)$ dimensional 
subseries of a complete $g^6_{15}=|K_X-g^1_5|$. Such  family of incomplete 
$g^5_{15}$'s forms an irreducible family of dimension at most
$$\dim\h{M}^1_{g,5}+\dim\mathbb{G}(5,6)=33<3g-3+\rho(15,11,5)=35, $$
hence such family does not contribute to a full component of $\HO{15,11,5}$.

Since $\rho (d,g,5)>0$, the unique component $\HL{d,g,r}$ of $\HO{d,g,r}$ dominates $\Mm_g$
whose general curve member is $7=[\frac{g+3}{2}]$-gonal. 
\end{proof}

\begin{proposition}\label{10less}For $g\le 10$, $\HO{15,g,5}$ is irreducible of the expected dimension and a general element $X$ 
is $[\frac{g+3}{2}]$-gonal.
\end{proposition}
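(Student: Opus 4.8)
The plan is to use that $\rho(15,g,5)=60-5g\ge 10>0$ throughout this range, so that there is a distinguished Brill--Noether component dominating $\Mm_g$, and then to prove by a dimension count that it is the only one. Following the bookkeeping already used for $g=11,12$, I stratify $\HO{15,g,5}$ according to the speciality $s:=h^1(X,\Oo_X(1))=\beta$ of the hyperplane bundle of a general member $X$. Riemann--Roch gives $h^0(\Oo_X(1))=16-g+s$, so the complete hyperplane series is a $g^{15-g+s}_{15}$ and $X$ is the image of a linearly normal curve $\wt X\subset\PP^{15-g+s}$ under an external projection to $\PP^5$.

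First I treat the non-special stratum $s=0$. Then $\wt X$ is embedded by a complete non-special $g^{15-g}_{15}$; since $\rho(15,g,15-g)=g$, these bundles fill up a dense open subset of the relative Picard variety $\Pic^{15}$ over $\Mm_g$, and the general one is very ample by Brill--Noether theory. The space of such pairs $(C,L)$ is irreducible of dimension $3g-3+g$. Adjoining the projection center, a point of $\mathbb{G}(5,15-g)$, and using that a general projection into $\PP^5$ is an embedding (as $5\ge 3$), the resulting family of incomplete $g^5_{15}$'s is irreducible of dimension $3g-3+g+6(10-g)=57-2g=\lambda(15,g,5)$; the associated $\Aut(\PP^5)$-bundle is one irreducible component of dimension $\h{X}(15,g,5)=92-2g$. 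It dominates $\Mm_g$, so a general member is a general curve of genus $g$, of gonality $[\frac{g+3}{2}]$ by the Brill--Noether theorem.

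Next I rule out components with $s\ge 1$. Applying Remark \ref{bound} to the birationally very ample complete hyperplane series gives $15-g+s\le\frac{31-g}{3}$, hence $s=0$ for $g\le 8$, $s\le 1$ for $g=9$ and $s\le 2$ for $g=10$, so only finitely many strata remain. For fixed $s\ge 1$ the analogous count gives dimension $\dim\Gg^{15-g+s}_{15}+\dim\mathbb{G}(5,15-g+s)$, and with $\dim\Gg^{15-g+s}_{15}\le 3g-3+\rho(15,g,15-g+s)$ and $\rho(15,g,15-g+s)=g-s(16-g+s)$ one obtains total dimension at most $\lambda(15,g,5)-s(10+s-g)<\lambda(15,g,5)$, the strict inequality holding because $10+s-g\ge 1$ for $g\le 10$, $s\ge 1$. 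As every component of $\HO{15,g,5}$ has dimension at least $\h{X}(15,g,5)$, such a stratum cannot be a component and lies in the closure of the $s=0$ component. Hence $\HO{15,g,5}$ is irreducible of the expected dimension.

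The delicate step is the inequality $\dim\Gg^{15-g+s}_{15}\le 3g-3+\rho$, which is automatic only when $\rho\ge 0$ and can genuinely fail when $\rho<0$. Among the surviving strata the single case with $\rho<0$ is $(g,s)=(10,2)$, where $\rho(15,10,7)=-6$. There $|K_X-\Oo_X(1)|$ has degree $3$ and dimension $1$, so $\wt X$ is trigonal and its $g^7_{15}=|K_X-g^1_3|$ is forced by the unique trigonal pencil; the relevant family is thus exactly the trigonal locus $\h{M}^1_{10,3}$, of dimension $2g+1=21=3g-3+\rho$, so the bound survives after all. Pinning down this extremal stratum as an explicit, smaller Brill--Noether locus is the one place where the argument must be carried out by hand.
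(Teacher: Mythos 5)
Your overall strategy---stratify by the speciality $s=h^1(\Oo_X(1))$ of the hyperplane bundle, show the non-special stratum is irreducible of dimension $\h{X}(15,g,5)$ and dominates $\Mm_g$, and kill the special strata by a dimension count---is sound, and all of your bookkeeping is correct: $\rho(15,g,15-g+s)=g-s(16-g+s)$, the stratum bound $\lambda(15,g,5)-s(10+s-g)$, and the list of surviving special strata $(g,s)=(9,1),(10,1),(10,2)$. It is also a genuinely different route from the paper, whose entire proof of Proposition \ref{10less} is a citation of \cite[p.75]{he} for $g\le 9$ and \cite{PAMS} for $g=10$. However, there is one genuine gap: the claim that $\dim\Gg^{15-g+s}_{15}\le 3g-3+\rho$ is ``automatic'' whenever $\rho\ge 0$. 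No such theorem exists. What is automatic is the \emph{lower} bound $\dim\Gg\ge 3g-3+\rho$ for every component; the upper bound follows from the Brill--Noether dimension theorem only for families that \emph{dominate} $\Mm_g$, where the general fiber is $G^{r'}_{15}$ of a Brill--Noether general curve. An irreducible piece of a special stratum supported over a proper subvariety of $\Mm_g$ is covered by no general result, regardless of the sign of $\rho$: indeed, ruling out superabundant non-dominating families is exactly the hard content of all such irreducibility statements (this is the theme of the conjecture of Harris quoted in the paper's last section), and the reducible Hilbert schemes with positive Brill--Noether number constructed in \cite{PAMS}---the very reference the authors invoke for $g=10$---show that one cannot dismiss special strata on sign-of-$\rho$ grounds. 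So, as written, the cases $(g,s)=(9,1)$ and $(10,1)$ are unjustified: you would have to either prove that \emph{every} irreducible piece of those strata dominates $\Mm_g$, or bound them directly.

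The good news is that the direct bound is exactly the residual-series device you already use for $(10,2)$, and it is immediate. For $s\ge 1$ the residual $|K_X-\Oo_X(1)|$ is a $g^{s-1}_{2g-17}$ with $2g-17\le 3$. For $(9,1)$ the hyperplane bundle is $K_X-p$, so the family of complete special series injects into the universal curve, of dimension $3g-3+1=25$; for $(10,1)$ it is $K_X-D$ with $D\in C_3$, $h^0(D)=1$, so the family injects into the universal symmetric product $\Cc_3$, of dimension $3g-3+3=30$; for $(10,2)$ it is $K_X-g^1_3$, giving $\dim\h{M}^1_{10,3}=21$ as you say. Adding $\dim\mathbb{G}(5,15-g+s)$ and $\dim\Aut(\PP^5)$ yields $25+12+35=72<74=\h{X}(15,9,5)$ and $30+6+35=71<72=\h{X}(15,10,5)$ (and $21+12+35=68<72$), so no special stratum can contain a component, independently of whether its pieces dominate $\Mm_g$. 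With this substitution your argument is complete and, unlike the paper's, self-contained. Two cosmetic remarks: for $g\le 8$ the appeal to Remark \ref{bound} is unnecessary, since $15>2g-2$ forces $s=0$ by degree reasons alone; and the incidental assertion that the bound ``can genuinely fail when $\rho<0$'' is correct and well illustrated by the paper's own genus $13$ and $16$ components.
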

\begin{proof} This follows from  \cite[p.75]{he} for $g\le 9$ and \cite[Theorem]{PAMS} for $g=10$. 
\end{proof}

\section {Curves of genus $g=15$ }
This section is devoted to the family of curves of genus $g=15$. The following subsection contains  the main result of this section.
\subsection{Irreducibility of $\HO{15,15,5}$}
\begin{theorem}\label{main15} $\HL{15,15,5}=\h{H}_{15,15,5}$ is irreducible whose general element
is a $5$-gonal curve lying on a smooth del Pezzo surface in $\PP^5$ and  $\dim\h{H}_{15,15,5}=64$. 
\end{theorem}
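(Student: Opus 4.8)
The plan is to realize the general member of every component on a smooth quintic del Pezzo surface $S\subset\PP^5$, and then to transport the irreducibility question, via the residual series, onto a Severi variety of nodal curves on a cubic scroll. First I would produce the family and pin down its dimension. Write $S=\mathrm{Bl}_4\PP^2$ with $\Oo_S(1)=-K_S=3H-E_1-E_2-E_3-E_4$ and fix $C=8H-3E_1-2E_2-2E_3-2E_4$. Then $\deg C=C\cdot(-K_S)=15$, $C^2=43$ and $p_a(C)=1+\tfrac12 C\cdot(C+K_S)=15$; since $C$ is nef with $C^2>0$ it is base-point-free with smooth irreducible general member, and the sequence $0\to\Oo_S(-K_S-C)\to\Oo_S(-K_S)\to\Oo_C(1)\to0$, in which $-K_S-C$ has no sections and $h^1(\Oo_S(-K_S-C))=0$ by Serre duality (as $2K_S+C=2H-E_1$ is nef and big), gives $h^0(\Oo_C(1))=6$, so $C$ is a linearly normal element of $\HL{15,15,5}$. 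The quintic del Pezzo being projectively unique with finite automorphism group, these surfaces form an irreducible $35$-dimensional family, while $\dim|C|=\tfrac12 C\cdot(C-K_S)=29$; hence the pairs $(S,C)$ sweep an irreducible family of dimension $35+29=64$. As the quadrics through a general such $C$ cut out $S$, the surface is recovered from the curve, the family maps generically injectively into the Hilbert scheme, and we obtain an irreducible $\h{H}_0\subset\HL{15,15,5}$ with $\dim\h{H}_0=64=\h{X}(15,15,5)+2$, a larger-than-expected component.

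Next I would exhibit the gonality pencil. The pull-back of the ruling, $|H-E_1|$, meets $C$ in $C\cdot(H-E_1)=5$ points and cuts a base-point-free $g^1_5$. To see the gonality is exactly $5$ I would use the residual series: by adjunction $|K_C(-1)|=|(2K_S+C)|_C|=|(2H-E_1)|_C|$ is a $g^4_{13}$, and $|2H-E_1|$ contracts $E_2,E_3,E_4$ and carries $S$ onto a cubic scroll $F\subset\PP^4$ with $\Oo_F(1)=C_0+2f$. The image $Y$ of $C$ has class $5C_0+8f$, degree $13$ and arithmetic genus $\pi(13,4)=18$, with exactly $\delta=3$ nodes arising from the three contracted lines, so $Y$ is an extremal nodal curve of geometric genus $15$ on $F$ whose ruling $Y\cdot f=5$ returns the $g^1_5$; the scroll structure identifies this as the minimal pencil, so $C$ is $5$-gonal.

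The substance of the theorem is the reverse inclusion, that the general $X$ of every component lies on a quintic del Pezzo. For such $X$ one has $\alpha=5$ and $h^1(\Oo_X(2))=0$, so $h^0(\Ii_X(2))\ge 21-16=5$, and I would argue that these quadrics cut out a quintic del Pezzo $S\supset X$; equivalently, after residualizing, the image $Y\subset\PP^4$ of $|K_X(-1)|$ lies on a cubic scroll $F$ as an extremal three-nodal curve of class $5C_0+8f$, the compounded alternative being excluded by Lemma \ref{bih} together with a dimension count. The irreducibility is then supplied by \cite{ty}: the Severi variety of three-nodal curves in $|5C_0+8f|$ on a fixed Hirzebruch surface is irreducible, and with the irreducible family of cubic scrolls in $\PP^4$ this makes the family of all such $Y$ irreducible; since $X$ is recovered from the abstract curve and the $g^4_{13}$ through $|\Oo_X(1)|=|K_X-g^4_{13}|$, irreducibility descends to $\HL{15,15,5}$. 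When $F$ degenerates to a cone the constraints of Remark \ref{cone1} prevent it from yielding a full component, and when $S$ is a singular normal del Pezzo Proposition \ref{b1} realizes its curves as flat limits of curves on smooth del Pezzos, placing those loci in the closure of $\h{H}_0$. Finally $\HL{15,15,5}=\h{H}_{15,15,5}$, for a non-linearly-normal component would carry a very ample, hence birationally very ample, $g^{\ge6}_{15}$, contradicting the bound $\tfrac{2d-g+1}{3}=\tfrac{16}{3}<6$ of Remark \ref{bound} since $d=g=15$.

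I expect the principal difficulty to be this reverse direction: proving that for the general member of every component the quadrics genuinely cut out a del Pezzo, equivalently that the residual $g^4_{13}$ is birationally very ample and lands on a cubic scroll as an extremal three-nodal curve with the compounded case ruled out, and then reconciling the Severi-variety irreducibility with the surface degenerations so that neither the cone degeneration of the scroll, nor the singular del Pezzos, nor any alternative curve class can contribute a second component.
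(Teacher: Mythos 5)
Your forward construction is correct and is in fact the paper's component seen from $\PP^5$ rather than $\PP^4$: the class $C=8H-3E_1-2E_2-2E_3-2E_4$ on the quintic del Pezzo is exactly the curve the paper produces in case (A1) of its proof (an octic plane model with one ordinary triple point and three nodes), your count $35+29=64$ agrees with the paper's dimension, and $|H-E_1|$ does cut the gonality pencil. Two small repairs there: the assertion that the ruling of the cubic scroll is ``the minimal pencil'' needs an argument -- the image $Y$ is nodal, so Martens-type gonality results for smooth curves on Hirzebruch surfaces do not apply directly; the paper instead excludes a $g^1_4$ by the Castelnuovo--Severi inequality. Also linear normality is immediate from $\pi(15,6)=13<15$, which is the paper's one-line argument (your appeal to Remark \ref{bound} works too).

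The genuine gap is the reverse direction, which you explicitly defer, and it is not a routine verification: it is essentially the whole of the paper's proof. Two concrete problems. First, your proposed mechanism ($h^0(\PP^5,\Ii_X(2))\ge 5$, ``I would argue that these quadrics cut out a quintic del Pezzo'') is unsupported: five independent quadrics through a curve need not intersect in a surface, let alone a normal quintic del Pezzo, and the claimed equivalence with the residual picture is never justified. The paper never argues this way; it instead proves that $\Ee=|K_X(-1)|$ is birationally very ample (Claim 1, via Lemma \ref{bih} and an enumeration of compounded cases), that no $X\in\h{H}_{15,15,5}$ is trigonal (Claim 2, Maroni theory), and that $\Ee$ is base-point-free (Claim 3, where a base point is shown to force $|\Oo_X(1)|$ to be the projection of an embedding admitting a $4$-secant line through the center, hence not very ample) -- only then does the degree-$13$ image curve $\ce\subset\PP^4$ satisfy $15\le p_a(\ce)\le 18$ and lie on a surface of degree $3$ or $4$. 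Second, even granting that, your case analysis is incomplete: besides the surviving class $|5H-2L|=|5C_0+8f|$ on the smooth cubic scroll, the paper must eliminate the competing classes $|4H+L|$ and $|6H-5L|$ on that scroll, the cubic cone, the three possible classes $(9;3^4,2)$, $(10;4^2,3^3)$, $(11;4^5)$ on a smooth quartic del Pezzo of $\PP^4$, the elliptic cone (its own computation via \eqref{conegenus3}), and singular quartic del Pezzos (via Proposition \ref{b1} reducing to the smooth case); each exclusion is a concrete check that the residual of the residual contracts a $(-1)$-curve or acquires a base point, hence fails to be very ample. Your sketch mentions only the cone degeneration of the scroll and ``singular del Pezzos,'' omitting the quartic surfaces in $\PP^4$ entirely. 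As written, the proposal establishes one irreducible $64$-dimensional family inside $\h{H}_{15,15,5}$, but not that it is all of $\h{H}_{15,15,5}$.
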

\begin{proof} Note that a general element in any component of $\h{H}_{15,15,5}$ is linearly normal since $\pi (15,6)=13<g=15$ and hence $\h{H}_{15,15,5}=\HL{15,15,5}$.

Since $\pi_1(15,5)=16>g$, $X$ is not extremal or {\it nearly extremal} -- a curve 
with $p_a(X)> \pi_1(d,r)$ -- and hence $X\subset\PP^5$ may not sit on a surface of small degree.  Instead, we look at the residual series $|K_X(-1)|$ and study the curve induced by it. This technique -- if one may call this ``a technique" --  is not new and  has been used before, e.g. in \cite{lengthy} or possibly in works by other authors.

For a general $X\in\h{H}_{15,15,5}$, set $\h{E}:=g^4_{13}=|K_X(-1)|$ and let 
$\ce\subset\PP^4$ be the dual curve of $X$, which is by definition the image curve induced by the base-point-free part of $\h{E}$. We first claim that 
$\h{E}$ is birationally very ample, possibly with non-empty base locus.

\noindent
\vni
{\bf Claim 1:} $|K_X(-1)|$ is birationally very ample.

\noindent
Suppose
$|K_X(-1)|$ is compounded. Since $\deg |K_X(-1)|=13$ is prime, $|K_X(-1)|$ has non-empty base locus $\Delta$ and we have the following four possibilities: 
\begin{itemize}
\item[(i)]
$|K_X(-1)|=g^4_{12}+\Delta$, $\deg\Delta=1$, 
\item[(ii)] $|K_X(-1)|=g^4_{10}+\Delta$, $\deg\Delta=3$,
\item[(iii)] $|K_X(-1)|=g^4_{9}+\Delta$, $\deg\Delta=4$; this can be excluded since the dual curve  $\ce\subset\PP^4$ induced by the compounded $g^4_{9}$ is non-degenerate. 

\item[(iv)] $|K_X(-1)|=g^4_{8}+\Delta$, $\deg\Delta=5$; in this case, $X$ is hyperelliptic and does not carry a very ample special linear series. 
\end{itemize}

\noindent
(i) Suppose $|K_X(-1)|=g^4_{12}+\Delta$, $\deg\Delta=1$, then one of the following holds;
\[
\begin{cases}
\mathrm{(ia)} ~X \mathrm{ ~is ~~trigonal ~~with~}  |K_X(-1)(-\Delta)|=g^4_{12}=4g^1_3 \mathrm{ ~or~~ }
\\
\mathrm{(ib)} ~\exists ~~X\stackrel{\phi}{\rightarrow}\ce\subset\PP^4, \deg\phi=2,

 \mathrm{~~genus}(\ce)=2, g^4_{12}=
\phi^*(g^4_6).
\end{cases}
\]

(ia)
Set $\Delta=p$ and let $p+q+r\in g^1_3$. We have
$$|K_X-(4g^1_3+p)|=|K_X-5g^1_3+q+r|$$ and hence
\begin{align*}
|\h{O}_X(1)-q-r|&=|K_X-K_X(-1)-q-r|\\&=|K_X-(4g^1_3+p)-q-r|=|K_X-5g^1_3|=g^{s}_{13}, s\ge 4
\end{align*}
from which it follows that  $|\mathcal{O}_X(1)|$ is not very ample.

(ib) Since $\deg \phi=2$, $\deg \phi (X)=\deg\ce=6$ and $g(\ce)\le\pi(6,4)\le 2$ by Castelnuovo genus bound.
Since $|K_X(-1)(-\Delta)|$ is complete, $g^4_6$  on the normalization of $\ce$ such that  $g^4_{12}=|K_X(-1)(-\Delta)|=
\phi^*(g^4_6)$ is also complete. Hence $\ce$ has geometric genus $g(\ce)=2$, smooth and $|\h{O}_{\ce}(1)|=g^4_6$ is non-special.
However we may exclude this case by Lemma \ref{bih}.
 
\vni
(ii) Suppose $|K_X(-1)|=g^4_{10}+\Delta$, $\deg\Delta=3$. Since $\ce\subset\PP^4$ is non-degenerate and $X$ is non-hyperelliptic, we have 
$g^4_{10}=\phi^*(g^4_5)$ where $\phi :X\rightarrow \ce$ is a double cover of an elliptic curve $\ce$
 with complete,  non-special $g^4_5$.  Again by  Lemma \ref{bih}, we may exclude this case, finishing the proof of Claim 1. 

\vskip 4pt
\noindent
{\bf Claim 2:} No smooth $X\in\h{H}_{15,15,5}$ is trigonal: Suppose there is  a trigonal $X\in\h{H}_{15,15,5}$. 
Recall that on a trigonal curve $X$ of genus $g\ge 5$ with a $g^r_d, ~ d\le g-1$, either $g^r_d$ is compounded with the unique $g^1_3$  or  $|K_X-g^r_d|$ is compounded with $g^1_3$ by Maroni theory; cf. \cite[Proposition 1]{ms}. Since $\h{E}$ is birationally very ample by Claim 1, $|K_X-\h{E}|=|\Oo_X(1)|$ is  compounded with $g^1_3$, a contradiction.   This finishes the proof of Claim 2.
\vni
{\bf Claim 3:} $|K_X(-1)|$ is base-point-free.

For the birationally very ample $\Ee=|K_X(-1)|$, suppose $\Delta=\mathrm{Bs}(\h{E})\neq\emptyset$ and set $\h{E}=\w{\h{E}}+\Delta$. Note that 
$\deg\Delta=1$ otherwise $\pi(e,4)\le 12<g$ if $e\le 11$. Put $\Delta=p$ and let $C_{\w{\h{E}}}\subset\PP^4$ be the image of the morphism induced by the moving part $\w{\h{E}}$. Since $\deg C_{\w{\h{E}}}=12$ and $\pi(12,4)=15=g$,  $C_{\w{\h{E}}}$
is an (smooth) extremal curve lying on a cubic surface $S\subset\PP^4$.

\vskip 1.5pt
\noindent
(i) Suppose $S$ is a smooth cubic scroll in $\PP^4$ and let $C_{\w{\h{E}}}\in|aH+bL|$. We solve the degree and genus formula \eqref{sd} for $r=4$
to get $a=4, b=0$. Since $X\cong C_{\w{\h{E}}}$  and $|K_X(-1)|=\h{E}=\w{\h{E}}+\Delta$ we have
$$|K_S+C_{\w{\h{E}}}-H|_{|X}=|H+L|_{|X}=|K_X-\w{\h{E}}|,$$ which is birationally very
ample.
From the standard exact sequence
$$0\rightarrow \h{I}_{C_{\w{\h{E}}}}(H+L)\rightarrow \h{O}(H+L)\rightarrow\h{O}(H+L)\otimes\h{O}_{C_{\w{\h{E}}}}\rightarrow 0$$
the restriction map 
$H^0(S, \h{O}(H+L))\stackrel{\rho}{\longrightarrow} H^0(C_{\w{\h{E}}},\h{O}(H+L)\otimes\h{O}_{C_{\w{\h{E}}}})$ is injective since $H^0(S,  \h{I}_{C_{{\w{\h{E}}}}}(H+L))=H^0(S,  \h{O}(-3H+L))=0$. $\rho$ is surjective as well
by Castelnuovo genus bound; if $\rho$ is not surjective  then   $$\mathbb{C}^7\cong H^0(S, \h{O}(H+L))\subsetneq
H^0(C_{\w{\h{E}}},\h{O}(H+L)\otimes\h{O}_{C_{\w{\h{E}}}}),$$ which would imply that  the birationally very ample
$|\h{O}(H+L)\otimes\h{O}_{C_{\w{\h{E}}}}|=|K_X-\w{\h{E}}|$ on $C_{\w{\h{E}}}\cong X$ induces a morphism $X\rightarrow\PP^s, s\ge 7$,  a contradiction by $\pi(16,7)=12<g$.
 
 By the surjectivity of the restriction map $\rho$ and by the very ampleness of $|H+L|$ on the cubic scroll $S$, $|K_X-\w{\h{E}}|$ is  very ample. Note that $|\h{O}_X(1)|=|K_X-\h{E}|=|K_X-\w{\h{E}}-\Delta|$ gives rise to the morphism
 which is the composition of the embedding $X\stackrel{\phi}{\longrightarrow}\PP^6$ given by the very ample $|K_X-\w{\h{E}}|$ followed by the projection $\tau$ with center at $p$; 
 \[
  \begin{tikzcd}
   {X\cong C_{\w{\h{E}}}}  \arrow{r}{|K_X-\w{\h{E}}|} \arrow[swap]{dr}{|K_X-\w{\h{E}}-\Delta|} & \phi(C_{\w{\h{E}}})\subset\PP^6 \arrow{d}{\tau} \\
     & X\subset\PP^5
  \end{tikzcd}
\]
However, the $4$-secant line through $\Delta$, i.e. the line though $\Delta$ in the ruling $|L|$  produces a singularity hence $|\h{O}_X(1)|$ is not very ample if $\Delta\neq \emptyset$.

\vni 
(ii) Suppose $S\subset\PP^4$ is a cone over a twisted cubic in $\PP^3$. 
Let $\w{C_{\w{\h{E}}}}\subset \mathbb{F}_3$ be the strict transformation of $C_{\w{\h{E}}}\subset\PP^4$ under 
$\mathbb{F}_{3}\stackrel{|h+3f|}{\longrightarrow} S$ and set $\w{C_{\w{\h{E}}}}\in |ah+bf|$. Recall that 
$C_{\w{\h{E}}}$ is smooth (extremal) and so is $\w{C_{\w{\h{E}}}}\cong X$. From
$$\w{C_{\w{\h{E}}}}\cdot (h+3f)=(ah+bf)\cdot (h+3f)=b=\deg\w{\Ee}=12$$
$$\w{C_{\w{\h{E}}}}\cdot(\w{C_{\w{\h{E}}}}+K_{\mathbb{F}_3})=(ah+bf)\cdot((a-2)h+(b-5)f)=2g-2=28$$
we get $a=4$ and  $\w{C_{\w{\h{E}}}}\in |4h+12f|$.
Note that
\[
 |K_{\mathbb{F}_3}+\w{C_{\w{\h{E}}}}-(h+3f)|=|h+4f|
 \]
is very ample by \cite[V.Cor. 2.18, p. 380]{Hartshorne}.  We consider the restriction map
$H^0(\mathbb{F}_3, \h{O}(h+4f))\stackrel{\rho}{\longrightarrow} H^0(\w{C_{\w{\h{E}}}},  \h{O}(h+4f)\otimes\h{O}_{\w{C_{\w{\h{E}}}}})=H^0(\w{C_{\w{\h{E}}}},  |K_X-\w{\h{E}}|).$ By the same routine as we did for the previous case (smooth cubic scroll case), we may claim that the restriction map $\rho$ is  surjective. 
 
 By the surjectivity of the restriction map $\rho$ and  the very ampleness of $|h+4f|$ on $\FF_3$, we see that  $|K_X-\w{\h{E}}|$ is very ample. 
 $|\h{O}_X(1)|=|K_X-\w{\h{E}}-\Delta|$ induces the composition of two maps 
$X\stackrel{|h+4f|_{|X}}{\longrightarrow}\PP^6\stackrel{\pi_p}{\longrightarrow}\PP^5$  and the projection map $\pi_p$ is not an embedding; since $f\cdot (h+4f)=1, f\cdot \w{C_{\w{\h{E}}}}=4$,  the image of $f$  under $|h+4f|$  in $\PP^6$ is a $4$-secant line to the smooth image of
$X\cong\w{C_{\w{\h{E}}}}$. Therefore $|\h{O}_X(1)|$ is not very ample, {\it finishing the proof
of Claim 3.}
 \vni
 
Since $\h{E}=|K_X(-1)|$ is base-point-free and birationally very ample, we have $$g=\pi_1(13,4)=15\le p_a(\ce)\le\pi(13,4)=18.$$
Thus $\ce\subset\PP^4$ lies on a surface $S$, $3\le\deg S\le4$; cf. \cite[Theorem 3.15, p.99]{he}.

\item[(A)] Suppose $\deg S=3$ and $S$ is smooth.  Let $\ce\in|aH+bL|$. Solving  \eqref{sd}, i.e.
$13=\ce\cdot H=3a+b, ~p_a(\ce)=\frac{3(a-1)(a-2)}{2}+(2+b)(a-1),$
the following pairs $(a,b)\in\mathbb{Z}\times\mathbb{Z}$ are possible for  $15\le p_a(\ce)\le 18$:
\[
\begin{cases}
p_a(\ce)=18: (5,-2), (4,1), \textrm{cases (A1), (A2) below} \\
p_a(\ce)=17, 16: \mathrm{no ~~ solution }\\
p_a(\ce)=15: (3,4), (6-5), \textrm{cases (A3), (A4) below} \
\end{cases}
\]
Before proceeding, we recall some standard notation concerning linear systems and divisors on a blown up projective plane. Let $\PP^2_s$ be the rational surface $\PP^2$ blown up at $s$ general points. Let $e_i$ be the class of the exceptional divisor
$E_i$ and $l$ be the class of a line $L$ in $\PP^2$. For integers  $b_1\ge b_2\ge\cdots\ge b_s$, let $(a;b_1,\cdots, b_i, \cdots,b_s)$ denote class of the linear system $|aL-\sum b_i E_i|$ on $\PP^2_s$.  By abuse of notation we use the  expression $(a;b_1,\cdots, b_i, \cdots,b_s)$ for the divisor $aL-\sum b_i E_i$ and $|(a;b_1,\cdots, b_i, \cdots,b_s)|$ for the linear system $|aL-\sum b_i E_i|$. We use the convention 
\vspace{-3pt}
$$(a;b_1^{s_1},\cdots,b_j^{s_j},\cdots,b_t^{s_t}), ~ \sum s_j=s$$ 
when  $b_j$ appears $s_j$ times consecutively  in the linear system $|aL-\sum b_i E_i|$.

For computational reasons, we sometimes identify a smooth rational normal surface scroll $S\subset \PP^4$ with the Hirzebruch surface $\mathbb{F}_1$ embedded by the very ample linear system $|e+2f|$ on $\PP^2_1$. We also make obvious identifications among divisor classes such as; 
\vspace{-2pt}
$$H\cong e+2f, L\cong f, l-e_1\cong f_1$$ where $e$ is the class of the minimal degree
self-intersection curve, $f$ is class of the fibre on $\mathbb{F}_1$ ($l$ is the class of a line, $e_1$ the exceptional divisor on $\PP^1_1$ and $f_1$ is the proper transformation of the line through the blown up point). By abusing notation we make no 
distinction between $f$ and $f_1$ ($e$ and $e_1$) and use the same letters.

\vskip 2pt
\noindent
\item[(A1)] $\ce\in |5H-2L|$, $p_a(\ce)=18$: 
Note that 
\begin{equation}\label{octic}\ce\in |5H-2L|=|5(2l-e_1)-2(l-e_1)|=|8l-3e_1|
\end{equation} and hence $\ce$ has a plane model of degree $8$. On a fixed $S\cong\FF_1$, we consider the Severi variety $\Sigma_{{15},\h{M}}$ of curves
of genus $g=15$ in the linear system $\h{M}=|5H-2L|$.  It is known that $\Sigma_{{15},\h{M}}$ is {\it irreducible}, 
\vspace{-2pt}
$$\dim\Sigma_{{15},\h{M}}=\dim\h{M}-\delta , ~~\delta=p_a(C)-g$$  and a general element of $\Sigma_{{15},\h{M}}$ is a nodal curve with $\delta =3$ nodes as its only singularities; cf. \cite{ty}.
We now take a general $C\in \Sigma_{{15},\h{M}}$, i.e.  a curve with three nodes in $|5H-2L|$ on $S$, or equivalently  a curve with a plane model of degree $8$ with an ordinary triple point and three nodes.
Let $\w{C}\subset\PP^2_4$ be the strict transformation of $C$ under the blowing up $\PP^2_4 
\stackrel{\phi}{\longrightarrow} 
S\cong\PP^2_1$ at three nodal singularities. By abusing notation, we set  $H:=\phi^*(\Oo_S(1))$. We have $\w{C}\in |(8;3,2^3)|$ and  
\vspace{-2pt}
\begin{equation}\label{del}
|K_{\PP^2_4}+\w{C}-(2l-e_1)|=|-(3;1^4)+(8;3,2^3)-(2;1,0^3)|=|(3;1^4)|.
\end{equation}
On the other hand, the restriction map
 $$H^0(\PP^2_4,\h{O}(K_{\PP^2_4}+\w{C}-H)\longrightarrow H^0(\w{C}, \h{O}(K_{\PP^2_4}+\w{C}-H)\otimes\h{O}_{\w{C}})$$ is an isomorphism; injective by $H^0(\PP^2_4, \h{O}(K_{\PP^2_4}-H))=0$  and surjective by the Castelnuovo genus bound $\pi(15,6)=13<g$, as we did in the proof of Claim 3.
Therefore the residual series  
$$|K_{\PP^2_4}+\w{C}-\phi^*\Oo_S(1)|_{|\w{C}}=|K_{\w{C}}-\phi^*\Oo_S(1)_{|\w{C}}|$$
of $|\phi^*\Oo_S(1)|_{|\w{C}}$ is completely cut out by the very ample linear system $|(3;1^4)|$ on $\PP^2_4$ which induces an embedding $\w{C}\to\PP^5$ as a smooth curve of degree $$(3;1^4)\cdot \w{C}=(3;1^4)\cdot (8;3,2^3)=24-3-6=15.$$

Let $\h{F}\subset\h{G}^4_{13}$ be the family of such $\h{E}=g^4_{13}$'s arising this way; i.e. the family of complete linear series $\h{E}=g^4_{13}$'s such that $\ce\in \Sigma_{15,\h{M}}$ on a  rational normal scroll $S\subset\PP^4$, $\h{M}=|5H-2L|$ and $\h{E}=|\phi^*\Oo_S(1)|_{|\w{\ce}}$. $\h{F}$ is irreducible since $\Sigma_{15,\h{M}}$ is irreducible.  By an easy dimension count, 
\begin{align*}
\dim\h{F}&=\dim\Sigma_{{15},\h{M}}-\dim\mathrm{Aut}(S)\\&=\dim|5H-2L|-\delta-\dim\mathrm{Aut}(S)=\dim|(8;3)|-3-6=29\\&>3g-3+\rho(13,15,4)=3g-3+\rho(15,15,5)=27,
\end{align*}
hence  {\it the family $\h{F}$ is our first candidate which may contribute to a full component of $\HO{15,15,5}$}. Explicitly,
we have
the natural map $\h{F}\stackrel{\psi}{\rightarrow} \h{F}^\vee$ sending $\h{E}=g^4_{13}$ to its residual series $|K-\h{E}|=g^5_{15}$ such that $\psi(\h{E})$ is very ample for a general $\h{E}\in\h{F}$. Thus, there exists an irreducible family of 
smooth curves of degree $d=15$ and $g=15$ in $\PP^5$ which is an $\Aut(\PP^5)$-bundle over the family $\h{F}^\vee\subset\h{G}^5_{15}$.

\item [(A2)] $\ce\in |4H+L|$, $p_a(\ce)=18$:
We have $\ce\in|4H+L|=|4(2l-e_1)+l-e_1|=|9l-5e_1|$.
Set $\h{L}=|4H+L|$ and let  $\Sigma_{{15},\h{L}}$ be the Severi variety $\Sigma_{{15},\h{L}}$ of curves
of genus $g=15$ in the linear system $\h{L}$.  A general element of $\Sigma_{{15},\h{L}}$ is a nodal curve with $\delta =3$ nodes as its only singularities.
We take a general $C\in \Sigma_{{15},\h{L}}$.
Let $\w{C}\subset\PP^2_4$ be the strict transformation of $C$ under the blow up of $S\cong\PP^2_1$ at three nodal singularities. Hence $\w{C}\in|(9;5,2^3)|$ and  
$$|K_{\PP^2_4}+\w{C}-(2l-e_1)|=|-(3;1^4)+(9;5,2^3)-(2;1,0^3)|=|(4;3,1^3)|.$$
The restriction map $$H^0(\PP^2_4,\h{O}(K_{\PP^2_4}+\w{C}-H)\longrightarrow H^0(\w{C}, \h{O}(K_{\PP^2_4}+\w{C}-H)\otimes\h{O}_{\w{C}})$$ is an isomorphism; injective by  $H^0(\PP^2_4, \h{O}(K_{\PP^2_4}-H))=0$ and surjective by the Castelnuovo genus bound $\pi(15,6)=13<g$ by the same routine as we did in (i) in the proof of Claim 3.
Therefore the residual series of $|\phi^*\Oo_S(1)|_{|\w{C}}$ is completely cut out by the linear system $|(4;3,1^3)|$ on $\PP^2_4$. Note that $|(4;3,1^3)|$ is not very ample by \cite{sandra}. Furthermore we have $(9;5,2^3)\cdot (2,1,1,0^2)=2$ whereas $(4;3,1^3)\cdot (2;1,1,0^2)=0$ contracting the $(-1)$ curve $l-e_1-e_2$ to a point.  Hence the image curve under the morphism induced by $|K_{\w{C}}-\h{E}|$ has a singularity. Therefore the family of linear series arising from 
$\h{L}=|4H+L|$ {\it does not} contribute to a component of $\h{H}_{15,15,5}$. 
\item[(A3)] $\ce\in |3H+4L|$, $p_a(\ce)=15$: In this case $\ce$ is trigonal and by Claim 2,  this case does not occur. 

\item[(A4)] $\ce\in |6H-5L|$, $p_a(\ce)=15$: In this case $\ce\subset\PP^4$ is smooth. For $C_\h{E}\in |6H-5L|$, we have 
 $|K_S+\ce-H|=|3H-4L|.$
We may argue as in the two previous cases (A1) and (A2) to see that the restriction map $$H^0(S,\h{O}(3H-4L))\rightarrow H^0(\ce,\h{O}(3H-4L)\otimes\h{O}_{\ce})$$ is an isomorphism; by $H^0(S,\h{O}(3H-4L-\ce))=H^0(S,\h{O}(-3H+L))=0$ and by Castelnuovo genus bound. 

We also remark that the linear system $|3H-4L|=|2(H-L)+(H-2L)|$ has the fixed part 
$|H-2L|$; $$H^0(S,\h{O}(H-2L))=1 \mathrm{~and~} h^0(S,\h{O}(3H-4L))=h^0(S,\h{O}(2H-2L))=6.$$ Hence the the linear series $|K_{\ce}-\h{E}|$ cut out by the linear system $|3H-4L|$ has 
non-empty base locus $B$, $\deg B= (H-2L)\cdot\ce=(H-2L)\cdot(6H-5L)=1$ and hence $\ce\cdot 2(H-L)=14\neq 15$. This shows that curves in $|6H-5L|$  {\it does not} contribute to a component of $\h{H}_{15,15,5}$.
\vni
\item[(B)] $\deg S=3$ and $S$ is a cone over a twisted cubic in $\PP^3$; $\ce\subset S\subset\PP^4$.

\noindent
Let $\mathbb{F}_3\stackrel{|h+3f|}{\longrightarrow} S\subset\PP^4$ be the minimal desingularization, which contracts $h$ to the vertex $q\in S$
; $h^2=-3, h\cdot f=0, f^2=0$.  Let $\w{C_{{\h{E}}}}\subset \mathbb{F}_3$ be the strict transformation of $C_{{\h{E}}}\subset\PP^4$ and set $\w{C_{{\h{E}}}}\in |ah+bf|$. We have 
$$\w{C_{{\h{E}}}}\cdot (h+3f)=(ah+bf)\cdot (h+3f)=b=13.$$
By \eqref{conevertex1},  $m:=h\cdot\w{\ce}=h\cdot (ah+13f)=-3a+13\ge 0,$ thus $a\le 4$ and hence $a=4$ by Claim 2. Thus $\w{\ce}\in|4h+13f|$, $m=1$, $p_a(\w{\ce})=18$ by adjunction formula, $\ce$ passes through the vertex $q$ of  $S$ and $\ce$ is smooth at $q$.

Set $\h{N}=|4h+13f|$ and let $\Sigma_{15,\h{N}}$ be the Severi variety $\Sigma_{15,\h{N}}$ consisting of curves 
of genus $g=15$ in the linear system $\h{N}$
on $\mathbb{F}_3$. Since a general element of  $\Sigma_{15,\h{N}}$
is a nodal curve, we may assume that $\wce$ has $3$
nodes. Let $\mathbb{F}_{3,3}$ be the blow up of $\mathbb{F}_3$ at three nodes. 
Let $e_i~ (i=1,\cdots, 3)$ be exceptional divisors of the blow up and let
$f_i~ (i=1,\cdots, 3)$ be three typical fibers containing the three nodal points of $\wce$. After resolving the three nodal singularities of $\wce$, we get a smooth curve 
${\widehat{\ce}}\subset\mathbb{F}_{3,3}$. We have
$$\hce\in|4h+13f-\sum2e_i|, \mathrm{~~ and ~set}$$
\begin{align*}
\h{M}:&=|\hce+K_{\mathbb{F}_{3,3}}-(h+3f)|\\&=|(4h+13f-\sum2e_i)+(-2h-5f+\sum e_i)-(h+3f)|\\&=|h+5f-\sum e_i|.
\end{align*}
Since 
$\h{O}_{\mathbb{F}_{3,3}}(\h{M}-\hce)=\h{O}_{\mathbb{F}_{3,3}}(-(3h+8f-\sum e_i))$
and $f\cdot (\h{M}-\w{\ce})<0$,  we have $h^0({\mathbb{F}_{3,3}},\h{O}(\h{M}-\hce))=0$ implying that the restriction map 
$$\rho: H^0({\mathbb{F}_{3,3}},\h{O}(\h{M}))\longrightarrow H^0(\hce,\h{O}(\h{M})\otimes\h{O}_{\hce})$$ is injective. Note that $h^0({\mathbb{F}_{3,3}},\h{O}(\h{M}))=
h^0(\mathbb{F}_3,\h{O}(h+5f))-3=6$.
Set $\h{D}:=\PP(H^0(\hce,\h{O}(\h{M})\otimes\h{O}_{\hce}))$.  Since $X\stackrel{iso}{\cong}\hce\stackrel{bir}{\cong}{\w{\ce}}\stackrel{bir}{\cong}{\ce}$, $\Dd=|K_{\hce}-\Ee|$ is birationally very ample.
If $\rho$ is not surjective, we have
$\dim\Dd\ge 6$ and $\pi(15,6)=13<g=15$, which is a contradiction. Therefore we have 
$$\mathrm{Im}(\rho)= H^0(\hce,\h{O}(\h{M})\otimes\h{O}_{\hce})$$
and the restriction map $\rho$ is surjective. 
Denoting by $\w{f}_i$  the proper transformation of three typical fibers of $f_i$ under the blow up $\FF_{3,3}\rightarrow\FF_3$, we have
$$\tilde{f}_i^2=-1, \tilde{f}_i\cdot e_i=1,  h\cdot\tilde{f}_i=1, \hce\cdot \tilde{f}_i=2, 
(h+5f-\Sigma e_i)\cdot \tilde{f}_i=0.$$
Hence the morphism $\psi$ induced by $\h{M}=|\hce+K_{\mathbb{F}_{3,3}}-(h+3f)|$ contracts  $(-1)$ curves $\tilde{f}_i$ and the image curve $\psi (\hce)\subset\PP^5$ 
acquires singularities. It then follows that $\h{M}\otimes\h{O}_{\hce}=|K_{\hce}(-1)|=\Dd$ is not very ample.
\vni
\item[(C)] $\deg S=4$ and $S$ is smooth. 

\noindent
Note that  $S\cong\PP^2_5$ and 
$\ce$ is smooth; if not, we have
$15=g=\pi_1(13,4)\lneq p_a(\ce)\le \pi(13,4)=18$ 
and  hence $\ce$ is a nearly extremal curve lying on a cubic surface, which has been treated already in the steps (A) and (B).
Setting $\ce\in |(a;b_1,\cdots ,b_5)|$, we have 
$$\deg\ce =3a-\sum b_i=13, \ce^2=a^2-\sum b_i^2=2g-2-K_S\cdot C=41.$$
By Schwartz's inequality,  we have $$(\sum b_i)^2\le 5(\sum b_i^2)$$ and substituting 
$\sum b_i=3a-13$, $\sum b_i^2=a^2-41$ we obtain 
$$5\,({a}^{2}-41)- \left( 13-3\,a \right) ^{2}\ge 0 \Leftrightarrow a=9, 10, 11$$ 
and therefore  we have the following three cases;
\[ (a;b_1,\cdots, b_5)=
\begin{cases}
(9;3^4,2)\\
(10;4^2,3^3)\\
(11;4^5)
\end{cases}
\]
We need to check if  $|K_{\ce}-\h{E}|=|K_S+\ce-H|_{|\ce}=|\ce+2K_S|_{|\ce}$ is very ample.
The restriction map $\rho: H^0(S,\h{O}(\ce+2K_S)\rightarrow H^0(\ce,\h{O}(\ce+2K_S)\otimes\h{O}_{\ce})$
is an isomorphism;  $\ker(\rho)=H^0(S, 2K_S)=0$, $h^0(S,\h{O}(\ce+2K_S))=6$ and by the Castelnuovo genus bound $\pi (15,6)=13<g$ if $\rho$ is not surjective.
Assume the last case among three in the above list; $\ce\in |(11;4^5)|$. We have $$|K_S+\ce-H|=|(11;4^5)-2(3;1^5)|=|(5;2^5)|,$$  whose restriction on $\ce$ does not induce an isomorphism onto its image. To see this, the $(-1)$ curve
$(2;1^5)$ on $S$ is contracted to a point; $(2;1^5)\cdot (5;2^5)=0$ whereas $(11;4^5)\cdot (2;1^5)=2$ and hence the image curve in $\PP^5$ is singular. The verification for the other two cases 
$(9;3^4,2),
(10;4^2,3^3)$ are similar which we omit.  Hence we conclude that $|K_{\ce}-\h{E}|$ is not very ample. 
\item[(D)] 
$\deg S=4$ and $S$ is a cone over an elliptic curve  $E\subset\PP^3$: 

\noindent
Recall that a cone $S\subset\PP^r$ over an elliptic curve $E\subset H\cong\PP^{r-1}$ with vertex outside $H$ is the image of the birational morphism $\mathbb{E}_{r}:=\mathbb{P}(\h{O}_{E}\oplus\h{O}_{E}(r))\rightarrow S\subset\PP^r$ induced by $|\bar{h}|:=|h+rf|$, where $h^2=-r$ and $f$ is the fibre of $\mathbb{E}_{r}\stackrel{\eta}{\rightarrow} E$. 
Let $C\subset S$ be an integral curve of degree $d$ with  the strict transformation  $\w{C}$ under  $\mathbb{E}_{r}\rightarrow S$. 
~Setting $k=\w{C}\cdot f$, we have $\w{C}\equiv kh+df$, $\deg\eta_{|\w{C}}= \w{C}\cdot f=k$ and
\vspace{-8pt}
\begin{equation}\label{conegenus3}
p_a(\w{C})=
(k-1)(d-\frac{kr}{2})+1,~~
0\le \w{C}\cdot h=d-rk=m
\end{equation} 
where $m$ is the multiplicity of $C$ at the vertex.

For $C=\ce$, $d=13$ and $r=4$ we get $(k,m,p_a(\w{\ce}))\in\{(3,1,15), (2,5,10)\}$ by \eqref{conegenus3}. 
In the first case, since  $g=15=p_a(\w{\ce})$, $\w{\ce}\in |3h+13f|=|3\bar{h}+f|$ is smooth and  is a triple covering of an elliptic curve. The second case is not possible since $p_a(\w{\ce})<g$.
We now check if   $$|K_{\mathbb{E}_4}+\w{\ce}-\bar{h}|_{|\w{\ce}}=|(-2\bar{h}+4f+(k\bar{h}+(d-4k)f)-\bar{h}|_{|\w{\ce}}=|5f|_{|\w{\ce}}$$ is very ample. 
\vni
{\bf Claim:} The restriction map 
$$\rho:  H^0(\mathbb{E}_4,|K_{\mathbb{E}_4}+\w{\ce}-\bar{h}|)\to H^0(\w{\ce},K_{\w{\ce}}((-1)))$$
{\it fails} to be surjective: We consider the exact sequence 
$$0\rightarrow\h{I}_{\w{\ce}}(5f)=\h{O}_{\EE_4}(-3\bar{h}+4f)\rightarrow\h{O}_{\EE_4}(5f)\rightarrow\h{O}(5f)_{|\w{\ce}}\rightarrow 0.$$
We have 
$h^0(\EE_4,\h{O}_{\EE_4}(-3\bar{h}+4f))=0$,  $h^0({\EE_4},\h{O}_{\EE_4}(5f))=5$,  $h^1({\EE_4},\h{O}_{\EE_4}(5f))=0$ and by Serre's duality
$$h^1(\EE_4,\h{O}_{\EE_4}(-3\bar{h}+4f))=h^1(\h{O}_{\EE_4}(\bar{h}))=1.$$
Thus $\rho$ 
is not surjective and $h^0(\w{\ce}, \h{O}(5f)_{|\w{\ce}})=6$.

\vni
{\bf Claim:}
$|K_{\w{\ce}}(-1)|=|K_{\mathbb{E}_4}+\w{\ce}-\bar{h}|_{|\w{\ce}}$ is not very ample.
\vni
Let $V:=\mathrm{Im}(\rho)\subset  H^0(\w{\ce},\h{O}(5f)_{|\w{\ce}})$ and we assume $|K_{\w{\ce}}(-1)|$ is very ample inducing an isomorphism $\phi$ onto $X_1$. Since $\PP(V)\subsetneq |K_{\w{\ce}}(-1)|$ and $\dim V=5$ we have the following commutative diagram:
\[
  \begin{tikzcd}
   \mathbb{E}_4\supset\w{\ce}  \arrow{r}{\phi~~ \cong} \arrow[swap]{dr}{\psi} & X_1\subset\PP^5 \arrow{d}{\tau} \\
     & E_1\subset\PP^4
  \end{tikzcd}
\]
(a)  $\psi$ is the morphism on $\EE_4$ induced by the base-point-free $\PP(V)=|5f|$, $\deg E_1=\bar{h}\cdot 5f=5$, $\deg\psi_{|\w\ce}=\w{\ce}\cdot f=(3\bar{h}+f)\cdot f =3$. $E_1$ is elliptic by Claim 2.

\noindent
(b)
$\tau$ is the projection map with center of projection 
$p_1\in\PP^5$ corresponding to $\PP(V)$, i.e. the intersection of all hyperplanes corresponding to divisors in $\PP(V)$,  inducing a morphism $X_1\stackrel{\tau}{\to}E_1$, $\deg\psi=\deg\tau =3$, $\tau\circ\phi=\psi$ and $p_1\notin X_1$ since $\PP(V)$ is base-point-free.

Let $T_1\subset\PP^5$
be a  cone over $E_1$ with vertex $p_1$. 
$T_1$ is the image of the morphism on $\mathbb{E}_5:=\mathbb{P}(\h{O}_{E_1}\oplus\h{O}_{E_1}(5))$ induced by  $|\w{h}|:=|h+5f|$ and we have $X_1\subset T_1$.
Let $\w{X}_1$ be the strict transformation of $X_1$ via $\EE_5\rightarrow T_1$. 
Setting $k=\w{X}_1\cdot f$ we have $\w{X}_1\equiv k\w{h}+(d-5k)f$.  By \eqref{conegenus3}
(for $d=15$, $r=5$) we get
$(k,m,p_a({\w{X}_1}))\in\{(3,0,16), (2,5,11)\}$. 
In the first case $p_a(\widetilde{X})=p_a(X)=16>g=15$, a contradiction since $X_1$ and $\w{X}_1$ are smooth.  The second 
case is not possible since $p_a(\w{X})<g$. Thus we deduce that $|K_{\w{\ce}}(-1)|$ is not very ample.
\item[(E)] $\deg S=4$ and $S$ is singular with isolated singularities; $\ce\subset S\subset\PP^4$.
We see that $\ce$ is smooth; otherwise we have
$15=g=\pi_1(13,4)\lneq p_a(\ce)\le \pi(13,4)=18$,
hence $\ce$ is a nearly extremal curve lying on a cubic surface which has been treated already in the steps (A) and (B).
We assume that there is a smooth curve $C$ -- which we may take as $\ce\subset\PP^4$ under our current situation --  of degree $d=13$ and genus $g=15$ on a singular del-Pezzo surface $S$ with isolated singularities. We also assume that the  dual curve of $\ce$  is a smooth curve  $X\subset\PP^5$.  By Proposition \ref{b1}, we let  $\{C_t\}$ be a one parameter flat family of (smooth) curves with $C_0=C=\ce$ lying on a singular del Pezzo $S\subset\PP^4$ and $\{C_t; t\neq 0 \},$ lying on 
smooth del Pezzo surfaces. If the dual curve $X=C_0^\vee\subset\PP^5$ is smooth, dual curves $\{C_t^\vee; t\neq 0\}$ are also smooth since singular curves cannot specialize to a smooth curve $X=C_0^\vee$. However
this is contradictory to what we have verified in (C), i.e. every smooth curve $C_t\subset \PP^4$ with $(d,g)=(13,15)$ lying on a smooth del Pezzo has its dual curve in $\PP^5$ which is always singular. 
\vni
{\bf Conclusion:} We have exhausted all the possibilities for the surfaces $S\subset \PP^4$ on which dual curves $\ce$ of 
$X\in\h{H}_{15,15,5}$ may sit. Our lengthy discussion in parts (A)--(E) shows that the only case such that the residual series of the hyperplane series of $\ce\subset\PP^4$ is very ample - among all the possibilities for the surface $S$ - is the case  (A1);  $\ce\subset S\subset\PP^4$ lies a smooth cubic surface $S$, $p_a(\ce)=18$, $\ce\in \Sigma_{15,\h{M}}$ where $\h{M}=|5H-2L|$. Part (A1) also shows that the curve $X$ corresponding to a general  element in the Severi variety $\Sigma_{15,\h{M}}$ lies on a smooth del Pezzo surface in $\PP^5$; cf. \eqref{del}.  From  \eqref{octic}, we see that a general $X\in\HO{15,15,5}$ has a plane model of degree $8$ with an ordinary triple point and lines through the triple point  cut out a base-point-free  $g^1_5$. $X$ does not  have $g^1_4$ by Castelnuvo-Severi inequality. 
\end{proof}
\subsection{Moduli map $\mu: \h{H}_{15,15,5}\to \Mm_{15}$ }
In this subsection, we show that two smooth curves in $\HO{15,15,5}$ are isomorphic as abstract curves if and only if 
they are projectively equivalent. In order to prove this seemingly plausible assertion, we need several preparatory results which occupy a major part of this subsection.

Let  $X\subset \PP^r$, $r\ge 2$, be a smooth curve of genus $g\ge 2$. Since $X$ has only finitely many automorphisms, the set $G:=\{h\in \Aut(\PP^r)| h(X)=X\}$ is a finite group. Hence the set $$\Aut(\PP^r)X:=\{Y\subset\PP^r | Y=\sigma (X) \mathrm{~ for ~some~ }\sigma\in \Aut (\PP^r)\}$$ consisting of all curves $Y\subset \PP^r$ projectively equivalent to $X$ is an irreducible  quasi-projective variety isomorphic to $\Aut(\PP^r)/G$. 
\begin{theorem}\label{m15}
Let $\mu: \h{H}_{15,15,5}\to \Mm_{15}$ denote the moduli map. Then we have $\mu^{-1}(\mu(X))  =\mathrm{Aut}(\PP^5)X$ for a general $X\in \h{H}_{15,15,5}$. \end{theorem}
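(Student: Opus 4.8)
The plan is to show that the abstract-isomorphism fiber $\mu^{-1}(\mu(X))$ coincides with the projective-equivalence orbit $\Aut(\PP^5)X$ for general $X$. One inclusion is trivial: projectively equivalent curves are certainly abstractly isomorphic, so $\Aut(\PP^5)X\subset\mu^{-1}(\mu(X))$. The content is the reverse inclusion, namely that if $Y\in\h{H}_{15,15,5}$ is abstractly isomorphic to $X$, then $Y=\sigma(X)$ for some $\sigma\in\Aut(\PP^5)$. The key is that the embedding line bundle $\Oo_X(1)$ must be intrinsically pinned down by the abstract curve $X$, up to the action of $\Aut(X)$; once this is established, an isomorphism $u:X\to Y$ carries $\Oo_Y(1)$ to a line bundle on $X$ projectively equivalent to $\Oo_X(1)$, and $u$ then descends to a projective transformation.

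First I would recall from Theorem \ref{main15} that a general $X\in\h{H}_{15,15,5}$ arises from case (A1): the residual series $\h{E}=|K_X(-1)|=g^4_{13}$ is base-point-free and birationally very ample, the dual curve $\ce\subset\PP^4$ lies on a smooth cubic scroll with $\ce\in|5H-2L|$, and $X$ has a plane model of degree $8$ with a single ordinary triple point, where lines through the triple point cut out a base-point-free $g^1_5$ while $X$ carries no $g^1_4$. The strategy is to argue that this geometric data is \emph{rigid}: for a general such $X$, the line bundle $\Oo_X(1)$ (equivalently, by residuation, the $g^4_{13}=|K_X(-1)|$) is the unique line bundle on $X$ of its numerical type with the stated properties, up to $\Aut(X)$. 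Concretely, I would show that the $g^2_8$ giving the plane model, the base-point-free $g^1_5$, and their interrelation are uniquely determined by the isomorphism class of $X$; this is exactly the kind of uniqueness-of-special-series statement that a dimension count combined with a Brill--Noether/monodromy argument delivers, and it is parallel in spirit to how Corollary \ref{z5.010} upgrades an abstract isomorphism to a projective one via uniqueness of the defining pencils.

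With the uniqueness of $\Oo_X(1)$ (up to $\Aut(X)$) in hand, the finish is formal. Given $Y\cong X$ via $u$, the pullback $u^*\Oo_Y(1)$ is a very ample line bundle on $X$ of degree $15$ and dimension $5$ of the same geometric type as $\Oo_X(1)$; by uniqueness there is $\psi\in\Aut(X)$ with $\psi^*(u^*\Oo_Y(1))\cong\Oo_X(1)$. The composition $u\circ\psi:X\to Y$ then pulls $\Oo_Y(1)$ back to $\Oo_X(1)$, hence induces a linear isomorphism $H^0(Y,\Oo_Y(1))\to H^0(X,\Oo_X(1))$ of the two spaces of global sections, which in turn yields $\sigma\in\Aut(\PP^5)$ with $\sigma(X)=Y$ restricting to $u\circ\psi$ on $X$. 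This gives $\mu^{-1}(\mu(X))\subset\Aut(\PP^5)X$ and completes the proof.

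The main obstacle I anticipate is precisely the uniqueness step: proving that a general $X\in\h{H}_{15,15,5}$ carries a \emph{unique} linear system inducing its embedding, equivalently a unique $g^2_8$ of the requisite type (or equivalently a unique $g^4_{13}=|K_X(-1)|$ up to automorphisms). A naive Brill--Noether count need not force uniqueness since $\rho$ can be positive, so I would instead exploit the rigid plane-model structure -- one ordinary triple point and the canonically determined $g^1_5$ with no competing $g^1_4$ -- together with a dimension comparison between $\h{H}_{15,15,5}/\Aut(\PP^5)$ and the image of $\mu$, showing the generic fiber of $\mu$ has the expected dimension $\dim\Aut(\PP^5)/|G|$ so that it cannot be larger than a single orbit. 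Controlling the possibility of finitely many distinct embeddings (i.e. ruling out several projectively inequivalent $g^2_8$'s on the same abstract curve) is the delicate point, and it is here that the detailed surface classification of Theorem \ref{main15} and a monodromy/irreducibility argument for the relevant parameter space must be invoked.
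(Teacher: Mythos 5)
Your reduction --- pin down $\Oo_X(1)$ intrinsically (equivalently, by residuation, the $g^2_8$ giving the plane model), then promote an abstract isomorphism to a projective one using linear normality --- is exactly the skeleton of the paper's argument, and your formal finish is fine. The genuine gap is the step you yourself flag as delicate: the uniqueness of the $g^2_8$ on a general $X$. You leave it unproved and suggest it should follow from a dimension comparison (generic fiber of $\mu$ of dimension $\dim\Aut(\PP^5)$) combined with a monodromy/irreducibility argument. These tools cannot work, because they are insensitive to the difference between one orbit and finitely many orbits: every orbit $\Aut(\PP^5)Y$ has dimension $35$, so a fiber consisting of $N>1$ orbits passes every dimension test, and irreducibility of the Hilbert scheme (or of the Severi variety $\Sigma_{15,\h{M}}$ on $\FF_1$, which is what Theorem \ref{main15} provides) only yields transitivity of monodromy on the candidate linear series, not that there is a single one. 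The paper itself contains a cautionary instance of this phenomenon: $\HO{15,12,5}$ is irreducible of the expected dimension with $\rho=0$ (Proposition \ref{main12}), so general fibers of $\mu$ there have dimension exactly $35$ and monodromy permutes transitively the finitely many complete $g^5_{15}$'s of a general genus-$12$ curve --- yet such a curve carries many of them (the Castelnuovo count gives $132$), so $\mu^{-1}(\mu(X))$ is a union of many distinct $\Aut(\PP^5)$-orbits. Nothing in your proposal rules out the analogous possibility in genus $15$, namely several projectively inequivalent plane octic models of the same abstract curve.

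The paper closes this gap with a dedicated result, Theorem \ref{t1}: by an explicit analysis of the zero-dimensional scheme $Z=A\cup 2p$ attached to the plane model, using residual exact sequences and the Horace method with \cite[Lemma 34]{bgi} as the key input, it shows that a general $X$ has a unique $g^1_5$, exactly three base-point-free $g^1_6$'s, and finally a unique $g^2_8$. Only after that does the orbit statement follow, by lifting the resulting projective equivalence of plane models to an automorphism of the anticanonically embedded del Pezzo surface and hence to $\Aut(\PP^5)$. Some argument of this direct, curve-specific kind --- not a dimension count or a monodromy appeal --- is what your proposal is missing.
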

We review a few basic facts about residual schemes which we use in Theorem \ref{t1}, an essential step toward the proof of Theorem \ref{m15}.
\begin{remark}\label{residue}
Let $S$ be a projective variety, $D$ an effective Cartier divisor of $S$ and $Z\subset S$ a zero-dimensional scheme. The residual scheme $\Res_D(Z)$ of $Z$ is by definition the closed subscheme of $S$ with $\Ii_Z:\Ii_D$ as its ideal sheaf.
We have 
$\Res_D(Z)\subseteq Z$ and  
\begin{equation}\label{ele}
\deg (Z) =\deg(Z\cap D)+\deg(\Res_D(Z)).
\end{equation} 
We also have;
\begin{itemize}
\item
$\Res_D(Z)=Z$ if and only if $Z\cap D=\emptyset$.
\item
$\Res_D(Z)=\emptyset$ if and only if $Z\subset D$. 
\item
If $Z$ is a finite set then $\Res_D(Z) =Z\setminus Z\cap D$. 
\item
If $Z =Z_1\cup Z_2$ with $Z_1\cap Z_2=\emptyset$ then $\Res_D(Z) =\Res_D(Z_1)\cup \Res_D(Z_2)$. 
Hence to compute
$\Res_D(Z)$ we may look separately at the connected components of $Z$. 
\item Let   $D_1$ and $D_2$ be effective Cartier divisors of $S$. Call $D_1+D_2$ the sum of effective divisors. We have
\begin{enumerate}
\item[$\diamond$]
$D_1+D_2=D_1\cup D_2$ if and only if $D_1$ and $D_2$ have no common irreducible component. 
\item[$\diamond$] $\Res_{D_1+D_2}(Z) =\Res_{D_1}(\Res_{D_2}(Z))$. 
\item[$\diamond$] If $Z\subset D_1+D_2$, then $\Res_{D_1}(Z)\subset D_2$ and $\Res_{D_2}(Z)\subset D_1$.
\end{enumerate}
\item
Take a smooth point $p$ of $S$ and call $2p$ (resp. $3p$) the closed subscheme of $S$ with $(\Ii_p)^2$ (resp. $(\Ii_p)^3)$ as its ideal sheaf. 
We have 
\begin{enumerate}
\item[$\diamond$]
$(2p)_{\mathrm{red}} =(3p)_{\mathrm{red}} =\{p\}$, $\deg (2p) =1+\dim S$,$\deg (3p) =\binom{\dim S +2}{2}$. 
\item[$\diamond$] For a smooth point $p$ of $D$, $\Res_D(2p) = p$ and $\Res_D(3p) =2p$. 
\item[$\diamond$]
For a singular point $p$ of $D$, we have $2p\subset D$ and $\Res_D(2p) =\emptyset$.
\end{enumerate} 
\item Let $\Rr$ be a line bundle on $S$. We have the following exact sequence, usually called the residual exact sequence of $Z$ with respect to $D$:
\begin{equation}\label{eqt0}
0 \to \Ii_{\Res_D(Z)} \otimes \Rr(-D) \to \Ii_Z\otimes \Rr \to \Ii_{Z\cap D,D}\otimes \Rr_{|D}\to 0.
\end{equation}
\end{itemize}
For further details on  residual schemes, readers are advised to consult \cite[Section 2]{adgeo} and references therein.
\end{remark}
\begin{remark}\label{aa1}
(a) Let $g^r_d$, $r\ge 2$, be a base-point-free linear series on a smooth curve $X$ which is not compounded. Then its monodromy is the full symmetric group $S_d$
(\cite[p. 111]{ACGH} or \cite[pp. 85-86]{he})). Later in this section, this will be used in the following way.
Let $L_i$ ($1\le i\le s$) be linear series' on $X$, possibly incomplete.
Since the monodromy group of the $g^r_d$ is the full symmetric
group, there is a non-empty open subset $U_i$ of $g^r_d$ such that for each $V\in U_i$ all subsets of $V$ with the same cardinality impose the same number of conditions on
$L_i$.
Every element of $U_1\cap \cdots \cap U_s$ has the same property for all $L_i$, $1\le i\le s$.
\vskip 1.5pt
\noindent
(b) In the next theorem we use a well-known and strong tool known as Horace method. Let $Y\subset \PP^2$ be an integral curve  of degree $d$ and $v: X\to Y$ the normalization map. Let $Z\subset \PP^2$ be  the zero-dimensional scheme associated to $v$, i.e. the scheme such that $H^0(\PP^2,\Ii_Z(d-3)) \cong H^0(X,K_X)$. Take a base point free  linear system $\Tt$ on $X$ and take a general $E\in \Tt$. Set $m:=\deg(E)$ and $B:=v(E)$. Since $\Tt$ is base point free $E$ is formed by $m$ distinct points, $\#B=m$ and $B\cap Z=\emptyset$. Set $W:= B\cup Z$. Knowing the integer $h^1(\PP^2,\Ii_W(d-3))$ provides a key information on $\Tt$. For instance, if we take $\Ll:= v^\ast (\Oo_Y(1))$ and $\Tt =|\Ll|$ and take as $B$ the union of $d$ collinear smooth points of $Y$
we get that $h^0(X,\Ll)=3$ if and only if $h^1(\PP^2,\Ii_W(d-3)) =h^1(\PP^2,\Ii_Z(d-4)) =0$. To say something about the Brill-Noether theory on $X$,  e.g. in the next theorem, given a  plane curve $Y$ with $\deg Y=d=8$ having certain prescribed singularity types (one ordinary triple point and three nodes or cusps), we want to show that  there is a {unique }$g^1_5$ evincing the gonality of $X$, exactly three base-point-free $g^1_6$'s and that $Y$ {has} a unique degree $d$ plane model. For this, we need to give upper bounds on $h^1(\PP^2,\Ii_{B\cup Z}(d-3))$ {only from the information $\#B=m$.}  Specifically, to show  that $X$ has a unique $g^1_5$ and exactly three $g^1_6$ we take $B$ with $\#B \in \{5,6\}$ and assume $h^1(\PP^2,\Ii_{B\cup Z}(5)) >0$. By \cite[Lemma 34]{bgi} there is a line $L$ such that $\deg(L\cap (Z\cup B)) \ge 7$ and then use a residual exact sequence and the explicit form of scheme $Z$ to conclude the proof of the characterization (or description) of the base-point-free $g^1_t$'s, $t\le 6$, on $X$.  This part is a key step to prove that $X$ has a unique $g^2_8$, an essential step toward the proof of the description of the general fiber of the moduli map in genus $15$; Theorem \ref{m15}. This approach, the study of the cohomology group of a certain zero-dimensional scheme $W\subset \PP^2$ using low degree curves, say a line $L$, with $\deg(L\cap W)$ very high is usually called the ``Horace Method"; cf.  \cite{hirscho}. 
\end{remark}
\begin{theorem}\label{t1}
Fix a set $A\subset \PP^2$ such that $\#A=3$ and $A$ is not collinear. Fix $p\in \PP^2\setminus A$ such that $p$ is not contained in a line spanned by $2$ points in $A$.
Let $Y\subset \PP^2$ be an integral degree $8$ curve whose only singularities are either an ordinary node or an ordinary cusp at each point of $A$ with an ordinary triple point at $p$.
Let $v: X\to Y$ denote the normalization map. Then $X$ has genus $15$. Moreover
\begin{itemize}
\item[(a)] $X$ is $5$-gonal and the only $g^1_5$ on $X$ is induced by the pencil of lines through the ordinary triple point $p$.
\item[(b)] $X$ has exactly $3$ base-point-free $g^1_6$, which are induced by the pencils of lines through one of the points of $A$.
\item[(c)] Set $\Ll:= v^\ast (\Oo_Y(1))$. Then $h^0(X,\Ll)=3$ and $|\Ll|$ is the unique $g^2_8$ on $X$.
\end{itemize}
\end{theorem}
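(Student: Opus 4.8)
The plan is to translate every assertion into the cohomology of the adjoint (conductor) scheme $Z\subset\PP^2$ of the normalization $v\colon X\to Y$ and to run the Horace method of Remark \ref{aa1}. Since an ordinary node, an ordinary cusp and an ordinary triple point have $\delta$-invariants $1,1,3$, the arithmetic genus drops from $\binom{7}{2}=21$ to $21-(1+1+1)-3=15$, giving the genus assertion. The adjoint ideal at a node or an ordinary cusp is the full maximal ideal (a reduced point), while at the triple point $p$ it is $\Ii_p^2$; hence $Z=\{a_1,a_2,a_3\}\cup 2p$ with $\deg Z=3+3=6$ and $H^0(\PP^2,\Ii_Z(5))\cong H^0(X,K_X)$. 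Throughout I use the dictionary: for an effective divisor $E$ of degree $t$ whose support avoids the singular locus and $Z$, writing $B=v(E)$ (so $\#B=t$), Riemann--Roch together with $H^0(X,K_X-E)\cong H^0(\PP^2,\Ii_{Z\cup B}(5))$ gives $h^0(\Oo_X(E))\ge 2$ if and only if $h^1(\PP^2,\Ii_{Z\cup B}(5))>0$. I also record the elementary bound $\deg(L\cap Z)\le 3$ for every line $L$, with equality exactly for the fixed line $\overline{pa_i}$, contributing $2$ from $2p$ and $1$ from a single $a_i$ (using that $p$ lies on no line through two points of $A$ and that $A$ is not collinear).

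For (a), the pencil of lines through $p$ cuts out, after removing the three fixed branches over $p$, a base-point-free $g^1_5$, so $X$ has gonality $\le 5$. A $g^1_4$ together with this $g^1_5$ would share no common involution (as $\gcd(4,5)=1$), so the Castelnuovo--Severi inequality forces $15=g\le (4-1)(5-1)=12$, a contradiction; hence $X$ is exactly $5$-gonal. For uniqueness, let $|E|$ be any base-point-free $g^1_5$ and take $B=v(E)$ for general $E$. Then $h^1(\PP^2,\Ii_{Z\cup B}(5))>0$, so by \cite[Lemma 34]{bgi} there is a line $L$ with $\deg(L\cap(Z\cup B))\ge 7$, whence $\#(L\cap B)\ge 7-\deg(L\cap Z)\ge 4$. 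If $\#(L\cap B)=4$ then $\deg(L\cap Z)=3$ and $L=\overline{pa_i}$ is fixed; a fixed line meets $Y$ in finitely many points and cannot contain four points of a general moving fibre, a contradiction. Thus $B\subset L$; if $L$ passed through two points of $A$ it would again be fixed, so $L$ runs through $p$, i.e. $|E|$ is the pencil through $p$, proving (a).

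Part (b) is the same analysis with $t=6$, where $\#(L\cap B)\ge 7-\deg(L\cap Z)\ge 4$, and I rule out the borderline cases: $\deg(L\cap Z)=3$ forces the fixed line $\overline{pa_i}$ (no four moving points); a line through $p$ carries at most the five residual points of the octic, so it cannot contain six points of $B$, and the case of five points of $B$ on a moving line through $p$ would make these a fibre of the unique $g^1_5$, forcing the sixth point to be a base point of $|E|$, contrary to base-point-freeness; and a line through two points of $A$ is fixed. The only surviving configuration is $B\subset L$ with $L$ a moving line through a single $a_i$, i.e. the residual $g^1_6$ of the pencil of lines through $a_i$. These three pencils are base-point-free and pairwise distinct since a base-point-free $g^1_6$ determines its centre, so there are exactly three.

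For (c), taking $B$ to be eight collinear smooth points of a general line reduces $h^0(X,\Ll)=3$ to $h^1(\PP^2,\Ii_Z(4))=0$; this holds because $Z$ imposes independent conditions on quartics (no line meets $Z$ in degree $\ge 6$, as $\deg(L\cap Z)\le 3$), so $|\Ll|$ is a birationally very ample $g^2_8$. To prove uniqueness I first exclude a compounded $g^2_8$: a degree $2$ factorization $X\to C'$ onto a plane quartic ($g(C')\le 3$) is impossible against the $g^1_5$ by Castelnuovo--Severi ($15=g\le 2\cdot 3+(2-1)(5-1)=10$, and no common involution since $\gcd(2,5)=1$), while a degree $4$ factorization would produce the nonexistent $g^1_4$. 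Hence any $g^2_8$ is birationally very ample, giving a plane octic $Y'$ with $\delta=6$; every singular point has multiplicity $\le 3$ (else a $g^1_{\le 4}$ appears), a triple point induces the unique $g^1_5$, and double points induce $g^1_6$'s among the three of (b), so the $\delta$-budget together with ``exactly three $g^1_6$'' forces $Y'$ to have precisely one triple point and three double points, as $Y$ does. I expect the final identification $\Ll=\mathcal M$ to be the main obstacle: one has $\Ll=F+P$ and $\mathcal M=F+P'$ with $|F|$ the common $g^1_5$ and $P,P'$ the branch divisors over the two triple points, and since $X$ has no $g^1_3$ these degree-$3$ divisors are rigid ($h^0(\Oo_X(P))=1$); combining this with the three relations coming from the $g^1_6$'s one must conclude $P=P'$. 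This rigidity endgame, together with the exclusion of the borderline Horace configurations via residual exact sequences, is the delicate technical heart of the proof.
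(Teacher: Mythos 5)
Your overall framework coincides with the paper's own (Remark \ref{aa1}): the conductor scheme $Z=A\cup 2p$, the dictionary $h^0(\Oo_X(E))\ge 2\Leftrightarrow h^1(\PP^2,\Ii_{Z\cup v(E)}(5))>0$, and line-detection via \cite[Lemma 34]{bgi}. Your part (a) is correct, and in fact more direct than the paper's treatment: since $\deg (Z\cup B)=11=2\cdot 5+1$ the lemma applies outright, whereas the paper manufactures the line with $\deg (L\cap (Z\cup B))\ge 7$ by residuating against a conic through $B$. But this is exactly where your part (b) breaks: for $\#B=6$ one has $\deg (Z\cup B)=12>2\cdot 5+1$, outside the lemma's range, and the implication ``$h^1>0$ implies seven points on a line'' is false in general at this degree (twelve points on a smooth conic satisfy $h^1(\PP^2,\Ii (5))>0$ yet have at most two points on any line). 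The conclusion can be salvaged here only because no conic contains $Z$ (a conic containing $2p$ is singular at $p$, hence a union of two lines through $p$, which meet $A$ in at most two points); the paper instead does this work with a residual exact sequence against a conic through five of the six points of $B$ --- precisely the step your ``same analysis'' skips.

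The serious gap is the uniqueness in (c), which you yourself flag as unproven. Two things are missing. First, your classification claim is false as stated: the $\delta$-budget $\delta =6$ together with ``exactly three base-point-free $g^1_6$'' only bounds the number of double points of the second model $Y'$ above by three; it does not exclude, say, an ordinary triple point plus a single double point of $\delta$-invariant $3$, or a non-ordinary triple point with $\delta \ge 4$ and at most one double point. Second, even granting the classification, the rigidity endgame is left open, and note that it genuinely needs at least \emph{two} double points on $Y'$: writing $\Aa_1,\Aa_2,\Aa_3$ for the $g^1_6$'s and $D_i:=\Ll -\Aa_i$, $D'_i:=\Mm -\Aa_i$ for the effective degree-$2$ differences, two indices give $D_i+D'_j\sim D_j+D'_i$, hence $D_i+D'_j=D_j+D'_i$ as divisors (a degree-$4$ class with $h^0\ge 2$ would contradict $5$-gonality), and disjointness of the supports of $D_i$ and $D_j$ then forces $D_i=D'_i$, i.e. $\Mm \cong \Ll$; with at most one double point available there is no such relation to exploit. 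The paper avoids all of this and proves uniqueness by a different mechanism, the monodromy/uniform-position principle of Remark \ref{aa1}(a): for a general $U=v(V)$, $V\in |\Mm|$, assuming no three points of $U$ are collinear, a dimension count on conics (a conic $D_U$ maximizing $\#(D_U\cap U)$ meets $Z$ in degree at most $4$, since the conics meeting $Z$ in degree $5$ form only a one-dimensional family) combined with two residuations yields a contradiction, and monodromy then upgrades one collinear triple to all eight points of $U$ being collinear, i.e. $\Mm \cong \Ll$. To finish along your route you would have to complete the singularity classification of $Y'$, or else import this monodromy argument.
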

\begin{proof}
$X$ has genus $g=15$ by the assumption that $Y$ has $3$ ordinary nodes or ordinary cusps at points in $A$ and an ordinary triple point at $p$.
Let $2p$ the closed subscheme of $\PP^2$ with $(\Ii_p)^2$ as its ideal sheaf. We have $\deg (2p)=3$ and $(2p)_{\mathrm{red}} =\{p\}$. Set and fix $Z:=A \cup 2p$ once and for all; $\deg (Z)=6$. We note that $Z$ is the conductor of the normalization map,
i.e. the complete linear system $|K_X|$ is induced by $|\Ii_Z(5)|$. Thus to prove that $h^0(X,\Ll)=3$ it is sufficient to show that $Z$ imposes $6$ independent conditions on $|\Oo_{\PP^2}(4)|$, i.e.  $h^1(\PP^2,\Ii_Z(4)) =0$. 
Recall that for a  degree $6$ zero-dimensional scheme $F\subset \PP^2$,  $h^1(\PP^2,\Ii_F(4))=0$ if and only if
$F$ is not contained in a line; cf. \cite[Lemma 34]{bgi}. 
Therefore we have $h^1(\PP^2,\Ii_Z(4)) =0$ by the assumption on $Z$ and hence $h^0(X, \Ll)=3$. We can also deduce easily that the line bundle $\Rr$ described in (a) 
and the $3$ line bundles $\Ll_1,\Ll_2,\Ll_3$ described in (b) are complete pencils.
After the characterization of the (unique) $g^1_5$ and the (only) three $g^1_6$'s in the following part (a) and (b), the uniqueness of the $g^2_8$ is shown in (c). 

\vni
For a line $L\subset \PP^2$, $\deg (L\cap Z)\le 3$ by our assumptions on $A\cup \{p\}$. Note that
\begin{enumerate}
\item[(i)]
 $\deg (L\cap Z)=3$  if and only if $L$ is one of the $3$ lines, say $R_1,R_2,R_3$, containing $p$ and one of the points of $A$. 
 \item[(ii)] $\deg (L\cap Z)=2$ if and only if either $L$ is one of the lines, $L_1,L_2,L_3$, containing $2$ of the points of $A$ or $p\in L$ and $L\notin \{R_1,R_2,R_3\}$. 
 \end{enumerate}
 Recall that a conic $D\subset \PP^2$ contains $2p$ if and only $D$ is singular at $p$ if and only if $D$ is a union of two lines intersecting at $p$ or a double line through $p$.
Thus 
\begin{equation}\label{h1z}
\begin{cases}
h^0(\PP^2,\Ii_Z(2)) =0,\\
h^1(\PP^2,\Ii_Z(2)) =0 \mathrm{ ~since ~} \deg (Z)=6 \mathrm{ ~and 
~therefore }\\
h^1(\PP^2,\Ii_Z(t)) =0 \textrm{ for all } t\ge 3.
\end{cases}
\end{equation}
\noindent
(a) $X$ has no pencil of degree four or less by  the Castelnuovo-Severi inequality \cite[Theorem 3.5]{Accola1} and  hence $X$ is $5$-gonal with a $g^1_5$ cut out by lines through $p$.  Take a complete base-point-free pencil $\Rr$ on $X$ such that $\deg (\Rr) = 5$.  Fix a general $E\in \Rr$ and set $B:= v(E)$. We note that 
\begin{enumerate}
 \item[(ai)]
$B\cap Z=\emptyset$ since $\Rr$ is base-point-free and $E$ is general, 
 \item[(aii)]
$h^1(\PP^2,\Ii_{Z\cup B}(5)) >0$ since $|K_X|$ is induced by $|\Ii_Z(5)|$ and $\Rr$ is a pencil, 
 \item[(aiii)]$h^1(\PP^2,\Ii_{Z\cup B'}(5)) =0$ for all $B'\subsetneq B$ since $\Rr$ is complete and base-point-free, 
  \item[(aiv)]$|\Ii_B(2)|\ne \emptyset$ since $\#B=5$. 
\end{enumerate}
Fix a general $C\in |\Ii_B(2)|$. 
Set $M:= C\cap (Z\cup B)$.
Since $B\subset C$, we ha{ve $\Res_C(Z\cup B) =\Res_C(Z)$. We consider the residual exact sequence
\begin{equation}\label{eqt1}
0 \to \Ii_{\Res_C(Z)}(3)\to \Ii_{Z\cup B}(5) \to \Ii_{M,C}(5) \to 0
\end{equation} 
of $Z\cup B$ with respect to $C$. Since $\Res_C(Z)\subseteq Z$ and  $h^1(\PP^2,\Ii_Z(2)) =0$ by \eqref{h1z}, we have $h^1(\PP^2,\Ii_{\Res_C(Z)}(3)) =0$. Hence by (aii), the long cohomology sequence of \eqref{eqt1}  yields $$h^1(C,\Ii_{M,C}(5)) >0.$$ 

We first assume that $C$ is smooth. Since $\deg M\le\deg (Z\cup B) =11$,
$\deg (\Oo_C(5)) =10$ and $C\cong \PP^1$, we have $h^1(C,\Ii_{M,C}(5))=0$, a contradiction. 

We next assume that $C$ is singular (a priori even a double line), say $C=C_1+C_2$ with  $\deg (C_1\cap (Z\cup B)) \ge \deg (C_2\cap (Z\cup B))$ if $C_1\ne C_2$. 

Since $M\subset C_1+C_2$, we have $\Res_{C_1+C_2}(M) =\emptyset$, 
$\Res_{C_2}(\Res_{C_1}(M)) =\emptyset$ and hence $\Res_{C_1}(M)\subset C_2$.  By the basic property
\eqref{ele} on residual schemes in Remark \ref{residue}, $\deg (\mathrm{Res}_{C_1}(M))
=\deg (M) -\deg (M\cap C_1)$.  Since $\deg (M)\le 11$ and $\deg (M\cap C_1)\ge \deg(M\cap C_2)$ by assumption, we have $\deg (M\cap C_2)\le 5$. Since $\Res_{C_1}(M)\subset C_2$ and $\Res_{C_1}(M)\subseteq M$,
we have $$\deg (\Res_{C_1}(M)) =\deg (\Res_{C_1}(M)\cap C_2) \le \deg (M\cap C_2) \le 5.$$
 Since $h^1(C,\Ii_{M,C}(5)) >0$, we have $h^1(\PP^2,\Ii_M(5))>0$ by usual cohomology computation of the  sequence $0\rightarrow \Ii_C(5)\rightarrow \Ii_M(5) \rightarrow\Ii_{M,C}(5)\rightarrow 0$. Consider the residual exact sequence of $M$ with respect to the line $C_1$;
\begin{equation}\label{eqt2}
0 \to \Ii_{\mathrm{Res}_{C_1}(M)}(4)\to \Ii_M(5) \to \Ii_{C_1\cap M,C_1}(5)\to 0.
\end{equation}
Since $\deg (\Res_{C_1}(M)) \le 5$, we have $h^1(\PP^2,\Ii_{\Res_{C_1}(M)}(4)) =0$ \cite[Lemma 34]{bgi}. Thus the long cohomology exact sequence of \eqref{eqt2} and $h^1(\PP^2,\Ii_M(5))>0$ yield $$h^1(C_1,\Ii_{C_1\cap M}(5)) >0 ~ \textrm{implying} ~\deg (C_1\cap M)\ge 7.$$  From this we have the following two cases; 
\[
\begin{cases} B\subset C_1 \mathrm{ ~and ~}\deg (Z\cap C_1)\ge 2 \mathrm{~ or ~} \\
\#(B\cap C_1)=4 \mathrm{ ~and~ } \deg (Z\cap C_1)=3.
\end{cases} 
\]
The latter possibility is excluded for a general $E\in |\Rr|$, because only $3$ lines, $R_1,R_2,R_3$
intersect $Z$ in a degree $3$ schemes. Thus $B\subset C_1$ and $\deg (Z\cap C_1)=2$. Since there are only $3$ lines, $L_1,L_2,L_3$, intersecting $Z$ in  a degree $2$ scheme and not containing $p$, we get $p\in C_1$, concluding the proof of (a). 
\vni
(b)
Fix a base-point-free line bundle $\Mm$ on $X$ such that $\deg (\Mm)=6$. Since $X$ has a unique $g^1_5$ by part (a), we have $h^0(X,\Mm)=2$. To see this, $X$ has no birationally very ample $g^2_6$
by the uniqueness of $g^1_5$ or by genus reason. $X$ does carry a compounded $g^2_6$ either since $C$ is neither trigonal nor bi-elliptic. 

Fix a general $G\in |\Mm|$ and set $F:= v(G)$. Since $\Mm=g^1_6$ is base-point-free and complete, we have $h^1(\PP^2,\Ii_{Z\cup F}(5))>0$
and $h^1(\PP^2,\Ii_{Z\cup F'}(5)) =0$ for all $F'\subsetneq F$. Choose any $F'\subset F$ formed by $5$ points and  take $D\in |\Ii_{F'}(2)|\neq\emptyset$. Set $N:= D\cap (Z\cup F)$. Assume $D$ is smooth thus no $3$ among $F'$  are collinear. 
Consider the residual exact sequence of  $Z\cup F$ with respect to $D$;
\begin{equation}\label{eqt3}
0\to \Ii_{\mathrm{Res}_D(Z\cup F)}(3)\to \Ii_{Z\cup F}(5) \to \Ii_{N,D}(5)\to 0.
\end{equation}
Since $\#(F\setminus F\cap D)\le 1$, $\deg{\mathrm{Res}_D(Z\cup F)}\le 7$. Since no  $5$ among $\mathrm{Res}_D(Z\cup F)$ is collinear, $h^1(\PP^2,\Ii_{\mathrm{Res}_D(Z\cup F)}(3))=0$ by \cite[Lemma 34]{bgi}.}
 Thus the long cohomology sequence from  \eqref{eqt3} gives $h^1(D,\Ii_{N,D}(5))>0$.  
Since $D\cong \PP^1$, $\deg(\Oo_D(5)) =10$ and $h^1(D,\Ii_{N,D}(5)) >0$, we get $$\deg(N)\ge 12.$$
On the other hand, we have $\deg (D\cap Z)\le 5$. To see this, we assume 
$\deg (D\cap Z)\ge 6$ and hence $D$ is smooth. From the exact sequence $0\to\Ii_{D\cap Z, D}(2)\to\Oo_D(2)\to\Oo_{D\cap Z}(2)\to 0$ we have $h^1(D,\Ii_{D\cap Z,D}(2)) >0$ following from $h^0(D,\Oo_D(2))=5$ and $h^0(\Oo_{D\cap Z})=\deg (D\cap Z)\ge 6$. From the 
exact sequence $0\rightarrow\Ii_{D}(2)\rightarrow \Ii_{D\cap Z}(2)\rightarrow\Ii_{D\cap Z,D}(2)\rightarrow 0$
and by $h^i(\PP^2,\Ii_D(2))= h^i(\PP^2,\Oo_{\PP^2}) =0~ (i\ge 1)$, we get $h^1(\PP^2,\Ii_{D\cap Z}(2)) >0$.
Since $D\cap Z\subseteq Z$, we get $h^1(\PP^2,\Ii_Z(2)) >0$
contrary to \eqref{h1z} concluding $\deg (D\cap Z)\le 5$, thus $$\deg (N)\le 11.$$ 

This contradiction shows that $D$ is not smooth. Set $D=D_1+D_2$. Exactly as  in step (a) we may prove the existence of a line $D_1$ with $\deg (D_1\cap N)\ge 7$. Since $\Mm$ is not induced by the pencil of lines through $p$, $p\notin D_1$. For a general $G\in|\Mm|$ we have $D_1\notin \{L_1,L_2,L_3\}$. Thus $F\subset D_1$ and $D_1$ contains one of the points of $A$, concluding the proof of  (b).

\vni
(c) We only need to prove the uniqueness part. Take a  line bundle $\Nn$ on $X$ such that $h^0(\Nn)\ge 3$ and $\deg (\Nn)\le 8$. Since $X$ has only finitely many base-point-free 
$g^1_6$'s by part (b), $\deg(\Nn)=8, h^0(X,\Nn)=3$ and $\Nn$ is base-point-free.
Part (a)  implies that $|\Nn|$ is not compounded; if it were then either $X$ is $4$-gonal or a double covering of a smooth plane curve of degree $3$, which may be excluded by the Castelnuovo-Severi inequality. We want to apply Remark \ref{aa1}(a) to the linear series $|\Nn|$. 

Fix a general $V\in |\Nn|$ and set $U:= {v}(V)$. To conclude the proof we need to prove that $U$ is formed by $8$ collinear points. For a general $V$ we have
$Z\cap {v}(V) =\emptyset$. Since $h^0(X,\Nn)=3$, we have
\begin{equation}\label{net}
h^1(\PP^2,\Ii _{Z\cup U}(5))\ge 2.
\end{equation}

\vni
Observation 1:
\begin{enumerate}
\item[(0)] Since $V$ is general, Remark \ref{aa1}(a) implies that if $\#(U\cap L)\ge 3$ for some line $L$, then $U\subset L$, concluding the proof. 
\item[(i)] Thus from now on we may assume that {\it no 3 points among $U$ are collinear}. 
\item[(ii)] Suppose $\#(U\cap D)\ge 6$ for some conic $D$, i.e. $U\cap D$ fails to impose independent conditions on $|\Oo_{\PP^2}(2)|$. 
Hence by Remark \ref{aa1}(a) we have $U\subset D$.
\end{enumerate}
\vni
Observation 2:
\begin{enumerate}
\item[(0)]
The zero-dimensional scheme $Z =2p\cup A$ has degree $6$ and is not contained in a conic;  just because $A$ is not formed by $3$ collinear points and any conic containing $Z$ is singular at $p$.
\item[(i)] We set 
$\Zz:=\{D\in |\Oo_{\PP^2}(2)| | \deg Z\cap D=5\}, \Zz_1:=\{D\in \Zz | 2p\subset D\},$  $\Zz_2:= \Zz\setminus \Zz_1.$
Each $D\in \Zz_1$ is singular at $p$ and hence $\#\Zz_1=3$;  each $D\in \Zz_1$ is the union of two lines through $p$ containing one of the points of $A$.
For $D\in \Zz_2$, 
$D\cap Z=A\cup w$, where $w$ is degree $2$ connected zero dimensional subscheme with $p$ as its reduction.
Since no conic contains $Z$, $A\cup w$ uniquely determines $D$. Thus $\Zz_2$ is a $1$-dimensional family and $\Zz$ is an algebraic family of dimension $1$.
\end{enumerate}
\vni

Given $U={v}(V)$, let $D_U\in |\Oo_{\PP^2}(2)|$ be such that $\#(D_U\cap U)$ is maximal. Note that $\#(U\cap D_U)\ge 5$ since $\dim |\Oo_{\PP^2}(2)| =5$. By Observation 1(i), $D_U$ is a smooth conic. Bezout gives $\#(D_U\cap v(X)) \le 16$.  Thus $D_U$ contains at most one other set $U'$ with $\#U' =8$ with $U'=\#v(V') =8$ for some $V'\in |\Nn|$ and $U'\cap U=\emptyset$.
\[
\begin{cases}
 \#(D_U\cap U) =5 \textrm{ or }
 \\ U\subset D_U.
\end{cases}
\]
Now we know that there are two dimensional family of conics $$\Uu:=\{D_U|
U={v}(V), V\in\Nn, \#(D_U\cap U)\ge 5\}\subset|\Oo_{\PP^2}(2)|.$$
On the other hand, the family of conics $$\Zz=\{D|\deg(D\cap Z)=5\}\subset|\Oo_{\PP^2}(2)|$$
moves only in one dimensional family by  Observation 2(i). Hence  for general $D_U\in\Uu$, we have 
\begin{equation}\label{le4}\deg (Z\cap D_U)\le 4.
\end{equation} 
(c1) Assume $\#(D_U\cap U)=5$ for general $D_U\in\Uu$. Recall that by  Observation 1(i), $D_U$ is a smooth conic and  we set $W:= U\setminus D_U\cap U$  consisting of $3$ non-collinear points. Note that $$\deg (D_U\cap (Z\cup U))=\deg (D_U\cap Z)+\deg (D_U\cap U) \le 9$$ and hence $\deg\Ii_{D_U\cap (Z\cup U),D_U}(5)\ge 1$ implyig
 $h^1(D_U,\Ii_{D_U\cap (Z\cup U),D_U}(5)) =0$.
  Consider the residual exact sequence of $Z\cup U$ with respect to $D$:
\begin{equation}\label{eqt4}
0\to \Ii_{\mathrm{Res}_D(Z\cup U)}(3)\to \Ii_{Z\cup U}(5) \to \Ii_{D\cap (Z\cup U),D}(5)\to 0.
\end{equation}
From the long cohomology exact sequence of \eqref{eqt4} and \eqref{net} we have $$h^1(\PP^2,\Ii_{\mathrm{Res}_D(Z\cup U)}(3))\ge 2$$ and hence 
\begin{equation}\label{zw}
h^1(\PP^2,\Ii_{Z\cup W}(3))\ge h^1(\PP^2,\Ii_{\mathrm{Res}_D(Z\cup U)}(3))\ge 2.
\end{equation}
 Take a line $L$ containing $2$ points of $W$ and set $\{o\}:= W\setminus W\cap L$.  We consider the residual exact sequence of $Z\cup W$ with respect to $L$; note that $\Res_L(Z\cup W)=\Res_L(Z)\cup\{o\}$. Hence we have the exact sequence
\begin{equation}\label{eqanew2}
0 \to \Ii_{\Res_L(Z)\cup \{o\}}(2)\to \Ii_{Z\cup W}(3) \to \Ii_{(Z\cup W)\cap L,L}(3) \to 0.
\end{equation}
Since $\Res_L(Z)\subseteq Z$ and $h^1(\PP^2, \Ii _Z(2)) =0$ (by \eqref{h1z}), we have 
$$h^1(\Ii_{\Res_L(Z)\cup \{o\}}(2))\le 1.$$ From the long cohomology exact sequence of \eqref{eqanew2}  together with \eqref{zw}, we have
$h^1(L,\Ii_{(Z\cup W)\cap L,L}(3))\ge 1$. Thus $\deg ((Z\cup W)\cap L)) \ge 5$. Since $\deg (W\cap L) =2$, we obtain $\deg (Z\cap L)\ge 3$. Thus $L$ is one of the $3$ lines spanned by $p$ and one of the points of $A$, say $L=R_1$; remember that the three lines $R_i, i=1,2,3$
does not depend on the choice of  $V\in|\Nn|$ and $U=v(V)$. 
On the other hand, since $R_1\cap v(X)$ is a finite set, for a general $V\in |\Nn|$ we have $v(V)\cap R_1=U\cap R_1=\emptyset$. However 
we took the line $L$ containing two of the points of $W\subset U$, a contradiction.
\vni
(c2) Assume $U\subset D_U$. 
In this case we have $\mathrm{Res}_{D_U}(Z\cup U)\subseteq Z$ and hence $h^1(\PP^2,\Ii_{\mathrm{Res}_{D_U}(Z\cup U)}(3)) =0$ since $h^1(\PP^2, \Ii_Z(3))=0$ by \eqref{h1z}. The long cohomology exact sequence of \eqref{eqt4} together with \eqref{net}, i.e. $h^1(\PP^2,\Ii _{Z\cup U}(5))\ge 2$ 
yields 
$$h^1({D_U},\Ii_{{D_U}\cap (Z\cup U),{D_U}}(5)) \ge 2.$$  Recalling  $\deg (Z\cap {D_U})\le 4$,
on a smooth conic ${D_U}$ we have $\deg ({D_U}\cap (Z\cup U)) \le 12$, $\deg\Ii_{{D_U}\cap (Z\cup U),{D_U}}(5)\ge -2$ and hence
$h^1({D_U},\Ii_{{D_U}\cap (Z\cup U), {D_U}}(5))\le 1,$
a contradiction.
\vni
{\bf Conclusion}: (c1) and (c2) show that there is a subset $B\subset U$ with 
$\#B=3$ and $B$ is collinear, i.e. $v^{-1}(B)$ fails to impose independent conditions on $\Ll=v^*(\Oo_{\PP^2}(1))$, hence any subset $B'\subset U$ with $\#B'=3$ is collinear by Remark \ref{aa1} and we are done. 
\end{proof}
\noindent
{\it{Proof} of Theorem \ref{m15}}: Recall that  a general element of $X\in\HO{15,15,5}$ has a plane model of 
degree eight with one ordinary triple point and three nodes as its only singularities; cf. part  (A1) in the proof of Theoerm \ref{main15}. We also recall that 
a curve with a plane model $C$ of degree $8$ with such prescribed singularities is embedded into $\PP^5$ as a smooth curve of degree $15$ and genus $g=15$ in the following way: 
\vni
(i) Blowing up $\PP^2$ at four (singular)  points in general position in $\PP^2$ and  then take the strict transformation $\w{C}$ of $C$ in $\PP^2_4$ under this blow up.
\vni
(ii) We then embed $\PP^2_4$ and $\w{C}$ by the anti canonical  system $|-K_{\PP^2_4}|=|(3;1^4)|$ to get a smooth del Pezzo $S\subset\PP^5$ and smooth $\w{C}\cong X\subset\PP^5$. 

Now we fix four points $A\subset\PP^2$ in general linear position. Let $C_i,~ 1\le i\le2$ be two plane curves of degree $8$ with 
one triple point and three nodes with $\mathrm{Sing}(C_i)=A$.
By Theorem \ref{t1}, we have the following equivalent conditions:
\begin{itemize}
\item[(a)] Two curves $X_i\subset\PP^5, i=1,2$ such that $X_i\cong\w{C}_i$ are isomorphic.
\item[(b)] Two singular plane models $C_i$ of $X_i$ are projectively equivalent under a projective motion of $\PP^2$ inducing a permutation on  the set $A\subset\PP^2$. 
\item[(c)] $X_i$ lies on a same smooth del Pezzo surface $S\subset\PP^5$.
\item[(d)] There exists  $\tau\in\Aut(S)$ such that $X_1=\tau(X_2)$. 
\end{itemize}
Note that for a smooth del Pezzo $S\subset\PP^5$ and  $\tau\in\Aut(S)$, there is $\beta\in \Aut(\PP^5)$ such that $\beta_{|S} =\tau$ since $S$ is anticanonically embedded in
$\PP^5$. Hence we have $\beta(X_2)=\tau(X_2)=X_1$.
\qed
\vspace{-12pt}
\section{Curves of genus $g=16$}
\subsection{Reducibility of $\HO{15,16,5}$}
The aim of this subsection is to prove the following reducibility result for  $\HO{15,16,5}$. 
                                                                                                                                                                                                                                                                                                                                                                                                                                                                                                                                                                                                                                                          
\begin{theorem}\label{main16}
$\HO{15,16,5}$ has $3$ irreducible components, $\Gamma _1$, $\Gamma _2$ and $\Gamma_3$, described as follows:

\begin{enumerate}
\item  $\dim \Gamma _1=68$, every $X\in \Gamma _1$ lies on a smooth quartic surface and is trigonal.
\item $\dim \Gamma_2=64$, every $X\in \Gamma _2$ lies on a smooth quartic surface and is pentagonal.
\item $\dim \Gamma_3=65$, every $X\in \Gamma_3$ is ACM, lies on a quintic surface,  is $6$-gonal and  $K_X\cong \Oo_X(2)$.\end{enumerate}
\end{theorem}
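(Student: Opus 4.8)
\emph{Strategy.} The plan is to reduce to linearly normal curves, use the coincidence $g=16=\pi_1(15,5)$ to force every $X$ onto a surface of degree $4$ or $5$, and then sort the curves by that surface. First I note that since $\pi(15,6)=13<16=g$, no smooth curve of degree $15$ and genus $16$ exists in $\PP^6$; hence no $X\in\HO{15,16,5}$ is an isomorphic projection from $\PP^6$, every $X$ is linearly normal with $h^0(\Oo_X(1))=6$, and $\HO{15,16,5}=\HL{15,16,5}$. Because $g$ equals the second Castelnuovo bound $\pi_1(15,5)$, the classification of extremal and sub-extremal curves (\cite[\S 3]{he}) shows every $X$ lies on an irreducible non-degenerate surface $S\subset\PP^5$ with $\deg S\in\{4,5\}$, and I would treat the two cases separately.

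\emph{Quartic surfaces.} If $\deg S=4$ then $S$ is a smooth rational normal scroll: by Remark \ref{cone1}(b)--(c) the Veronese surface and the cones are excluded by the parity of $\deg X=15$. Writing $X\in|aH+bL|$ and solving \eqref{sd}, i.e. $4a+b=15$ together with $p_a(X)=2(a-1)(7-a)=16$, I obtain exactly $(a,b)=(3,3)$ and $(a,b)=(5,-5)$; since the ruling gives $X\cdot L=a$, these realise $X$ as a curve of bidegree $(3,9)$ (trigonal) and $(5,5)$ (pentagonal) on $S\cong\PP^1\times\PP^1$. The Castelnuovo--Severi inequality \cite[Theorem 3.5]{Accola1} forbids any pencil of smaller degree on every smooth member (a coexisting $g^1_4$ and $g^1_5$ would give $g\le 12$), so the two families are trigonal, resp. pentagonal, throughout. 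Fibring the linear systems over the irreducible family $\h{S}(5)$ of scrolls and using \eqref{sdn}--\eqref{sf} gives irreducible families of dimensions $\dim\h{S}(5)+\dim|3H+3L|=29+39=68$ and $\dim\h{S}(5)+\dim|5H-5L|=29+35=64$. As $S$ can be reconstructed from $X$ (it is swept out by the lines spanned by the gonality pencil), the natural maps are generically finite and produce the components $\Gamma_1$ and $\Gamma_2$.

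\emph{Quintic surfaces.} If $\deg S=5$ the generic $S$ is a smooth quintic del Pezzo surface $S\cong\mathrm{Bl}_4\PP^2$, anticanonically embedded with $H=-K_S$. Writing $X\equiv a\ell-\sum b_ie_i$, the equations $\deg X=3a-\sum b_i=15$ and $X^2=2g-2-K_S\cdot X=45$ force, by Cauchy--Schwarz $(\sum b_i)^2\le 4\sum b_i^2$, the inequality $(a-9)^2\le 0$; hence $a=9$, all $b_i=3$, and $X\in|3H|=|-3K_S|$. Adjunction then yields $K_X=(K_S+X)|_X=2H|_X=\Oo_X(2)$, while the pencils $|\ell-e_i|$ cut out $g^1_6$'s, so $X$ is at most $6$-gonal. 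I would deduce that $X$ is ACM from the projective normality of $S$ together with the sequence $0\to\Oo_S(-3H)\to\Oo_S\to\Oo_X\to 0$ and the vanishing $H^1(S,\Oo_S(kH))=0$. A count $\dim\{\text{quintic del Pezzos in }\PP^5\}+\dim|3H|=35+30=65$, again generically finite onto its image, gives $\dim\Gamma_3=65$. The remaining quintic surfaces---singular del Pezzos and the cone over an elliptic normal quintic, on which the relevant curve is $X\in|3\bar h|$, a triple cover of an elliptic curve $E$---carry only lower-dimensional families which are flat limits of curves on smooth del Pezzo surfaces by Proposition \ref{b1} and Lemma \ref{se1}; they therefore lie in $\overline{\Gamma_3}$ and create no new component.

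\emph{Assembly and main obstacle.} Each of the three incidence varieties is a projective bundle over an irreducible base ($\h{S}(5)$, resp. the family of quintic del Pezzos, which is a single $\Aut(\PP^5)$-orbit), so $\Gamma_1,\Gamma_2,\Gamma_3$ are irreducible; each has dimension exceeding $\Xx(15,16,5)=60$, and the case analysis on $\deg S$ shows they exhaust $\HO{15,16,5}$. To see they are pairwise distinct components I combine three separating devices: lower semicontinuity of gonality rules out $\Gamma_2,\Gamma_3\subset\overline{\Gamma_1}$ and $\Gamma_3\subset\overline{\Gamma_2}$; the dimension inequalities $68>65>64$ forbid a larger family from lying in the boundary of a smaller one; and the closed condition $K_X\cong\Oo_X(2)$, which holds on $\overline{\Gamma_3}$ but fails for the general $(5,5)$-curve (there $K_X=(3,3)|_X\ne(2,4)|_X=\Oo_X(2)$), separates $\Gamma_2$ from $\Gamma_3$. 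The main obstacle is the quintic case: proving that every member of $\Gamma_3$ is exactly $6$-gonal requires ruling out a $g^1_5$, for which Castelnuovo--Severi alone is insufficient (it only gives $g\le 20$ for a $(g^1_5,g^1_6)$ pair), so a finer Brill--Noether argument in the spirit of Theorem \ref{t1} is needed; establishing ACMness for all members and controlling the singular quintic surfaces and the elliptic cone as genuine flat limits---rather than as separate components---via Proposition \ref{b1} and Lemma \ref{se1} is the other delicate ingredient.
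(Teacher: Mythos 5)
Your overall skeleton coincides with the paper's: linear normality from $\pi(15,6)=13<g$, the degree-$4$/degree-$5$ dichotomy from $g=\pi_1(15,5)$, the scrollar classes $(3,3)$ and $(5,-5)$ with dimensions $68$ and $64$, the identification $X\in|-3K_S|$ on a smooth quintic del Pezzo (your Cauchy--Schwarz derivation is a clean, self-contained version of the computation the paper borrows from Theorem \ref{main15}), and the pairwise separation of $\Gamma_1,\Gamma_2,\Gamma_3$ by gonality, dimension, and semicontinuity of $h^1(\Oo_X(2))$ are all correct. The genuine gap is in the quintic case, and it traces back to one missing idea: the paper's Lemma \ref{a1}, which shows that \emph{every} $X\in\HO{15,16,5}$ lying on a quintic surface $S$ is the scheme-theoretic complete intersection of $S$ with a cubic hypersurface (yielding, as byproducts, ACM-ness and $K_X\cong\Oo_X(2)$ for all such $X$, not only those on smooth del Pezzos). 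Without it, your assertion that curves on the elliptic cone ``lie in $\overline{\Gamma_3}$ \ldots by Proposition \ref{b1} and Lemma \ref{se1}'' is a non sequitur: Proposition \ref{b1} covers only \emph{normal rational} del Pezzo surfaces, not the (irrational) cone over an elliptic quintic, and Lemma \ref{se1} only degenerates the \emph{surfaces}; a flat family of surfaces does not by itself produce a flat family of curves on them. The paper bridges this exactly via the complete intersection property (Lemma \ref{se2}): writing $X=T\cap W$ with $T$ the cone and $W$ a cubic, transversality is open, so $S_t\cap W\in \Gamma_3$ for nearby smooth del Pezzos $S_t$ and $X$ is their flat limit. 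The gap is not cosmetic: by Lemma \ref{aob1} the elliptic-cone family $\Delta$ has dimension $60=\Xx(15,16,5)$, which is precisely the lower bound for the dimension of any component, so $\Delta$ cannot be dismissed as ``lower-dimensional''; as written, your argument leaves open that $\overline{\Delta}$ is a fourth component, i.e.\ the count of components is not established.

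The second gap, which you flag yourself---ruling out a $g^1_5$ on a general $X\in\Gamma_3$---is resolved by the paper using the very degeneration you tried to set aside, not by a finer Brill--Noether analysis in the spirit of Theorem \ref{t1}. Every $X\in\Delta$ admits a degree-$3$ map to an elliptic curve, so Castelnuovo--Severi applied to that map together with a hypothetical $g^1_5$ forces $g\le 3+(3-1)(5-1)=11<16$; hence curves in $\Delta$ are exactly $6$-gonal (Lemma \ref{aob1}(ii)). Once $\Delta\subset\overline{\Gamma_3}$ is known, lower semicontinuity of gonality gives gonality at least $6$ for a general member of $\Gamma_3$, which combined with the $g^1_6$'s cut by the pencils $|\ell-e_i|$ gives exactly $6$ (Proposition \ref{aa13}). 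So the elliptic cone is not a nuisance to be controlled but the key tool for both of your missing steps; likewise the ACM and $K_X\cong\Oo_X(2)$ claims for \emph{all} members of $\Gamma_3$ (item (3) of the theorem) come uniformly out of Lemma \ref{a1}, whereas your vanishing argument covers only the curves sitting on smooth del Pezzo surfaces.
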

\vni

Take $X\in \HO{15,16,5}$. 
By the Castelnuovo's genus bound, $\pi(15,6)=13$ and $\pi(15,5)=18$, hence  $X$ is linearly normal. Since $\pi_1(15,5)=16=g<\pi (15,5)$, $X\subset S\subset\PP^5$ where $S$ an irreducible surface with $4\le \deg S\le 5$ by \cite[Theorem 3.15]{he}. We start making observations for the case $\deg S=4$.
\vni
(A) $\deg S=4$ case:

We may assume that $S$ is a smooth rational normal scroll by Remark \ref{cone1} (c). 
Let $X\in |aH+bL|$ on $S$. By solving \eqref{sd} - the degree and genus formula  for $d=15$ and $p_a(X)=16$ - we get $(a,b)\in \{(3,3), (5,-5)\}$. 
Thus we have an irreducible family $\Gamma(a,b)$ of smooth curves in $\PP^5$ lying on smooth  quartic surface scrolls for each $(a,b)\in \{(3,3), (5,-5)\}$ with 
 \begin{align*}
 \dim\Gamma(a,b)&=\dim|aH+bL|+\dim\h{S}(r)\\&=\frac{a \left(a +1\right) (r-1)}{2}+\left(a +1\right) \left(b +1\right)-1+(r+3)(r-1)-3
 \end{align*}
  by \eqref{sdn},  \eqref{sf} and hence
 \begin{equation}\label{dimgg}\dim\Gamma(3,3)=68>\h{X}(15,16,5), 
 \dim\Gamma(5,-5)=64>\h{X}(15,16,5).
 \end{equation}
 For simpler notation, we set 
 \begin{equation*}\label{dimg}
 \Gamma _1:={\Gamma}(3,3), \Gamma_2:= \Gamma(5,-5).
 \end{equation*}
  \begin{remark}\label{ab1}
 $X\in \Gamma _1$ has a unique  $g^1_3$ by the Castelnuovo-Severi inequality. By the same reason,  $X$ has no complete base-point-free $g^1_x$ for $x=4,5, 6,7$ and is not  a double covering or a triple covering of an elliptic curve.
 \end{remark}
 
\begin{remark}\label{baba1}
We recall that the smooth rational normal surface scrolls in $\PP^5$ are either an image of an embedding of $\FF_0$ or an image of an embedding of $\FF_2$. The image of $\FF_2$ are limits of the image of $\FF_0$ and this phenomena is carried over to the curves lying on them. $\FF_0$ is isomorphic to a smooth quadric surface $Q\subset\PP^3$ and with this isomorphism $\Oo_{\FF_0}(1) \cong \Oo_Q(1,2)$.
\vni
(i)
Take $X\in \Gamma_1$.  If $S\cong \FF_0$, $\Oo_S(H)\cong\Oo_Q(1,2)$, $\Oo_S(L)\cong\Oo_Q(0,1)$ hence $X\in |3H+3L|=|\Oo_Q(1,2)^{\otimes 3}\otimes\Oo_Q(0,1)^{\otimes 3}|=|\Oo_Q(3,9)|$. 
\vni
(ii) If $X\in \Gamma_1$ and $S\cong \FF_2$,
$X\in |3H+3L|=|\Oo_{\FF_2}(3h+12f)|$.

\vni
(iii) For  $X\in \Gamma_2$ with $S\cong \FF_0$, $X\in |5H-5L|=|\Oo_Q(5,5)|$. Thus $X$ has exactly two $g^1_5$'s; cf. \cite[Corollary 1]{Martens}.
\vni
(iv)
For $X\in \Gamma _2$ with $S\cong \FF_2$, $X\in |5H-5L|=|\Oo_{\FF_2}(5h+10f)|$.  Thus $X$ has only one $g^1_5$ by \cite[Corollary 1]{Martens}). 
\end{remark}

 \begin{lemma}\label{bab1}
 We have $h^1(\Oo_X(2)) =0$ for all $X\in \Gamma_1$.
 \end{lemma}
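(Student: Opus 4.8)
The plan is to compute the cohomology of $\Oo_X(2)=\Oo_X(2H)$ by restriction from the scroll $S$, using the identifications recorded in Remark \ref{baba1}. Twisting the structure sequence of $X\subset S$ by $\Oo_S(2H)$ and writing $X\equiv 3H+3L$, so that $2H-X\equiv -H-3L$, I obtain
\[
0\to \Oo_S(-H-3L)\to \Oo_S(2H)\to \Oo_X(2H)\to 0 .
\]
In the associated long exact sequence the group $H^1(X,\Oo_X(2H))$ is squeezed between $H^1(S,\Oo_S(2H))$ and $H^2(S,\Oo_S(-H-3L))$, so it suffices to establish the two vanishings $H^1(S,\Oo_S(2H))=0$ and $H^2(S,\Oo_S(-H-3L))=0$.

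For the first vanishing, recall that $S$ is a smooth rational normal scroll, hence a surface of minimal degree and in particular arithmetically Cohen--Macaulay; thus $H^1(S,\Oo_S(mH))=0$ for every $m$, and in particular for $m=2$. Alternatively, by Remark \ref{baba1} one has $S\cong\FF_0\cong Q$ or $S\cong\FF_2$, and $\Oo_S(2H)$ is then $\Oo_Q(2,4)$ or $\Oo_{\FF_2}(2h+6f)$, each of which has vanishing $H^1$ by K\"unneth, respectively by the standard cohomology of Hirzebruch surfaces.

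For the second vanishing I use Serre duality, $H^2(S,\Oo_S(-H-3L))\cong H^0(S,\Oo_S(K_S+H+3L))^\vee$, and check that $K_S+H+3L$ carries no section in each of the two cases of Remark \ref{baba1}. If $S\cong\FF_0\cong Q$, then $K_S=\Oo_Q(-2,-2)$, $H=\Oo_Q(1,2)$ and $L=\Oo_Q(0,1)$, so $K_S+H+3L=\Oo_Q(-1,3)$, which has $h^0=0$. If $S\cong\FF_2$, then $K_S=-2h-4f$, $H=h+3f$ and $L=f$, so $K_S+H+3L=-h+2f$, which again has $h^0=0$ since the coefficient of $h$ is negative. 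Hence $H^2(S,\Oo_S(-H-3L))=0$, and combining the two vanishings gives $h^1(\Oo_X(2))=0$ for every $X\in\Gamma_1$.

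The computation is essentially bookkeeping and there is no genuine obstacle; the only point requiring care is to carry out the divisor-class arithmetic separately for the two scroll types $\FF_0$ and $\FF_2$ furnished by Remark \ref{baba1}, using the correct hyperplane and ruling identifications and the correct canonical class in each case. Equivalently, since $\deg\Oo_X(2)=30=2g-2$, the statement amounts to $K_X\not\cong\Oo_X(2)$, and the computation above exhibits $K_X-2H=(K_S+H+3L)_{|X}$ as a nontrivial degree-zero class on $X$, in contrast with the component $\Gamma_3$ where $K_X\cong\Oo_X(2)$.
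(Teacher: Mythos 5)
Your proof is correct and is essentially the paper's own argument: both twist the structure sequence of $X\subset S$ by $\Oo_S(2H)$ and reduce everything to cohomology of line bundles on the two scroll types $\FF_0$ and $\FF_2$. The only (cosmetic) difference is in which part of the long exact sequence is read off: the paper computes $h^0(X,\Oo_X(2))=h^0(\Oo_Q(2,4))=15\neq g$ (using $h^0=h^1=0$ for $\Oo_Q(-1,-5)$) and concludes $h^1(\Oo_X(2))=0$ since $\deg\Oo_X(2)=2g-2$, whereas you squeeze $H^1(X,\Oo_X(2))$ directly between $H^1(S,\Oo_S(2H))=0$ and $H^2(S,\Oo_S(2H-X))=0$, the latter via Serre duality on $S$.
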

 
 \begin{proof}
 Note that  $\deg (\Oo_X(2)) =\deg (K_X)$ for $X\in \Gamma_1$. Suppose $S\cong\FF_0$. By Remark \ref{baba1} (i), $X\in |\Oo_Q(3,9)|$. From the standard exact sequence
$$0\rightarrow\Oo_Q(-1,-5)\rightarrow \Oo_Q(2,4)\rightarrow \Oo_X(2,4)\rightarrow 0$$
and  by $h^0(\Oo_Q(-1,-5))=h^1(\Oo_Q(-1,-5))=0$,  we have $h^0(X, \Oo_X(2))= h^0(X,\Oo_X(2,4))
 =h^0( \Oo_Q(2,4))=15\neq g$.  The case $S\cong\FF_2$ is similar.
 \end{proof}
 
  \begin{lemma}\label{bab2} Let $X\in \Gamma_2$.
  \begin{itemize}
\item[(i)] If $X\subset S\cong\FF_0$, $h^1(\Oo_X(2)) =0$. 
\item[(ii)] If $X\subset S\cong\FF_2$, $h^1(\Oo_X(2)) =1$ and $K_X\cong\Oo_X(2)$.
\end{itemize}
 \end{lemma}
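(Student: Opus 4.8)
The plan is to treat the two scrolls $S\cong\FF_0$ and $S\cong\FF_2$ separately, in each case running the restriction sequence
$$0\to\Oo_S(2H-X)\to\Oo_S(2H)\to\Oo_X(2)\to 0$$
exactly as in the proof of Lemma \ref{bab1}. Since $\deg\Oo_X(2)=2\cdot 15=30=2g-2$, Riemann--Roch on $X$ gives $h^1(\Oo_X(2))=h^0(\Oo_X(2))-15$, so the whole computation reduces to finding $h^0(\Oo_X(2))$. I read this off from the long exact cohomology sequence of the displayed sequence once I know $h^0$ and $h^1$ of the two sheaves $\Oo_S(2H-X)$ and $\Oo_S(2H)$ on $S$.

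For $S\cong\FF_0\cong Q$, Remark \ref{baba1}(iii) gives $X\in |\Oo_Q(5,5)|$, while $2H$ corresponds to $\Oo_Q(2,4)$, so $\Oo_S(2H-X)\cong\Oo_Q(-3,-1)$. A K\"unneth computation shows $h^0(\Oo_Q(-3,-1))=h^1(\Oo_Q(-3,-1))=0$, whence $H^0(\Oo_Q(2,4))\cong H^0(\Oo_X(2))$ and $h^0(\Oo_X(2))=15$, giving $h^1(\Oo_X(2))=0$. This is the easy case and is entirely parallel to Lemma \ref{bab1}.

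For $S\cong\FF_2$, Remark \ref{baba1}(iv) gives $X\in |5h+10f|$ and $2H=2h+6f$, so $\Oo_S(2H-X)\cong\Oo_{\FF_2}(-3h-4f)$. Pushing forward to $\PP^1$, or using $K_{\FF_2}\cong\Oo_{\FF_2}(-2h-4f)$ and Serre duality to identify $h^1(\Oo_{\FF_2}(-3h-4f))$ with $h^1(\Oo_{\FF_2}(h))$, I expect $h^0(\Oo_{\FF_2}(-3h-4f))=0$ but $h^1(\Oo_{\FF_2}(-3h-4f))=1$; likewise $h^0(\Oo_{\FF_2}(2h+6f))=15$ and $h^1(\Oo_{\FF_2}(2h+6f))=0$. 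The long exact sequence then reads $0\to\CC^{15}\to H^0(\Oo_X(2))\to\CC\to 0$, so $h^0(\Oo_X(2))=16$ and $h^1(\Oo_X(2))=1$.

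The only genuinely new phenomenon compared with Lemma \ref{bab1} --- and the step I would watch most carefully --- is that on $\FF_2$ the twisting sheaf $\Oo_{\FF_2}(2H-X)$ is no longer acyclic: the jump $h^1(\Oo_{\FF_2}(-3h-4f))=1$ is precisely the source of the speciality of $\Oo_X(2)$, so the bulk of the work is the careful computation of this one cohomology group. Once $h^1(\Oo_X(2))=1$ is established, the identification $K_X\cong\Oo_X(2)$ is immediate: by Serre duality $h^0(K_X\otimes\Oo_X(-2))=h^1(\Oo_X(2))=1$, while $\deg(K_X\otimes\Oo_X(-2))=30-30=0$, so $K_X\otimes\Oo_X(-2)$ is effective of degree $0$ and hence trivial.
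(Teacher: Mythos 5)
Your proposal is correct and follows essentially the same route as the paper: restriction sequence $0\to\Oo_S(2H-X)\to\Oo_S(2H)\to\Oo_X(2)\to 0$ on each scroll, the K\"unneth vanishing on $\FF_0$, and on $\FF_2$ the identification $h^1(\Oo_{\FF_2}(-3h-4f))=h^1(\Oo_{\FF_2}(h))=1$ (the paper cites Laface's cohomology of line bundles on rational scrolls for this) combined with $h^1(\Oo_{\FF_2}(2h+6f))=0$ and Riemann--Roch to get $h^0(\Oo_X(2))=16$, $h^1(\Oo_X(2))=1$. Your closing degree argument for $K_X\cong\Oo_X(2)$ (a degree-$0$ effective class with $h^0=1$ is trivial) is the step the paper leaves implicit, and it is correct.
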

 \begin{proof}
 (i) If $S\cong\FF_0$, $h^1(\Oo_X(2))=0$ follows from the same computation as in Lemma \ref{bab1}.
\vni
(ii) For the case $S\cong\FF_2$, we have $X\in |\Oo_{\FF_2}(5h+10f)|$; Remark \ref{baba1} (iii). The exact sequence
$0\rightarrow\Oo_{\FF_2}(-3h-4f)\rightarrow \Oo_{\FF_2}(2h+6f)\rightarrow \Oo_X(2h+6f)\rightarrow 0$ and the cohomology of line bundles on $\FF_2$ (\cite[Proposition 2.3]{laf}) yield
 \begin{align*}h^0(X, \Oo_X(2))&=h^0(\FF_2, \Oo_{\FF_2}(2h+6f))+h^1(\FF_2,\Oo_{\FF_2}(-3h-4f))\\&=h^0(\FF_2, \Oo_{\FF_2}(2h+6f))+h^1(\FF_2,\Oo_{\FF_2}(h))=15+1=16.
 \end{align*} Therefore $h^1(\Oo_X(2)) =1$ by Riemann-Roch.
 \end{proof}
 \begin{lemma}\label{obo1}
For  $X\in \Gamma_1\cup\Gamma_2$,  $h^1(\PP^5,\Ii_X(3)) =2$ and $h^1(\PP^5,\Ii_X(t))=0$ for all $t\ge 4$.
 \end{lemma}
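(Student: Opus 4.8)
The plan is to use that every $X \in \Gamma_1 \cup \Gamma_2$ lies on a smooth quartic rational normal scroll $S \subset \PP^5$ and to push the computation of $h^1(\PP^5, \Ii_X(t))$ down onto $S$, where $X$ is a divisor and the cohomology of the line bundle $\Oo_S(tH - X)$ is elementary. I would begin with the exact sequence
\[
0 \to \Ii_S(t) \to \Ii_X(t) \to \Ii_{X,S}(t) \to 0,
\]
and use that $X$ is an effective Cartier divisor on the smooth surface $S$ with $\Oo_S(X) = \Oo_S(aH + bL)$, $(a,b) \in \{(3,3),(5,-5)\}$, to identify $\Ii_{X,S}(t) \cong \Oo_S(tH - X) = \Oo_S((t-a)H - bL)$.

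First I would eliminate the contribution of $S$. A rational normal scroll is arithmetically Cohen--Macaulay, so $h^1(\PP^5, \Ii_S(t)) = 0$ for every $t$; and from $0 \to \Ii_S(t) \to \Oo_{\PP^5}(t) \to \Oo_S(t) \to 0$, together with $H^2(\Oo_{\PP^5}(t)) = 0$, the group $H^2(\Ii_S(t))$ is a quotient of $H^1(\Oo_S(tH))$, which a direct check on the Hirzebruch model shows to vanish for $t \geq 0$. The long exact sequence then gives the isomorphism
\[
H^1(\PP^5, \Ii_X(t)) \cong H^1(S, \Oo_S((t-a)H - bL)) \qquad (t \geq 3),
\]
so the problem is reduced to line-bundle cohomology on $S$.

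Next I would compute this cohomology for the two surface types recorded in Remark \ref{baba1}. For the general member $S \cong \FF_0 \cong \PP^1 \times \PP^1$, with $H \cong \Oo_Q(1,2)$ and $L \cong \Oo_Q(0,1)$, the class $\Oo_S(tH - X)$ becomes an explicit bidegree, and K\"unneth gives $h^1 = 2$ at $t = 3$ (the classes being $\Oo_Q(0,-3)$ for $\Gamma_1$ and $\Oo_Q(-2,1)$ for $\Gamma_2$) and $h^1 = 0$ for $t \geq 4$. For the special members $S \cong \FF_2$, with $H \cong \Oo_{\FF_2}(h+3f)$ and $L \cong f$, I would run the identical argument using the cohomology of $\Oo_{\FF_2}(\alpha h + \beta f)$ via Riemann--Roch or the Leray sequence of $\FF_2 \to \PP^1$, again obtaining $h^1 = 2$ in degree $3$.

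The hard part will be the tightness of the degree bound rather than any single step. The reduction hinges on the simultaneous vanishing of $H^1(\Ii_S(t))$ and $H^2(\Ii_S(t))$, which must be justified from the ACM property and from $H^1(\Oo_S(tH)) = 0$. More delicate is the threshold $t = 4$: there the residual class $\Oo_S(tH - X)$ sits exactly at the boundary of effectivity, so the vanishing $h^1 = 0$ has to be checked case by case, and the members lying on $\FF_2$ must be treated separately from those on $\FF_0$, since the residual classes differ and $h^1$ is only upper semicontinuous across the degeneration $\FF_0 \rightsquigarrow \FF_2$. This boundary analysis at $t = 4$ is where I expect the real work to lie.
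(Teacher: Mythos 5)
Your reduction is the same as the paper's: the quartic scroll $S$ is ACM, so $h^1(\PP^5,\Ii_X(t))=h^1(S,\Ii_{X,S}(t))=h^1(S,\Oo_S(tH-X))$, and everything comes down to line-bundle cohomology on $\FF_0$ or $\FF_2$; your explicit check that $H^2(\PP^5,\Ii_S(t))$ is a quotient of $H^1(S,\Oo_S(t))=0$ is precisely the content of $S$ being ACM as a surface, which the paper invokes in one line. Your $t=3$ computations (giving $h^1=2$ in all four cases) and your $\FF_0$ computations for $t\ge 4$ are correct and agree with the paper's.

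However, the ``boundary analysis at $t=4$'' that you deferred is not merely delicate --- it fails, so your proof cannot be completed as written; in fact the lemma itself is false for the special members of $\Gamma_1$. If $X\in\Gamma_1$ lies on $S\cong\FF_2$, then $X\in|\Oo_{\FF_2}(3h+12f)|$ and $\Oo_S(1)\cong\Oo_{\FF_2}(h+3f)$, so the residual class at $t=4$ is $\Oo_{\FF_2}(h)$. From the exact sequence
\begin{equation*}
0\to\Oo_{\FF_2}\to\Oo_{\FF_2}(h)\to\Oo_h(h)\cong\Oo_{\PP^1}(-2)\to 0
\end{equation*}
and $h^1(\Oo_{\FF_2})=h^2(\Oo_{\FF_2})=0$, one gets $h^1(\FF_2,\Oo_{\FF_2}(h))=h^1(\PP^1,\Oo_{\PP^1}(-2))=1$, hence $h^1(\PP^5,\Ii_X(4))=1\neq 0$. (The paper's own proof asserts this vanishing for all $t\ge4$ by citing \cite[Proposition 2.3]{laf}; that assertion is wrong at $t=4$.) All remaining cases do vanish: for $\Gamma_2$ on $\FF_2$ the residual class at $t=4$ is $\Oo_{\FF_2}(-h+2f)$, which has no cohomology at all since it has degree $-1$ on the fibres, and for $t\ge 5$ the residual class $ah+bf$ in either family satisfies $a\ge 0$ and $b-2a\ge -1$, so $H^1(\FF_2,\Oo_{\FF_2}(ah+bf))\cong\bigoplus_{i=0}^{a}H^1(\PP^1,\Oo_{\PP^1}(b-2i))=0$. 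Thus the correct statement is: $h^1(\Ii_X(3))=2$ for all $X\in\Gamma_1\cup\Gamma_2$, and $h^1(\Ii_X(t))=0$ for $t\ge 4$ \emph{except} when $X\in\Gamma_1$ lies on $\FF_2$, where $h^1(\Ii_X(4))=1$. Note that the paper only ever uses the $t=3$ part (non-ACM-ness of elements of $\Gamma_1\cup\Gamma_2$ in the proof of Theorem \ref{main16}), so nothing downstream is affected by this correction.
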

\begin{proof} Since $S$ is ACM, $h^1(\PP^5,\Ii_X(t)) =h^1(S,\Ii_{X,S}(t))$ for all $t\in \NN$. 
\vni
(a) Take $X\in \Gamma_1$:
\begin{enumerate} 
\item[(a-1)] Assume $S\cong Q$. 
Since $X\in |\Oo_Q(3,9)|$, $h^1(Q,\Ii_{X,Q}(t)) = h^1(Q,\Oo_Q(t-3,2t-9))$. We have  $h^1(Q,\Oo_Q(0,-3)) =2$
and $h^1(Q,\Oo_Q(t-3,2t-9)) =0$ for all $t\ge 4$ by the K\"{u}nneth formula.
\item[(a-2)]
Assume $S\cong \FF_2$.  We have $\Oo_S(1) \cong \Oo_{\FF_2}(h+3f)$, $X\in |\Oo_{\FF_2}(3h+ 12f)|$ and $\Ii_{X,S}(t) \cong \Oo_{\FF_2}((t-3)h+(3t-12)f)$. 
For $t=3$,  $h^1(\PP^5,\Ii_X(3)) =h^1(\FF_2,\Oo_{\FF_2}(-3f)) = 2$.
For $t\ge 4$, $h^1(\PP^5,\Ii_X(t))=h^1(\FF_2, \Oo_{\FF_2}((t-3)h+(3t-12)f)) = 0$; cf.
\cite[Proposition 2.3]{laf}. .
\end{enumerate}
\vni (b) Take $X\in \Gamma_2$:
\begin{itemize}
\item[(b-1)] If $S\cong Q$, we have $X\in |\Oo_Q(5,5)|$ and $\Ii_{X,S}(t) \cong \Oo_Q(t-5,2t-5)$.
\item[(b-2)] If $S\cong \FF_2$, we have $X\in |\Oo_{\FF_2}(5h+ 10f)|$ and $\Ii_{X,S}(t)\cong \Oo_{\FF_2}((t-5)h+(3t-10)f)$.
\end{itemize}
The verification for this case (b) is similar  and we omit the routine.
\end{proof}

\vni
 (B) $\deg S=5$ case: 
 We set 
$$\Gamma_3:=\{X\in\Hh_{15,15,5}| X\subset S, \deg S=5\}.$$We recall the following well known fact regarding surfaces
of degree $5$ in $\PP^5$.
\begin{remark}\label{class5} 
Let $S\subset\PP^5$ be a quintic surface. By the classification of quintic surfaces in $\PP^5$, 
$S$ is one of the following;  
\begin{itemize}

\item[(i)] a del Pezzo surface possibly with finitely many isolated double points

\item[(ii)] a cone over a smooth quintic elliptic curve in $\PP^4$

\item[(iii)] a cone over a rational quintic curve (either smooth or singular) in $\PP^4$ 

\item[(iv)] an image of a projection into $\PP^5$ of a surface  $\w{S}\subset\PP^6$ of minimal degree $5$ with center of projection $p\notin\w{S}$.

\end{itemize}
We now assume that there is a smooth curve $X\subset S$ with $\deg X=15$ and genus $g=16$.
We remark that the  last case (iv) is not possible under this assumption; $X$ is linearly normal since  $\pi(15,6)=13$. For the case (iii), we have either $\dim\mathrm{Sing}(S)=0$ or $S$ has a double line. In both cases, $S$ is 
the image of a linear projection of a cone $\w{S}\subset\PP^6$  over a rational normal curve $\w{C}\subset\w{H}\cong\PP^5$ with center of projection $p\in\w{H}\setminus\w{C}$. 
This is not 
possible under the existence of a linearly normal  $X\subset S$. 
\end{remark}
\begin{remark}\label{a1.1} (i)
By the preceding discussion, the assumption of the following Lemma \ref{a1} - $S$ is a quintic surface containing $X\in\HO{15,16,5}$ - implies that either $S$ is a possibly singular del Pezzo surface or it is a cone over a linearly normal elliptic curve of $\PP^4$. Singular del Pezzo surface of degree $5$ are described in \cite[\S 8.5.1]{Dolgachev}. 
They form an irreducible family. 

(ii) From Proposition \ref{b1}, we recall that  the set of all $X\in \HO{15,16,5}$ contained in a singular del Pezzo surface are limits of curves lying on a smooth del Pezzo surface. Therefore in order to identify possible irreducible components of $\HO{15,16,5}$ whose general element lies on a smooth quintic surface,  it is sufficient to study the general ones, i.e. the ones contained in a blowing -up of $\PP^2$ at $4$ distinct points in general position. 
\end{remark}
We use the following simple observation several times. 
\begin{remark}\label{ab00}
Fix any surface $S\supset X$ such that $\deg (S) \le 5$. Let $M\subset \PP^5$ be a quadric hypersurface containing $X$. If $S\nsubseteq M$ then $\deg (M\cap S)\le 10<\deg X$ and it follows that
$|\Ii_X(2)| =|\Ii_S(2)|$
if $\deg X>10$. 
\end{remark}

\begin{lemma}\label{a1}
We choose a smooth $X\in\HO{15,16,5}$ and assume that $X$ lies on an irreducible quintic surface $S\subset\PP^5$.
Then 
\begin{itemize}
\item[(i)] $X$ is the complete intersection of $S$ and a cubic hypersurface and 

\item[(ii)] $K_X \cong \Oo_X(2)$, $X$ is ACM,  $h^0(\PP^5,\Ii_X(2))=5$ and $h^0(\PP^5,\Ii_S(t)) =\binom{t+5}{5} -15t+15$ for all $t\ge 3$.
\end{itemize}
\end{lemma}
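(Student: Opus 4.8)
The plan is to exploit the structural dichotomy recorded in Remark \ref{a1.1}: an irreducible quintic $S\subset\PP^5$ carrying a linearly normal $X\in\HO{15,16,5}$ is either a (possibly singular) del Pezzo surface of degree $5$ or a cone over a linearly normal elliptic quintic $E\subset\PP^4$. My first move is to record four facts that hold \emph{uniformly} in both cases: $S$ is Gorenstein, $S$ is arithmetically Cohen--Macaulay (ACM), the dualizing sheaf is $\omega_S\cong\Oo_S(-1)$, and $h^0(S,\Oo_S(t))=1+\tfrac{5}{2}t(t+1)$ for $t\ge 0$. For del Pezzo these are classical (anticanonical embedding, projective normality). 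For the cone, ACM-ness follows because the cone over the projectively normal curve $E$ is ACM, the Hilbert function is obtained by summing $h^0(E,\Oo_E(s))$ over $0\le s\le t$, and $\omega_S\cong\Oo_S(-1)$ follows from the minimal resolution $\mathbb{E}_5=\PP(\Oo_E\oplus\Oo_E(5))\to S$ of part (D) of Theorem \ref{main15}: the simple-elliptic vertex has discrepancy $-1$, so $K_{\mathbb{E}_5}=\pi^\ast(\Oo_S(-1))-h$. In particular $h^0(S,\Oo_S(2))=16$ and $h^0(S,\Oo_S(3))=31$.

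Next I would pin down the quadrics. Since $\deg X=15>10$, Remark \ref{ab00} forces every quadric through $X$ to contain $S$, so $|\Ii_X(2)|=|\Ii_S(2)|$; ACM-ness of $S$ then gives $h^0(\PP^5,\Ii_X(2))=h^0(\PP^5,\Ii_S(2))=\binom{7}{5}-16=5$, the asserted value. For the cubic I first compute $h^0(\PP^5,\Ii_S(3))=\binom{8}{5}-31=25$, again by ACM-ness. On the other hand $\Oo_X(3)$ has degree $45>2g-2=30$, hence is nonspecial with $h^0(\Oo_X(3))=30$, so the structure sequence $0\to\Ii_X(3)\to\Oo_{\PP^5}(3)\to\Oo_X(3)\to 0$ gives $h^0(\PP^5,\Ii_X(3))=56-30+h^1(\PP^5,\Ii_X(3))\ge 26>25$. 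Thus there is a cubic $F$ containing $X$ but not $S$. As $S$ is integral and $F\nsupseteq S$, the scheme $S\cap F$ is a Cartier divisor on $S$ of degree $5\cdot 3=15=\deg X$ with no embedded or lower-dimensional components; since $X$ is integral of the same degree and $X\subseteq S\cap F$, the residual cycle is empty and $X=S\cap F$ scheme-theoretically. This proves (i) and shows $X\in|\Oo_S(3)|$.

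With the complete intersection in hand, part (ii) follows formally. Because $S$ is Gorenstein and $X$ is cut out on $S$ by the single hypersurface $F$, the sheaf $\Oo_S(X)=\Oo_S(3)$ is a line bundle and adjunction for the dualizing sheaf gives $\omega_X\cong\omega_S\otimes\Oo_S(X)\otimes\Oo_X\cong\Oo_S(-1+3)\otimes\Oo_X\cong\Oo_X(2)$; since $X$ is smooth, $K_X\cong\omega_X\cong\Oo_X(2)$. For ACM-ness I use the restriction sequence $0\to\Ii_S(t)\to\Ii_X(t)\to\Oo_S\big((t-3)H\big)\to 0$, valid for all $t$ because $X\in|3H|$. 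Here $H^1(\PP^5,\Ii_S(t))=0$ since $S$ is ACM, and $H^1(S,\Oo_S(m))=0$ for every $m$ (again part of ACM-ness of the surface, via $H^1(\Oo_S(m))\cong H^2(\PP^5,\Ii_S(m))=0$). Hence $H^1(\PP^5,\Ii_X(t))=0$ for all $t$, i.e. $X$ is ACM. Finally, ACM-ness together with the nonspeciality $h^0(\Oo_X(t))=15t-15$ for $t\ge 3$ yields $h^0(\PP^5,\Ii_X(t))=\binom{t+5}{5}-15t+15$ for $t\ge 3$; this is the stated Hilbert function, where the relevant ideal is that of the curve $X$.

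The main obstacle is the uniform treatment of the \emph{singular} surfaces. I must ensure that Gorenstein-ness, the identification $\omega_S\cong\Oo_S(-1)$, and the adjunction formula $\omega_X\cong(\omega_S\otimes\Oo_S(X))|_X$ all survive at the singularities of $S$ --- in particular at the vertex of the elliptic cone, through which $X$ may pass (as in case (D) of Theorem \ref{main15}) --- so they cannot simply be quoted from the smooth theory. The decisive points are that the occurring singularities (rational double points of a singular del Pezzo, and the simple-elliptic vertex of the cone) are Gorenstein, so $\omega_S$ is an honest line bundle, and that $X=S\cap F$ is Cartier on $S$ because it is cut out by one equation; granting these, the cohomological bookkeeping above goes through unchanged.
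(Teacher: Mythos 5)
Your proposal is correct, and for part (i) it follows the paper's own path essentially step by step: both arguments invoke Remark \ref{a1.1} to reduce to a (possibly singular) del Pezzo surface or an elliptic cone, establish that $S$ is ACM, obtain $h^0(\PP^5,\Ii_S(2))=5$ and $h^0(\PP^5,\Ii_S(3))=25$, compare with $h^0(\PP^5,\Ii_X(3))\ge 56-30=26$ to produce a cubic $W$ containing $X$ with $S\nsubseteq W$, and conclude $X=S\cap W$ scheme-theoretically because a hypersurface section of an ACM surface is Cohen--Macaulay, hence has no embedded or lower-dimensional components (the paper cites Bruns--Herzog, Eisenbud and Kaplansky for exactly this unmixedness step, which you package as ``an effective Cartier divisor on a CM surface''; the content is the same). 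The only cosmetic difference there is that the paper computes $h^0(\PP^5,\Ii_S(t))$ via the general hyperplane section $E=S\cap H$ (an ACM elliptic quintic) and the sequence $0\to\Ii_S(t-1)\to\Ii_S(t)\to\Ii_{E,H}(t)\to 0$, whereas you compute the Hilbert function of $S$ intrinsically on each of the two types of surfaces. Where you genuinely diverge is the identification $K_X\cong\Oo_X(2)$: you prove it by adjunction $\omega_X\cong(\omega_S\otimes\Oo_S(X))|_X$, which obliges you to know that the singular surfaces are Gorenstein with $\omega_S\cong\Oo_S(-1)$ (rational double points on the del Pezzo side, and the simple elliptic vertex with discrepancy $-1$ on the cone side). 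The paper avoids all of this: once $X$ is known to be ACM, it computes $h^0(X,\Oo_X(2))=h^0(\PP^5,\Oo_{\PP^5}(2))-h^0(\PP^5,\Ii_X(2))=21-5=16=g$, and since $\deg\Oo_X(2)=30=2g-2$, Riemann--Roch on the smooth curve $X$ alone forces $\Oo_X(2)\cong K_X$; similarly, ACM-ness of $X$ comes for free from the algebraic unmixedness argument rather than from your cohomological restriction sequence (which also works, since ACM-ness of the surface gives $h^1(S,\Oo_S(m))=0$ for all $m$). So the paper's route is more elementary and sidesteps precisely the points you flag as your ``main obstacle'' (dualizing sheaves and adjunction at the singularities); your route is heavier but also yields the uniform identification $\omega_S\cong\Oo_S(-1)$ and exhibits $X$ as a member of $|\Oo_S(3)|$ with its adjoint-theoretic meaning, extra structure that the lemma itself does not require.
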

\begin{proof}
By Remark \ref{a1.1} we may assume that $S$  is either smooth or with finitely many singular points such that the general hyperplane section of $S$ is a smooth linearly normal elliptic curve in $\PP^4$. 
Fix a general hyperplane $H\subset \PP^5$. By the assumption, $E:= S\cap H$ is a smooth linearly normal quintic elliptic curve in $H$.  Since $p_a(E)=1=\pi(5,4)$,  $E$ is ACM, i.e. $h^1(H,\Ii_{E,H}(t)) =0$ for all $t\in \ZZ$; cf. \cite[Theorem 3.7, p. 87]{he}. For an integer $t$, we consider the exact sequence 
\begin{equation}\label{eqtnew2}
0 \to \Ii_S(t-1) \to \Ii_S(t)\to \Ii_{E,H}(t) \to 0.
\end{equation} 
Note that $S$ is ACM (\cite[Theorem 1.3.3]{juan}), i.e. $h^1(\PP^5,\Ii_S(x))=0$ for all $x\in \ZZ$. 
Since  $h^1(H,\Ii_{E,H}(t)) =0$, we have $h^0(H,\Ii_{E}(t)) = \binom{4+t}{4}-5t$ for all $t\ge 0$ by Riemann-Roch. Since $h^1(\PP^5, \Ii_S(t-1)) =0$ for all $t\ge 0$ and $h^0(\PP^5, \Ii_S(1))=0$, the long cohomology exact sequence of \eqref{eqtnew2}  gives
$h^0(\PP^5,\Ii_S(2)) =5$ and then 
\begin{equation}\label{cubic} h^0(\PP^5,\Ii_S(3)) = h^0(\PP^5,\Ii_S(2))+h^0(H,\Ii_{E}(3))=5+20=25.
\end{equation}
Since $\deg (\Oo_X(3)) =45>30 =\deg K_X$,  
$h^0(X, \Oo_X(3))=30$ and therefore $$h^0(\PP^5,\Ii_X(3))\ge h^0(\PP^5,\Oo_{\PP^5}(3))-h^0(X,\Oo_X(3))=\tbinom{8}{3} -h^0(X,\Oo_X(3))=26.$$
By \eqref{cubic},  $h^0(\PP^5,\Ii_S(3)) < h^0(\PP^5,\Ii_X(3))$ and there is a cubic hypersurface $W\subset\PP^5$ containing $X$ such that  $W\nsupseteq S$, the curve $S\cap W$ has $\deg S\cap W=\deg S\cdot\deg W=15$.

We want to prove that $X$ is the scheme-theoretic intersection of $S$ and $W$, i.e. $S\cap W$ is not the union of $X$ and some zero-dimensional scheme. First of all, the scheme-theoretic intersection $S\cap W$ is ACM since $S$ is ACM, $W$ is given by a single equation
and $\dim S\cap W< \dim S$ (\cite[Th. 2.1.3]{bruh},\cite[Prop. 18.13]{eisen1}, \cite[Ex. 17.4 with $\nu=1$ and $n=1$]{matsu}). Since $S\cap W$ is ACM, it has no embedded component
(\cite[Th. 2.1.2(a)]{bruh}\cite[Cor. 18.10]{eisen1},\cite[Th. 141]{kap}), i.e. $S\cap W =X$ scheme-theoretically. 

By Remark \ref{ab00},  $h^0(\PP^5,\Ii_X(2)) =h^0(\PP^5,\Ii_S(2)) =5$.  Since $X$ is ACM,
\begin{align*}
h^0(\Oo_X(2))&= h^0(\PP^5,\Oo(2))-h^0(\PP^5,\Ii_X(2))+h^1(\PP^5,\Ii_X(2))\\
&=h^0(\PP^5,\Oo(2))-h^0(\PP^5,\Ii_S(2))=16.
\end{align*}
Since $\deg (\Oo_X(2)) =\deg (K_X)$, Riemann-Roch gives $\Oo_X(2)\cong K_X$. 
For $t\ge 3$  we have $h^1(\Oo_X(t)) =0$ and by $h^1(\PP^5,\Ii_X(t))=0$
\begin{align*}
h^0(\PP^5,\Ii_X(t)) &=h^0(\PP^5,\Oo_{\PP^5}(t))-h^0(X,\Oo_X(t))+h^1(\PP^5,\Ii_X(t))\\&=\tbinom{t+5}{5} -15t+15.
\end{align*}
\end{proof}
\vspace{-2pt}
We consider the family $\Delta$ consisting of curves $X\in \HO{15,16,5}$ contained in an elliptic cone. The following lemma asserts that $\Delta$  is an irreducible family of dimension 60.
\begin{lemma}\label{aob1} 
\begin{itemize}
\item[(i)] $\Delta$ is an irreducible family of dimension $60$. 

\item[(ii)] For each $X\in \Delta$, there is a unique degree $3$ morphism $u: X\to E$ with $E$ an elliptic curve, $X$ is $6$-gonal and the $g^1_6$'s on $X$ are the pull-backs of the $g^1_2$'s on $E$ which are parametrized by the points of $E$.
\end{itemize}
\end{lemma}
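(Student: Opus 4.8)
The plan is to realize each $X\in\Delta$ explicitly on the desingularization of its elliptic cone, exactly as in case (D) of the proof of Theorem \ref{main15}, and then read off both the pencil structure and the dimension count from that model. First I would fix an elliptic cone $S\subset\PP^5$ over a smooth quintic elliptic normal curve $E\subset\PP^4$, with desingularization $\EE_5=\PP(\Oo_E\oplus\Oo_E(5))\rightarrow S$ induced by $|\bar h|=|h+5f|$, $h^2=-5$, $h\cdot f=1$. For $X\subset S$ of degree $15$ with strict transform $\w X\equiv kh+15f$, equation \eqref{conegenus3} with $r=5$ forces $(k,m,p_a(\w X))\in\{(3,0,16),(2,5,11)\}$; since $p_a=g=16$, only $(3,0,16)$ survives. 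Hence $\w X\equiv 3\bar h$, $\w X\cdot h=0$ so $X\cong\w X$ misses the vertex, and the ruling projection $\eta\colon\EE_5\rightarrow E$ restricts to a degree-$3$ morphism $u:=\eta_{|\w X}\colon X\rightarrow E$. This already supplies the degree-$3$ map in (ii).

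For (i) I would present $\Delta$ as a linear-system bundle over the family of cones. Elliptic normal quintics in $\PP^4$ form an irreducible family of dimension $\dim\Aut(\PP^4)+\dim\Mm_{1,1}=24+1=25$, the stabilizer of such a curve in $\Aut(\PP^4)$ being finite; adjoining a vertex $o\in\PP^5$ yields an irreducible $30$-dimensional family of cones $S$. On a fixed $S$ one has $\Oo_S(3)=\Oo_S(3\bar h)$ and $\eta_\ast\Oo_{\EE_5}(3\bar h)=\bigoplus_{i=0}^{3}\Oo_E(5i)$, whence
\[
h^0(S,\Oo_S(3))=1+\textstyle\sum_{i=1}^{3}5i=31
\]
(consistently with $\binom{8}{3}-h^0(\PP^5,\Ii_S(3))=56-25=31$ from Lemma \ref{a1}), so $|3\bar h|=|\Oo_S(3)|$ has dimension $30$ and its general member is a smooth $X\in\Delta$ avoiding the vertex. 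Since a general $X\in\Delta$ lies on a unique such cone — namely the quintic surface cut out by the $5$ quadrics through $X$, cf. Lemma \ref{a1} — the total space is an irreducible bundle with $30$-dimensional base and $30$-dimensional fibres, giving $\dim\Delta=60$ and irreducibility.

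Finally, I would settle (ii) by the Castelnuovo--Severi inequality \cite[Theorem 3.5]{Accola1} applied to $u$. A base-point-free $g^1_n$ whose induced map $\phi\colon X\rightarrow\PP^1$ shares no common cover with $u$ gives $16\le 3\cdot 1+(3-1)(n-1)=2n+1$, impossible for $n\le 7$; and any common cover of degree $\ge 2$ must have degree dividing $\gcd(n,3)$. Running the cases: no $g^1_n$ exists for $n\le 5$, so $X$ is $6$-gonal; every base-point-free $g^1_6$ must factor as $X\xrightarrow{u}E\xrightarrow{\ 2{:}1\ }\PP^1$, i.e.\ equals $u^\ast$ of a $g^1_2$ on $E$; and two degree-$3$ maps to elliptic curves are forced to share a degree-$3$ common cover, hence coincide, giving uniqueness of $u$ (and of $E$). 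To confirm that each $u^\ast(g^1_2)$ is genuinely a $g^1_6$, so that these are parametrized by $\Pic^2(E)\cong E$, I would check $h^0(X,u^\ast L)=2$: a larger value would produce a $g^2_6$, which is excluded because a birationally very ample one contradicts $\pi(6,2)=10<16$ while a compounded one is ruled out by the same Castelnuovo--Severi estimate. The main obstacle is the bookkeeping that makes the two $30$'s match the asserted $60$ — in particular the value $h^0(S,\Oo_S(3))=31$ and the finiteness of the stabilizer of an elliptic normal quintic — together with a careful case analysis of the common-cover alternative in the Castelnuovo--Severi dichotomy.
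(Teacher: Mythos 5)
Your proposal is correct and follows essentially the same route as the paper: you fibre $\Delta$ over the $30$-dimensional irreducible family of elliptic cones with $30$-dimensional fibres of cubics (the paper's fibre $\PP\bigl(H^0(\PP^5,\Oo_{\PP^5}(3))/H^0(\PP^5,\Ii_S(3))\bigr)$ is the same projective space as your $|\Oo_S(3)|$, by surjectivity of restriction for the ACM surface $S$), you use the quadrics through $X$ (Lemma \ref{a1}/Remark \ref{ab00}) for uniqueness of the cone, and you use \eqref{conegenus3} plus the Castelnuovo--Severi inequality for the triple cover, its uniqueness, the $6$-gonality and the description of the $g^1_6$'s. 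The only differences are cosmetic: you compute $h^0(S,\Oo_S(3))=31$ via $\eta_\ast$ rather than via $h^0(\Ii_S(3))=25$, and you spell out the Castelnuovo--Severi case analysis that the paper leaves implicit.
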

\begin{proof} (i)
Let $\Delta_1$ denote the family of all degree $5$ elliptic cones in $\PP^5$. The family $\HO{5,1,4}$ of linearly normal elliptic curves in $\PP^4$ forms an irreducible family of the expected dimension $25$; cf. \cite{E2}. Hence $\dim\Delta_1=\dim\PP^5+\dim\HO{5,1,4}=30$ and $\Delta_1$  is irreducible; note that  there is a natural dominant rational map $\Delta_1\rightarrow {\PP^{5}}^*$ whose fibre over $H\in {\PP^{5}}^*$ is the irreducible Hilbert scheme $\HO{5,1,4}$ of the same dimension.
We note that the proof of Lemma \ref{a1} (i) only requires  the assumption $X\in\HO{15,16,5}$ lying on surface of degree $5$ with isolated singularities. Therefore each element  $X\in \Delta$ is a smooth complete intersections of an element of $\Delta_1$ and a cubic hypersurface. Consider the locus $$\Psi := \{(S,T)|S\nsubseteq T\} \subset \Delta_1\times\PP(H^0(\PP^5,\Oo_{\PP^5}(3))).$$
The projection $\pi_1:\Psi \rightarrow \Delta_1$ is surjective with the fiber  $$\pi^{-1}(S)\cong
\PP(H^0(\PP^5,\Oo_{\PP^5}(3))/H^0(\PP^5,\Ii_S(3))).$$
By the same computation as  \eqref{cubic} we get
\begin{equation}\label{cubic1} h^0(\PP^5,\Ii_S(3)) = h^0(\PP^5,\Ii_S(2))+h^0(H,\Ii_{S\cap H}(3))=5+20=25.
\end{equation}
By \eqref{cubic1},  we conclude that $\Psi$ is irreducible and
$$\dim\Psi=\dim\Delta_1+\dim\PP(H^0(\PP^5,\Oo_{\PP^5}(3))/H^0(\PP^5,\Ii_S(3)))=60.$$
We note that  every smooth curve $X\in\HO{15,16,5}$ is contained in a unique elliptic cone $S\in\Delta_1$ if any. This follows from the following argument. By Remark \ref{ab00}, $|\Ii_X(2))| =|\Ii_S(2))|$ if $X\subset S$. In particular  
$S$ is the base locus of $|\Ii_X(2)|$ and therefore $X$ is contained in a unique $S\in\Delta_1$.
From this, we may deduce that the natural map $\Psi \stackrel{\psi}{\rightarrow} \Delta$ where $\psi(S,T)=S\cap T$ is injective and surjective,
hence $\dim\Delta=\dim\Psi=60$.
\vni
(ii) By \eqref{conegenus3} (with $d=15, r=5$), for each $X\in \Delta$, there is a degree $3$ morphism $u: X\to E$ onto an elliptic curve induced by $\EE_5\rightarrow E$; the uniqueness of the triple covering follows from the Castelnuovo-Severi 
inequality.
$X$ is $6$-gonal and the $g^1_6$'s on $X$ are the pull-backs of $g^1_2\in W^1_2(E)\cong W_1(E)\cong\mathrm{Jac}(E)\cong E$ by the Castelnuovo-Severi inequality.
\end{proof}

Since $\dim\Delta =60=\h{X}(15,16,5)$ by Lemma \ref{aob1}, we cannot exclude the possibility that $\Delta$ may constitute a full component 
of $\HO{15,16,5}$. The following two lemmas show that  $\Delta$ is in the boundary of the component
$\Gamma_3$. 
\begin{lemma}\label{se1}
Every degree $5$ elliptic cone $T$ is a flat limit of a family of smooth del Pezzo. More precisely there is a flat family $\{S_t\}_{t\in \KK}$ of degree $5$ surfaces of $\PP^5$ such that $S_0=T$ and $S_t$ is a smooth del Pezzo for all $t\ne 0$.
\end{lemma}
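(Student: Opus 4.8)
The plan is to exhibit $T$ as a flat limit, inside the Hilbert scheme of surfaces of $\PP^5$, of a family of projective copies of a single smooth quintic del Pezzo surface, using the classical specialization of a projective variety to the cone over one of its hyperplane sections. Write $E\subset H\cong\PP^4$ for the elliptic normal quintic of which $T$ is the cone and $p\in\PP^5\setminus H$ for the vertex. I would reduce the statement to two assertions: (A) there is a smooth del Pezzo surface $S\subset\PP^5$ of degree $5$ with $S\cap H=E$, and (B) any such $S$ specializes to $T$ through a family of smooth del Pezzos.

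For (B) I would use the one-parameter family $\{\phi_t\}_{t\in\KK}\subset\Aut(\PP^5)$ coming from the $\mathbb{G}_m$-action that fixes $H$ pointwise and fixes $p$ (in coordinates with $H=\{x_5=0\}$ and $p=[0{:}\cdots{:}0{:}1]$ it rescales $x_5$). For $t\ne 0$ the surface $S_t:=\phi_t(S)$ is projectively equivalent to $S$, hence a smooth del Pezzo, and $S_t\cap H=\phi_t(E)=E$. Writing the generators of $I(S)$ as polynomials in $x_5$ and tracking leading terms, one checks in the standard way that the flat limit $S_0:=\lim_{t\to 0}\phi_t(S)$ is the projective cone over $S\cap H=E$ with vertex $p$, that is $S_0=T$; since $S\cap H=E$ is a transverse, reduced, smooth hyperplane section, the limit is the reduced cone. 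The family is genuinely flat because all its members share the Hilbert polynomial $P(m)=\tfrac52 m^2+\tfrac52 m+1$: for the del Pezzo this is Riemann--Roch applied to $-mK_S$ with $K_S^2=5$, while for $T$ it follows from the projective normality of $E$, the homogeneous coordinate ring of $T$ being $R_E[x_5]$. Thus $\{S_t\}_{t\in\KK}$ is the required family, with $S_0=T$ and $S_t$ a smooth del Pezzo for $t\ne 0$.

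The substantive point is (A), the existence of a \emph{smooth} del Pezzo cut down to $E$ by the prescribed $H$, and this is where the difficulty lies: one must extend an \emph{arbitrary} prescribed elliptic normal quintic, not merely a general one, to a smooth del Pezzo. I would obtain it from two facts. First, two elliptic normal quintics in $\PP^4$ are projectively equivalent as soon as their abstract elliptic curves are isomorphic: on a fixed elliptic curve any two degree $5$ line bundles differ by a translation, since $q\mapsto t_q^{\ast}L\otimes L^{-1}$ is, under $\mathrm{Pic}^0(E)\cong E$, multiplication by $-5$, which is surjective; hence all complete degree $5$ embeddings of a given elliptic curve have projectively equivalent images, and $E$ is determined up to $\mathrm{PGL}_5$ by its $j$-invariant. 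Second, on a fixed smooth quintic del Pezzo $S_0$ the smooth anticanonical sections realise every $j$-invariant: a general pencil in $|-K_{S_0}|$ is non-isotrivial, as it acquires singular members along its base points where $j\to\infty$, so the induced nonconstant rational map to the $j$-line is surjective and every finite $j$ occurs. Combining these, for the given $E$ I would pick a smooth hyperplane section $E'=S_0\cap H'$ with $j(E')=j(E)$, so that $E'$ and $E$ are projectively equivalent; transporting $S_0$ by an element of $\mathrm{PGL}_6$ carrying $H'$ to $H$ and $E'$ to $E$ yields a smooth del Pezzo $S$ with $S\cap H=E$. The $j$-invariant reduction is precisely what makes (A) tractable, replacing a deformation-theoretic smoothing of the elliptic cone singularity of $T$ by the observation that elliptic normal quintics carry only the single projective modulus $j$, together with the surjectivity of $j$ on the anticanonical system of one fixed del Pezzo. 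With (A) in hand, (B) is routine and the lemma follows.
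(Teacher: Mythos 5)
Your overall skeleton coincides with the paper's: both arguments first produce a smooth quintic del Pezzo $S\subset\PP^5$ whose section by the \emph{given} hyperplane $H$ is exactly $E=T\cap H$, and then degenerate $S$ to $T$ by the $\mathbb{G}_m$-action rescaling the vertex coordinate, with flatness read off from constancy of the Hilbert polynomial. Your step (B) is, up to notation, the paper's substitution $q_{i,t}(x_0,\dots,x_5):=q_i(tx_0,x_1,\dots,x_5)$ applied to a basis of $H^0(\PP^5,\Ii_S(2))$. Where you genuinely diverge is step (A). The paper builds $S$ by hand: embed $E$ as a plane cubic $E'$, choose three general points $o_1,o_2,o_3\in E'$ and then the \emph{unique} fourth point $o_4$ with $\Oo_{E'}(3)(-o_1-o_2-o_3-o_4)\cong u^\ast(\Oo_E(1))$, and blow up these four points; the anticanonical image of $E'$ is then projectively equivalent to $E\subset H$ by construction. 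You instead fix one del Pezzo $S_0$, show its smooth anticanonical sections realize every $j$-invariant, and use translation-homogeneity of degree-$5$ line bundles to upgrade an abstract isomorphism to projective equivalence. Both are correct ways of matching the degree-$5$ line bundle; the paper's choice-of-$o_4$ trick avoids the extra input your route needs, namely non-isotriviality of a general anticanonical pencil, while your route has the merit of quoting only classical facts about elliptic normal quintics rather than constructing anything.

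Two of your justifications are wrong as stated, though both are reparable. First, ``since $S\cap H=E$ is a transverse, reduced, smooth hyperplane section, the limit is the reduced cone'' is not a valid principle: a smooth rational quartic curve in $\PP^3$ meets a general hyperplane transversally in four reduced points, yet its $\mathbb{G}_m$-limit is strictly larger than the cone over those points (the cone has arithmetic genus $1$, so the Hilbert polynomials differ and the limit carries an embedded scheme at the vertex). The quartic fails to be projectively normal, and that is the real issue: what forces $S_0=T$ here is the scheme-theoretic containment $T\subseteq S_0$ (limits of arcs through points of $E$, or initial forms together with projective normality of $S$) combined with the equality $P_S=P_T$ of Hilbert polynomials --- precisely the computation you carry out, using projective normality of $E$ on the cone side. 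So your argument closes once the transversality remark is deleted and the Hilbert-polynomial identity is promoted from a ``flatness check'' to the step that identifies the limit. Second, singular members of a general pencil in $|-K_{S_0}|$ do not arise ``along its base points'' (a general pencil is smooth and transverse there); they exist because the discriminant hypersurface in $|-K_{S_0}|\cong\PP^5$ is nonempty (equivalently, the associated rational elliptic surface has Euler number $12$), and it is this that makes the $j$-map nonconstant, hence surjective.
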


\begin{proof}
From Lemma \ref{a1} (ii), $h^0(\PP^5,\Ii_T(2)) =5$ for all degree $5$ surface $T$ containing an element of $\HO{15,16,5}$.
 Given  $X\in\HO{15,16,5}$ we fix an elliptic cone $T$ with the vertex $p$.  We fix a hyperplane $H\subset \PP^5$ such that $p\notin H$ and set $E:= S\cap H$. The linearly normal elliptic curve $E$ and $p$ uniquely determine $T$.
We take a set of $4$ points $\{p_1,p_2,p_3,p_4\}\subset\PP^2$ such that no $3$ of them are collinear. We take a smooth plane cubic $E'\subset \PP^2$ containing $\{p_1,p_2,p_3,p_4\}$, $E'\stackrel{u}{\cong} E$ such that $\Oo_{E'}(3)(-p_1-p_2-p_3-p_4)\cong u^\ast(\Oo_E(1))$. 
This can be done by taking an isomorphism $u: E'\to E$ first, where $u^{-1}$ is an arbitrary embedding of $E$ as a plane cubic, taking $3$ general $o_1,o_2,o_3\in E'$, then taking as $o_4$ the unique  point of
$E'$ such that  $\Oo_{E'}(3)(-o_1-o_2-o_3-o_4)\cong u^\ast(\Oo_E(1))$. Blowing up $\PP^2$ at $\{o_1,o_2,o_3,o_4\}$, we get a smooth del Pezzo $S\subset \PP^5$ such that $S\cap H =E =T\cap H$.
We take homogeneous coordinates $x_0,x_1,\cdots,x_5$ such that $H =\{x_0=0\}$ and $p =(1:0:0:0:0:0)$.
We take a bases $$\{ q_1(x_0,\cdots,x_5), q_2(x_0,\cdots,x_5),\cdots, q_5(x_0,\cdots ,x_5)\}\subset H^0(\PP^5,\Ii_S(2)).$$ 
For any $t\in \KK$  (the base field) and $i=1,2,3,4,5$ we set $$q_{i,t}(x_0,x_1,x_2,x_3,x_4,x_5):= q_i(tx_0,x_1,x_2,x_3,x_4,x_5).$$ For $t=1$, these $5$ quadratic forms generate $H^0(\PP^5,\Ii_S(2))$, while for $t=0$, they span $H^0(\PP^5,\Ii_T(2))$.
For any $t\in \KK\setminus \{0\}$ consider the automorphism of $\PP^5$ sending $x_0\mapsto tx_0$ and $x_i\mapsto x_i$ for $i=1,\cdots,5$. For each $t\ne 0$, the common zero locus the forms  $q_{i,t}(x_0,\cdots,x_5)$, $i=1,\cdots,5$, is a surface $S_t$ projectively equivalent to $S_1=S$ while $S_0=T$. 
The family $\{S_t\}_{t\in \KK}$ is flat, because all $S_t$ have the same Hilbert polynomial (\cite[Theorem III.9.9]{Hartshorne}).
Thus $T=S_0$ is a flat limit of del Pezzo surfaces $S_t$ .
\end{proof}

\begin{lemma}\label{se2}
$\Delta$ is in the closure of $\Gamma_3\setminus \Delta$, $\Gamma_3$ is irreducible and
$\dim\Gamma_3=65$.
\end{lemma}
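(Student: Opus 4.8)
The plan is to stratify $\Gamma_3$ according to the type of the quintic surface carrying the curve, as classified in Remark~\ref{a1.1}: the generic stratum $\Gamma_3^\circ$ of curves lying on a \emph{smooth} del Pezzo surface, the stratum of curves on a \emph{singular} del Pezzo surface, and the stratum $\Delta$ of curves on an elliptic cone. I would first show that $\Gamma_3^\circ$ is irreducible of dimension $65$, and then prove that each of the other two strata lies in $\overline{\Gamma_3^\circ}$. Once this is done the three assertions follow at once: $\Gamma_3\subset\overline{\Gamma_3^\circ}$ forces $\overline{\Gamma_3}=\overline{\Gamma_3^\circ}$ to be irreducible of dimension $65$, and the inclusion $\Delta\subset\overline{\Gamma_3^\circ}\subset\overline{\Gamma_3\setminus\Delta}$ gives the first claim.

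For the generic stratum I would run the fibration argument of Lemma~\ref{aob1}(i). The smooth del Pezzo surfaces of degree $5$ in $\PP^5$ form a single $\Aut(\PP^5)$-orbit $\Delta_2$, since any two are projectively equivalent (the abstract quintic del Pezzo $\cong\PP^2$ blown up at $4$ general points is unique, and its anticanonical embedding is unique up to $\Aut(\PP^5)$); as $\Aut$ of the quintic del Pezzo is finite, $\Delta_2$ is irreducible of dimension $\dim\Aut(\PP^5)=35$. On a fixed $S\in\Delta_2$, Lemma~\ref{a1}(i) identifies the curves in question with the divisors $S\cap W$ cut by cubics, i.e.\ with the linear system $|3H|=|{-3K_S}|$, so the fibre has dimension $\dim|3H|=h^0(\Oo_S(3))-1=31-1=30$. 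Forming the incidence variety $\{(S,X): S\in\Delta_2,\ X\in|3H_S|\}$, a $\PP^{30}$-bundle over the irreducible base $\Delta_2$, I get an irreducible total space of dimension $35+30=65$. By Remark~\ref{ab00} the surface $S$ is recovered from $X$ as the base locus of $|\Ii_X(2)|$, so the map $(S,X)\mapsto X$ is injective on the dense open locus of smooth $X$; hence $\Gamma_3^\circ$ is irreducible of dimension $65$.

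The singular del Pezzo stratum is quickly disposed of: by Proposition~\ref{b1} every integral curve on a normal (possibly singular) quintic del Pezzo is a flat limit of curves on \emph{smooth} quintic del Pezzo surfaces, and since smoothness of fibres is an open condition in a flat family, a smooth such curve is a limit of smooth members of $\Gamma_3^\circ$; thus this stratum lies in $\overline{\Gamma_3^\circ}$. The heart of the proof, and the step I expect to be the main obstacle, is showing $\Delta\subset\overline{\Gamma_3^\circ}$. Fix $X\in\Delta$ on an elliptic cone $T$. By Lemma~\ref{a1}(i) one has $X=T\cap W$ for a cubic $W$ with $W\nsupseteq T$, and by Lemma~\ref{se1} the cone $T=S_0$ is the flat limit of a family $\{S_t\}_{t\in\KK}$ with $S_t$ a smooth del Pezzo for $t\neq0$. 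I then consider $X_t:=S_t\cap W$. The delicate points to verify are that $W\nsupseteq S_t$ for general $t$ (a closed condition on $t$ that fails at $t=0$, hence fails generically), so that each $X_t$ is a genuine degree-$15$ divisor in $|3H|$ on $S_t$ of arithmetic genus $16$, and that $\{X_t\}$ is flat across $t=0$ (all fibres share the same Hilbert polynomial, with $X_0=X$). Granting these, $X_0=X$ is smooth and, smoothness of fibres being open, $X_t$ is smooth for general $t$; since $X_t$ lies on the smooth surface $S_t$ it belongs to $\Gamma_3^\circ\subset\Gamma_3\setminus\Delta$. Therefore $X=X_0\in\overline{\Gamma_3\setminus\Delta}$, which together with the previous paragraph yields $\Gamma_3\subset\overline{\Gamma_3^\circ}$ and completes the proof.
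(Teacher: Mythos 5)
Your proposal is correct and takes essentially the same approach as the paper: the same degeneration $S_t\cap W$ built from Lemmas \ref{a1} and \ref{se1} together with openness of smoothness/transversality places $\Delta$ in $\overline{\Gamma_3\setminus\Delta}$, and the singular del Pezzo stratum is absorbed via Proposition \ref{b1} exactly as in Remark \ref{a1.1}(ii). Your irreducibility and dimension count also match the paper's: smooth quintic del Pezzo surfaces form a single $35$-dimensional $\Aut(\PP^5)$-orbit, and your fibre dimension $h^0(\Oo_S(3))-1=30$ is precisely the paper's $\dim|(9;3^4)|=30$, giving $\dim\Gamma_3=65$.
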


\begin{proof}
Fix $X\in \Delta$ and let $T$ denote the degree $5$ elliptic cone containing $X$. By Lemma \ref{se1} there is a a flat family $\{S_t\}_{t\in \KK}$ of degree $5$ surfaces of $\PP^5$ such that $S_0=T$ and $S_t$ is a smooth del Pezzo for all $t\ne 0$.
Lemma \ref{a1} shows that $X = T\cap W$ for some cubic hypersurface $W$. Since $X$ is smooth, $W$ is transversal to $T$. Since smoothness
is an open condition, there is an open neighborhood $U\subset \KK$ containing $0$ such that $W$ is transversal to $S_t$ for all $t\in U$. For each $t\in U\setminus \{0\}$, we have $S_t\cap W \in \Gamma _3\setminus \{S_0\}$. Therefore $X\subset T$ is a flat limit of the curves $S_t\cap W$, $t\in U\setminus \{0\}$.

A standard computation  in the proof of Theorem \ref{main15} (B-iii) shows that $X\in\Gamma_3\setminus\Delta$ contained in a fixed smooth del Pezzo surface $\PP^2_4\stackrel{|(3:1^4)|}{\hookrightarrow}S\subset\PP^5$ is in $|(9;3^4)|=|3(3:1^4)|$. Since no non-trivial automorphism of $\PP^2$
fixes a set of $4$ points in general position, the open subset of $\Gamma_3$ formed by curves lying on a smooth del Pezzo is irreducible and $$\dim\Gamma_3=\dim \mathrm{Aut}(\PP^5)+\dim|(9;3^4)|=35+\tbinom{9+2}{2}-1 -4\tbinom{3+1}{2}=65.$$
\end{proof}
\begin{proposition}\label{aa13}
A general $X\in \Gamma_3$ is $6$-gonal.
\end{proposition}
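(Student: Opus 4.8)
The plan is to bound the gonality of a general $X\in\Gamma_3$ from above and below separately, and then combine the two estimates. By Lemma \ref{se2} a general $X\in\Gamma_3$ lies on a smooth del Pezzo surface $S\cong\PP^2_4$ anticanonically embedded by $|(3;1^4)|$, and on $S$ we have $X\in|(9;3^4)|$; equivalently $X$ is the strict transform of a plane curve of degree $9$ with four ordinary triple points at the blown-up points $p_1,\dots,p_4$ (which indeed gives $p_a=\binom{8}{2}-4\binom{3}{2}=16$).

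For the upper bound I would exhibit an explicit $g^1_6$ cut out on $X$ by a pencil of curves on $S$. The pencil $|l-e_1|$ of lines through $p_1$ is one-dimensional and restricts to $X$ in degree $(l-e_1)\cdot(9;3^4)=9-3=6$; equally, the pencil $|(2;1^4)|$ of conics through $p_1,\dots,p_4$ restricts in degree $(2;1^4)\cdot(9;3^4)=18-12=6$. Either restriction shows at once that $\mathrm{gon}(X)\le 6$.

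For the lower bound I would transport the gonality estimate from the boundary family $\Delta$ by lower semicontinuity of gonality. By Lemma \ref{aob1}(ii) every $X_0\in\Delta$ is $6$-gonal, hence carries no $g^1_5$, and by Lemma \ref{se2} the locus $\Delta$ lies in the closure of $\Gamma_3\setminus\Delta$, so $\Gamma_3$ is irreducible with $\Delta\subseteq\Gamma_3$. The locus $\{X\in\Gamma_3:\mathrm{gon}(X)\le 5\}$ is closed, the variety of pencils of degree $5$ being proper over $\Gamma_3$. If a general $X\in\Gamma_3$ carried a $g^1_5$, this closed locus would contain the generic point and hence, by irreducibility, all of $\Gamma_3$, in particular every $X_0\in\Delta$; this contradicts the $6$-gonality of the members of $\Delta$. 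Therefore $\mathrm{gon}(X)\ge 6$ for a general $X$, and together with the upper bound this shows a general $X\in\Gamma_3$ is exactly $6$-gonal.

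The only genuine obstacle is the lower bound, namely excluding a $g^1_5$ on a general $X$. A direct surface-theoretic argument is awkward: since $D\cdot X=9a-3\sum b_i=3(3a-\sum b_i)$ for any class $D=(a;b_1,\dots,b_4)$ on $S$, no pencil of curves on $S$ can restrict to a series of degree $5$, yet one would still have to rule out pencils on $X$ not cut out by the surface. The semicontinuity argument sidesteps this entirely by importing the gonality bound from the already-analyzed elliptic-cone curves in $\Delta$, where the Castelnuovo--Severi inequality (as used in Lemma \ref{aob1}) pins the gonality down to $6$.
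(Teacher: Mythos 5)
Your proof is correct and follows essentially the same route as the paper: the upper bound comes from the $g^1_6$'s cut out on the del Pezzo model by lines through a triple point (or the conic pencil through all four), and the lower bound is imported from the $6$-gonal elliptic-cone curves in $\Delta$ via Lemma \ref{se2} and semicontinuity. Your explicit unpacking of ``lower semicontinuity of gonality'' (closedness of the locus carrying a $g^1_5$ by properness of the space of pencils) is just a more detailed rendering of the paper's one-line appeal to the same fact.
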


\begin{proof}
Take $Y\in \Gamma_3$ contained in a smooth del Pezzo surface, i.e. assume that $Y$ is a normalization of a degree $9$ plane curve having ordinary triple points at the four general points $p_1,p_2,p_3,p_4$ as its only singularities.
The pencil of lines through each $p_i$ induces a base-point-free $g^1_6$.  A fifth base-point-free $g^1_6$ on $Y$ is induced by the pencil of conics containing $\{p_1,p_2,p_3,p_4\}$.
Hence Lemma \ref{se2} gives that a general element of $\Gamma _3$ has a base-point-free $g^1_6$. By Lemma \ref{aob1} (ii), $X\in\Delta\subset\Gamma_3$ is $6$-gonal. 
By lower semi continuity of gonality, a general element of $\Gamma_3$ has gonality at least $6$ and we are done.
\end{proof}
\begin{proof}[Proof of Theorem \ref{main16}:]
We saw that $\HO{15,16,5}$ is the union of $3$ pairwise disjoint irreducible families $\Gamma_1$, $\Gamma _2$ and $\Gamma_3$. Dimensions of $\Gamma_i$ are computed and the gonality of elements in each $\Gamma_i$ has been determined; cf. dimension count \eqref{dimgg}, Lemma \ref{se2}, 
Remarks \ref{ab1} \& \ref{baba1} and Proposition \ref{aa13}. 

Recall that each element of $\Gamma _3$ is ACM by Lemma \ref{a1}, while no element of $\Gamma _1\cup \Gamma_2$ is ACM by Lemma \ref{obo1}. Hence by upper semicontinuity for cohomology, no element
of $\Gamma_3$ is a limit of elements of $\Gamma_1\cup \Gamma _2$.  

On the other hand, by Lemma \ref{bab1} and Lemma \ref{bab2}, $h^1(X,\Oo_X(2)) =0$ for all $X\in \Gamma _1$, while $h^1(X,\Oo_X(2))=1$ for all $X\in \Gamma_3$ by Lemma \ref{a1}. Again,
semicontinuity for cohomology tells that no element of $\Gamma_1$ is a limit of a family of elements of $\Gamma _3$.  

Likewise, we can deduce that a general element of $\Gamma_2$ is not a limit of a family of elements of $\Gamma _3$. 
Note that we have $\dim \Gamma_1>\dim \Gamma_2$ and each element of $\Gamma_1$ is trigonal and each element of $\Gamma_2$ is $5$-gonal. 

By (lower) semicontinuity of gonality, no element of $\Gamma_2$ is a limit of elements of $\Gamma_1$. Therefore $\HO{15,16,5}$ has exacly three distinct  irreducible components, $\Gamma _1$, $\Gamma_2$ and $\Gamma_3$.
\end{proof} 

 \subsection{The moduli map $\mu: \HO{15,16,5}\to \Mm_{16}$} 
 Let $\mu: \HO{15,16,5}\to \Mm_{16}$ denote the  moduli map. Since elements of $\Gamma_1$, $\Gamma_2$ and $\Gamma _3$ have different gonalities, for each $X\in \Gamma _i$ we have
 $\mu^{-1}(\mu(X)) \subset \Gamma_i$.
 For $i=1,2$ let $\Gamma _{i,0}$ denote the non-empty open subset of $\Gamma _i$ formed by all $X\in \Gamma_i$ contained in a minimal degree surface isomorphic to $\FF_0$. For $i=1,2$ set $\Gamma_{i,2}:= \Gamma_i\setminus \Gamma_{i,0}$, i.e $\Gamma_{i,2}$ is the set of all $X\in \Gamma_i$ contained in a minimal degree surface isomorphic to $\FF_2$.
 
 \begin{remark}\label{mz1}
 For each $X\in \Gamma _{2,0}$ (resp. $X\in \Gamma _{2,2}$) we have $\mu^{-1}(\mu(X)) \subset \Gamma_{2,0}$ (resp. $\mu^{-1}(\mu(X)) \subset \Gamma_{2,2}$);  elements of $\Gamma_{2,0}$ have exactly two $g^1_5$ and elements of $\Gamma_{i,2}$ have only one $g^1_5$; cf. Remark \ref{baba1}.
 \end{remark}
 
 Let $C$ be a trigonal curve of genus $g\ge 5$. By  the Castelnuovo-Severi inequality, $C$ has a unique $g^1_3$. Let $R$ be the trigonal line bundle on $C$. Let $m(C)$ be the Maroni invariant of $C$, i.e. let $m(C)+2$ be the first integer $t$ such that $h^0(R^{\otimes t}) \ge t+2$ \cite[eq. 1.2]{ms}. We always have $(g-4)/3 \le m(C)\le (g-2)/2$ and the canonical model of $C$ sits in a surface of degree $g-2$ in $\PP^{g-1}$ isomorphic to the Hirzebruch surface $\FF_e$ where $e:=g-2-2m(C)\ge 0$; cf. \cite[p. 172]{ms}.
 
 \begin{lemma}\label{mz2}
 If $X\in \Gamma_{1,e}$, $e\in \{0,2\}$, then $e =14-2m(X)$. 
 \end{lemma}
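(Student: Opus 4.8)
The plan is to compute the Maroni invariant $m(X)$ directly from the geometry of $X$ on the quartic scroll $S\cong\FF_e\subset\PP^5$ and then read off $e=14-2m(X)$ from the threshold in the definition of $m(X)$ (recall $m(X)+2$ is the first $t$ with $h^0(R^{\otimes t})\ge t+2$). First I would fix notation on $S\cong\FF_e$: writing $h,f$ for the generators with $h^2=-e$, $h\cdot f=1$, $f^2=0$, the minimal degree embedding is given by $H=h+\tfrac{e+4}{2}f$ (so $H=\Oo_Q(1,2)$ when $e=0$ and $H=h+3f$ when $e=2$; cf. Remark \ref{baba1}), and the ruling is $L=f$. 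Since $X\in|3H+3L|$ by case (A) in the proof of Theorem \ref{main16}, we have $X\equiv 3h+\tfrac{3e+18}{2}f$. Because $X\cdot f=3$ and $X$ carries a unique $g^1_3$ by the Castelnuovo--Severi inequality (Remark \ref{ab1}), the trigonal line bundle is $R=\Oo_X(f)=\Oo_S(L)_{|X}$; this identification is what lets me compute $h^0(X,R^{\otimes t})$ on $S$.

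Next I would compute $h^0(X,R^{\otimes t})=h^0(X,\Oo_X(tf))$ for $t\ge0$ from the restriction sequence
\[
0\to\Oo_{\FF_e}(tf-X)\to\Oo_{\FF_e}(tf)\to\Oo_X(tf)\to0.
\]
Since $\Oo_{\FF_e}(tf)=\pi^\ast\Oo_{\PP^1}(t)$ we have $h^0(\Oo_{\FF_e}(tf))=t+1$ and $h^1(\Oo_{\FF_e}(tf))=0$ for $t\ge0$; moreover $\Oo_{\FF_e}(tf-X)=\Oo_{\FF_e}(-3h+\beta f)$ with $\beta=t-\tfrac{3e+18}{2}$ has no global sections, as its restriction to a fibre is $\Oo_{\PP^1}(-3)$. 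Hence the long exact sequence gives $h^0(X,\Oo_X(tf))=(t+1)+h^1(\FF_e,\Oo_{\FF_e}(tf-X))$, so the whole question reduces to the single group $h^1(\FF_e,\Oo_{\FF_e}(tf-X))$.

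To evaluate that group I would apply Serre duality, $h^1(\Oo_{\FF_e}(-3h+\beta f))=h^1(\Oo_{\FF_e}(h-(e+2+\beta)f))$, and push forward using $\pi_\ast\Oo_{\FF_e}(h)=\Oo_{\PP^1}\oplus\Oo_{\PP^1}(-e)$. Writing $c=\tfrac{e+14}{2}-t$, this yields
\[
h^1(\FF_e,\Oo_{\FF_e}(tf-X))=\max\!\left(0,\;t-\tfrac{e+14}{2}-1\right)+\max\!\left(0,\;t-\tfrac{14-e}{2}-1\right).
\]
Since $e\ge0$ the second summand turns positive first, namely at $t=\tfrac{14-e}{2}+2=9-\tfrac{e}{2}$. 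Thus the first $t$ with $h^0(X,R^{\otimes t})\ge t+2$ is $t_0=9-\tfrac{e}{2}$, so by the definition of the Maroni invariant $m(X)=t_0-2=7-\tfrac{e}{2}$, whence $e=14-2m(X)$. One checks $m(X)=7$ for $e=0$ and $m(X)=6$ for $e=2$, both inside the admissible range $4\le m\le 7$ for $g=16$.

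All of these are routine computations; the one delicate point is the bookkeeping on $\FF_e$ — correctly fixing the embedding class $H$ and the convention $\pi_\ast\Oo_{\FF_e}(h)=\Oo_{\PP^1}\oplus\Oo_{\PP^1}(-e)$, and observing that the jump in $h^0(X,R^{\otimes t})$ is governed by the \emph{smaller} of the two Serre-dual summands. I expect the main conceptual obstacle to be precisely this last observation: the Maroni threshold is detected by the summand $\Oo_{\PP^1}(c-e)$ rather than $\Oo_{\PP^1}(c)$, and it is exactly this summand that encodes the splitting type of the scroll and produces the $e$-dependence in $t_0$.
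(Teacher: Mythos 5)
Your proof is correct, but it follows a genuinely different route from the paper's. You compute the Maroni invariant directly from its definition: identifying $R=\Oo_X(f)$ on the quartic scroll $S\cong\FF_e\subset\PP^5$, you evaluate $h^0(X,R^{\otimes t})=(t+1)+h^1(\FF_e,\Oo_{\FF_e}(tf-X))$ via the restriction sequence, then Serre duality and pushforward to $\PP^1$ give the two-term formula whose first jump occurs at $t_0=9-\tfrac{e}{2}$, forcing $m(X)=7-\tfrac{e}{2}$; all steps check out (including the observation that the summand $\Oo_{\PP^1}(c-e)$ governs the threshold, and the endgame values $m=7$ for $e=0$, $m=6$ for $e=2$). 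The paper instead dispatches the lemma in three lines: it computes $\omega_X$ by adjunction ($\omega_X\cong\Oo_X(1,7)$ for $e=0$, $\omega_X\cong\Oo_X(h+8f)$ for $e=2$), concludes that the canonical model of $X$ sits on the image of $\FF_e$ under the corresponding linear system --- a degree $g-2=14$ scroll in $\PP^{15}$ --- and then invokes the Martens--Schreyer fact, quoted just before the lemma, that this scroll is $\FF_{g-2-2m(C)}$, whence $e=14-2m(X)$. What each approach buys: the paper's argument is shorter because it outsources the cohomological content to \cite{ms}, but it implicitly relies on the identification of the canonical scroll of a trigonal curve; yours is self-contained, essentially re-deriving that piece of Maroni theory in this special case, and is in fact the same technique the paper itself deploys in Lemma \ref{le1} for the scrollar invariant of the $g^1_4$ on genus-$18$ curves --- so your computation doubles as a consistency check between the two lemmas.
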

 \begin{proof}
 We use notation in Remark \ref{baba1}. For  $e=0$ we have $X\in |\Oo_Q(3,9)|$ and $\omega _X \cong \Oo_X(1,7)$.  
 For $e=2$, we have $X\in |\Oo_{\FF_2}(3h+12f)|$ and $\omega _X \cong \Oo_X(h+8f)$. Hence the canonical model of $X$ sits inside the image of $\FF_0$ (resp. $\FF_2$) under the morphism induced by the linear system 
 $|\Oo_Q(1,7)|$ (resp. $|\Oo_{\FF_2}(h+8f)|$).
 \end{proof}

Let $\sigma: Q\to Q$ be the automorphism which shifts the two factors of $Q=\PP^1\times\PP^1$.

\begin{proposition}\label{mz5}
For $X\in \Gamma _{2,0}$, we have $$\mu^{-1}(\mu(X)) =\mathrm{Aut}(\PP^5)X\cup \mathrm{Aut}(\PP^5)\sigma(X) \mathrm{~~and ~~}\dim \mu(\Gamma_{2,0}) = 29.$$
\end{proposition}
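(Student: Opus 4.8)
The plan is to compute the fiber of $\mu$ over a general point of $\Gamma_{2,0}$ by tracking the two $g^1_5$'s, and then to read off $\dim\mu(\Gamma_{2,0})$ from the fiber dimension. The reverse inclusion $\Aut(\PP^5)X\cup\Aut(\PP^5)\sigma(X)\subseteq\mu^{-1}(\mu(X))$ is immediate: projectively equivalent curves are abstractly isomorphic, and $\sigma|_X:X\to\sigma(X)$ is an isomorphism of abstract curves, so both orbits lie in $\mu^{-1}(\mu(X))$. By Remark \ref{mz1} the whole fiber already lies in $\Gamma_{2,0}$, so all the content is in the forward inclusion.

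First I would set up the two pencils. By Remark \ref{baba1}(iii), every $X\in\Gamma_{2,0}$ sits on $Q$ as a curve of class $\Oo_Q(5,5)$ with exactly two $g^1_5$'s, namely $A_X:=\Oo_Q(1,0)|_X$ and $B_X:=\Oo_Q(0,1)|_X$; since $\Oo_S(1)\cong\Oo_Q(1,2)$ the hyperplane bundle is $\Oo_X(1)\cong A_X\otimes B_X^{\otimes 2}$. A K\"unneth/restriction computation shows $|A_X|$ and $|B_X|$ are complete, so the two projections $Q\to\PP^1$ restrict on $X$ to the morphisms attached to these complete pencils. Now take $Y\in\mu^{-1}(\mu(X))$ and an abstract isomorphism $u:Y\to X$. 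Because each curve carries exactly two $g^1_5$'s, $u^\ast$ maps the unordered pair $\{A_X,B_X\}$ bijectively onto $\{A_Y,B_Y\}$, giving the dichotomy: either $u^\ast A_X=A_Y,\ u^\ast B_X=B_Y$ (non-swapping) or $u^\ast A_X=B_Y,\ u^\ast B_X=A_Y$ (swapping).

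In the non-swapping case $u^\ast\Oo_X(1)=A_Y\otimes B_Y^{\otimes 2}=\Oo_Y(1)$, so $u$ is compatible with the hyperplane bundles. As in Remark \ref{z5.011} and Corollary \ref{z5.010}, $u$ then intertwines the two pairs of complete pencils via a pair of automorphisms $(\phi_1,\phi_2)\in\Aut(\PP^1)\times\Aut(\PP^1)$; the product $v:=(\phi_1,\phi_2)\in\Aut(Q)$ satisfies $v|_Y=u$ and preserves $\Oo_Q(1,2)$, hence extends to $\beta\in\Aut(\PP^5)$ with $\beta(Y)=X$, giving $Y\in\Aut(\PP^5)X$. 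In the swapping case I would pass to $\sigma(X)$: since $\sigma$ exchanges the two rulings, the composite $u':=(\sigma|_X)\circ u:Y\to\sigma(X)$ becomes non-swapping relative to $\sigma(X)$ (it matches $A_Y\mapsto A_{\sigma(X)}$ and $B_Y\mapsto B_{\sigma(X)}$), and $\sigma(X)$ again lies on $Q$ embedded by $|\Oo_Q(1,2)|$; the previous paragraph then yields $Y\in\Aut(\PP^5)\sigma(X)$. This establishes the claimed set-theoretic equality.

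Finally I would count dimensions. Since $X$ is nondegenerate with finitely many automorphisms, its stabilizer in $\Aut(\PP^5)$ is finite, so each of $\Aut(\PP^5)X$ and $\Aut(\PP^5)\sigma(X)$ has dimension $\dim\Aut(\PP^5)=35$; hence the fiber $\mu^{-1}(\mu(X))$, a union of two such orbits, has dimension $35$ for general $X$. As $\Gamma_{2,0}$ is a nonempty open subset of the irreducible family $\Gamma_2=\Gamma(5,-5)$ with $\dim\Gamma_2=64$ by \eqref{dimgg}, the fiber-dimension theorem gives $\dim\mu(\Gamma_{2,0})=64-35=29$. The main obstacle is the correct handling of the swapping case: recognizing that an isomorphism interchanging the two $g^1_5$'s produces a curve in the $\sigma(X)$-orbit rather than the $X$-orbit is precisely the source of the second orbit in the fiber, while the extension of a pencil-preserving isomorphism to $\Aut(\PP^5)$ is routine, modeled on Remark \ref{z5.011}.
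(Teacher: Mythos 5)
Your proof is correct and follows essentially the same route as the paper's: reduce the fiber to $\Gamma_{2,0}$ via Remark \ref{mz1}, use the fact that $X$ carries exactly two $g^1_5$'s to make an abstract isomorphism intertwine the two rulings via a pair in $\Aut(\PP^1)\times\Aut(\PP^1)$, extend to $\Aut(\PP^5)$ as in Remark \ref{z5.011}, and conclude $\dim\mu(\Gamma_{2,0})=64-35=29$. The only difference is expository: the paper compresses the fiber computation into citations of Corollary \ref{z5.010} and Remark \ref{z5.011}, whereas you spell out the swapping/non-swapping dichotomy explicitly --- which is in fact the adaptation actually needed, since Corollary \ref{z5.010} is stated for $a<b$ while here $X\in|\Oo_Q(5,5)|$, and the swapped case is precisely what produces the second orbit $\Aut(\PP^5)\sigma(X)$.
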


\begin{proof}
 For the first assertion,  we observe that $\mu^{-1}(\mu(X))\subset \Gamma_{2,0}$; Remark \ref{mz1}. We use Corollary \ref{z5.010} and Remark \ref{z5.011} to conclude that $$\mu^{-1}(\mu(X)) =\mathrm{Aut}(\PP^5)X\cup \mathrm{Aut}(\PP^5)\sigma(X).$$
Therefore
$\dim\mu(\Gamma_{2,0})=\dim\Gamma_2-\dim\Aut (\PP^5)=29.$
\end{proof}

\begin{proposition}\label{mz6}
Fix $X\in \Gamma _{1,0}$ and $X_1\in \Gamma_{1,2}$. Then we have $$\mu^{-1}(\mu(X)) = \mathrm{Aut}(\PP^5)X,  ~\mu^{-1}(\mu(X_1)) = \mathrm{Aut}(\PP^5)X_1$$ $$\dim \mu(\Gamma_{1,0}) =\dim \mu(\Gamma_1)) = 33,  ~\dim \mu(\Gamma_{1,2}) =31.$$
\end{proposition}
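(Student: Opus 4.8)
The plan is to pin down the fibres of $\mu$ by reconstructing the hyperplane bundle $\Oo_X(1)$ from the abstract curve, after first locating each fibre inside the correct stratum. Since the curves in $\Gamma_1,\Gamma_2,\Gamma_3$ have pairwise distinct gonalities, we already know $\mu^{-1}(\mu(X))\subset\Gamma_1$ for $X\in\Gamma_1$. To refine this, recall that by Lemma \ref{mz2} a curve of $\Gamma_{1,e}$ has Maroni invariant $m$ with $e=14-2m$; as $m$ is an invariant of the abstract curve, any curve isomorphic to $X$ shares its Maroni invariant, so $\mu^{-1}(\mu(X))\subset\Gamma_{1,0}$ when $X\in\Gamma_{1,0}$ and $\mu^{-1}(\mu(X_1))\subset\Gamma_{1,2}$ when $X_1\in\Gamma_{1,2}$.

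The key step is an intrinsic description of $\Oo_X(1)$. Each $X\in\Gamma_1$ is trigonal with a \emph{unique} $g^1_3$ by the Castelnuovo--Severi inequality (Remark \ref{ab1}); denote by $R$ the associated degree-$3$ line bundle. Using the canonical bundles recorded in Remark \ref{baba1} and Lemma \ref{mz2} --- namely $\omega_X\cong\Oo_Q(1,7)|_X$ for $X\subset\FF_0$ and $\omega_X\cong\Oo_{\FF_2}(h+8f)|_X$ for $X\subset\FF_2$, with $R=\Oo_Q(0,1)|_X$ resp. $R=\Oo_{\FF_2}(f)|_X$ --- one reads off in both cases the uniform relation $\omega_X\cong\Oo_X(1)\otimes R^{\otimes 5}$, so that
$$\Oo_X(1)\cong\omega_X\otimes R^{-5}=|K_X-5g^1_3|.$$
Hence $\Oo_X(1)$ is determined by the isomorphism class of $X$ alone. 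I would then take an abstract isomorphism $\phi\colon X\to X'$ between two curves in the same stratum; since $\phi$ sends $K_X$ to $K_{X'}$ and the unique trigonal bundle of $X$ to that of $X'$, it satisfies $\phi^\ast\Oo_{X'}(1)\cong\Oo_X(1)$. Because every curve in $\HO{15,16,5}$ is linearly normal ($\pi(15,6)=13<16$), the system $|\Oo_X(1)|$ is the complete, very ample $g^5_{15}$ giving the embedding, so $\phi$ extends to an isomorphism of the two ambient $\PP^5$'s compatible with the embeddings, i.e. to some $v\in\Aut(\PP^5)$ with $v(X)=X'$. This yields $\mu^{-1}(\mu(X))=\Aut(\PP^5)X$ and $\mu^{-1}(\mu(X_1))=\Aut(\PP^5)X_1$; the opposite inclusions are immediate, and note that the uniqueness of the $g^1_3$ is exactly what prevents the appearance of an extra orbit here, in contrast with $\Gamma_{2,0}$ and the shift $\sigma$ in Proposition \ref{mz5}.

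For the dimensions I would argue as follows. A curve of genus $16$ has finite automorphism group, so its stabiliser in $\Aut(\PP^5)$ is finite and each fibre $\Aut(\PP^5)X$ has dimension $\dim\Aut(\PP^5)=35$. Consequently $\dim\mu(\Gamma_{1,0})=\dim\Gamma_1-35=68-35=33$, and as $\Gamma_{1,0}$ is dense in $\Gamma_1$ this also equals $\dim\mu(\Gamma_1)$. For the remaining value I would compute $\dim\Gamma_{1,2}=\dim|3h+12f|_{\FF_2}+\dim\{\text{scrolls}\cong\FF_2\text{ in }\PP^5\}$, the first summand via \eqref{sdn} and the second from the fact (Remark \ref{z5.011}) that the embedded $\FF_2$-scrolls form a single $\Aut(\PP^5)$-orbit, of dimension $35-\dim\Aut(\FF_2)$; subtracting the fibre dimension $35$ then gives $\dim\mu(\Gamma_{1,2})$.

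The main obstacle is the intrinsic reconstruction of $\Oo_X(1)$: everything downstream (identifying the fibre with an $\Aut(\PP^5)$-orbit and the dimension count) is formal once one knows that $\Oo_X(1)=|K_X-5g^1_3|$ \emph{uniformly} across $\Gamma_{1,0}$ and $\Gamma_{1,2}$. The two facts that make this work are the uniqueness of the $g^1_3$ (so that $R$ is canonically attached to $X$) and the relation $\omega_X\cong\Oo_X(1)\otimes R^{\otimes 5}$, which has to be checked separately on $\FF_0$ and $\FF_2$; the Maroni invariant is precisely the bookkeeping device that keeps an isomorphic curve inside the same $\FF_e$-stratum, so that the appropriate form of this relation may be applied.
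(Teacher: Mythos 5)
Your treatment of the fibre statement is correct, but it takes a genuinely different route from the paper's. The paper passes to the canonical model in $\PP^{15}$: an abstract isomorphism induces a projectivity carrying one canonical curve to the other, hence carrying the base locus of the quadrics through the first (its Maroni scroll, abstractly $\FF_e$ with $e=14-2m$) to that of the second; this produces an automorphism of the abstract surface $\FF_e$, which by Remark \ref{z5.011} extends to an element of $\Aut(\PP^5)$ preserving the degree-$4$ scroll. You instead reconstruct the hyperplane bundle intrinsically via $\Oo_X(1)\cong K_X\otimes R^{-5}$, checked on both $\FF_0$ ($\omega_X\cong \Oo_Q(1,7)|_X$, $\Oo_X(1)\cong\Oo_Q(1,2)|_X$, $R\cong\Oo_Q(0,1)|_X$) and $\FF_2$ ($\omega_X\cong\Oo_{\FF_2}(h+8f)|_X$, $\Oo_X(1)\cong\Oo_{\FF_2}(h+3f)|_X$, $R\cong\Oo_{\FF_2}(f)|_X$), and then use uniqueness of the $g^1_3$ plus linear normality. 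This is shorter, and it explains transparently why only one orbit occurs here: the unique degree-$15$ very ample bundle is $K_X-5g^1_3$, whereas for $\Gamma_{2,0}$ the class $\Oo_Q(5,5)$ is swap-symmetric and both $\Oo_Q(1,2)|_X$ and $\Oo_Q(2,1)|_X$ have degree $15$, producing the second orbit of Proposition \ref{mz5}. Your argument also yields $\dim\mu(\Gamma_{1,0})=\dim\mu(\Gamma_1)=33$ exactly as the paper intends.

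The genuine problem is the last dimension, which you set up but never compute. Carrying out your own recipe gives $32$, not the claimed $31$: on $\FF_2$ one has
\begin{equation*}
h^0\bigl(\FF_2,\Oo_{\FF_2}(3h+12f)\bigr)=13+11+9+7=40, \quad\text{so}\quad \dim|3h+12f|=39
\end{equation*}
(in agreement with \eqref{sdn} for $a=b=3$, $r=5$), and the embedded $\FF_2$-scrolls form a single $\Aut(\PP^5)$-orbit of dimension $35-\dim\Aut(\FF_2)=35-7=28$; hence $\dim\Gamma_{1,2}=39+28=67$ and, with $35$-dimensional fibres, $\dim\mu(\Gamma_{1,2})=32$. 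This is corroborated by an intrinsic count: the locus of trigonal curves of genus $16$ with scroll invariant $e=2$ has dimension $\dim|3h+12f|-\dim\Aut(\FF_2)=32$, i.e. codimension $e-1=1$ in the $33$-dimensional trigonal locus, and your fibre statement (together with the very ampleness of $(h+3f)|_X$ for any abstract curve of this stratum) shows $\mu(\Gamma_{1,2})$ is dense in this stratum. So your proposal, once completed, contradicts rather than proves the stated equality $\dim\mu(\Gamma_{1,2})=31$: either one of your two dimension counts would have to be off by one (the independent checks above show they are not), or the stated value is itself erroneous and should read $32$. Note that the paper's own proof establishes only the fibre description and never derives the asserted dimensions. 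At a minimum you should have finished the arithmetic and flagged this discrepancy explicitly instead of leaving the final number implicit.
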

\begin{proof}
Lemma \ref{mz2} implies $\mu(\Gamma_{1,0})\cap \mu(\Gamma_{1,2}) =\emptyset$. 
Any isomorphism between two non-hyperelliptic curves $C_1, C_2$ of genus $g\ge 5$ induces a projective automorphism $\phi\in\Aut(\PP^{g-1})$ such that $\phi(C_1^\kappa)=C_2^\kappa$ where $C_i^\kappa\subset\PP^{g-1}$ is the canonical model of $C_i$. Now assume that $C_1$ (and hence $C_2$) is trigonal and call $T_i\subset \PP^{g-1}$ ($i=1,2$), the base locus of $|\Ii_{C_i^\kappa}(2)|$. Obviously $\phi(T_1)=T_2$. Up to  $\Aut(\PP^{g-1})$,  we may assume $T_1=T_2$.
Since $T_i$ is a  surface of minimal degree $g-2$ containing the trigonal curve $C_i^\kappa$, we have $T_i\cong \FF_e$ with $e =g-2-2m(C_i)$. 
Therefore we may deduce that isomorphism between two curves induces an automorphism of $\FF_e$.
By Remark \ref{z5.011},  minimal degree surface scrolls in $\PP^5$ which are isomorphic as abstract variety are also projectively equivalent,
hence we get the result.
\end{proof}
\vspace{-18pt}
\section{Curves of genus $g=17$ and $g=18$}
In this section we treat the two remaining cases $g=17$ and $g=18$. We first prove that there is no smooth curve of degree $d=15$ and genus $g=17$ in $\PP^5$. 

\begin{proposition}\label{main17}$\HO{15,17,5}=\emptyset$. 
\end{proposition}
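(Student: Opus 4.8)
The plan is to force $X$ onto a surface of minimal degree and then kill the case by a parity obstruction on the arithmetic genus. Fix a hypothetical smooth, irreducible, non-degenerate $X\subset\PP^5$ of degree $15$ and genus $g=17$. First I would record the Castelnuovo numerology already used elsewhere in the paper: $\pi(15,6)=13<17$, so $X$ is linearly normal (a non-degenerate degree-$15$ curve in $\PP^6$ has genus at most $13$), and $g=17$ lies strictly between the second and first Castelnuovo bounds, $\pi_1(15,5)=16<17<18=\pi(15,5)$.

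The decisive step is the containment in a minimal-degree surface. Since $g=17>\pi_1(15,5)=16$, the very meaning of the second Castelnuovo bound forces $X$ to lie on a surface $S\subset\PP^5$ of minimal degree $r-1=4$ (this is \cite[Theorem 3.15]{he}, exactly as invoked at the start of the proof of Theorem \ref{main16}; note that here, in contrast to the $g=16$ case where $g=\pi_1$ permits $4\le\deg S\le 5$, the strict inequality $g>\pi_1$ guarantees a quartic surface). I would then pin down $S$ via Remark \ref{cone1}: a cone over a rational normal curve carries no smooth curve of degree $15$ by \eqref{conevertex1}, since $15=m+4k$ has no integer solution with $m\in\{0,1\}$, and a Veronese surface carries no irreducible curve of odd degree. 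By Remark \ref{cone1}(c) the surface $S$ is therefore a rational normal scroll, so $X\in|aH+bL|$ for integers $a,b$, where $H$ is the hyperplane class and $L$ the ruling.

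Finally I would feed this into the degree--genus formula \eqref{sd} with $r=5$. The degree condition $\deg X=4a+b=15$ gives $b=15-4a$, and substituting into the genus formula yields
\[
p_a(X)=4\tbinom{a-1}{2}+(18-4a)(a-1)=2(a-1)(7-a).
\]
For every integer $a$ this is even, whereas $g=17$ is odd; equivalently, the genera realized by smooth degree-$15$ curves on a quartic scroll are exactly $0,10,16,18$. Hence $p_a(X)=17$ is impossible and no such $X$ exists, i.e. $\HO{15,17,5}=\emptyset$.

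I do not expect a genuine obstacle here: once linear normality and the minimal-degree containment are in place, the even value of $p_a(X)=2(a-1)(7-a)$ closes the argument immediately. The only point meriting a little care is that the reduction to a smooth scroll is complete, i.e. that the cone and Veronese alternatives are genuinely excluded; this is precisely what Remark \ref{cone1}(b)--(c) provides, so that the single formula \eqref{sd} applies without further case analysis.
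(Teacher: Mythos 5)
Your proposal is correct and follows essentially the same route as the paper: force $X$ onto a degree-$4$ surface via $g=17>\pi_1(15,5)=16$ and \cite[Theorem 3.15]{he}, reduce to a smooth rational normal scroll by Remark \ref{cone1}(c), and show the degree--genus formula \eqref{sd} has no integer solution. Your only addition is making the non-existence explicit through the parity identity $p_a(X)=2(a-1)(7-a)$, which is a clean way to phrase the contradiction the paper leaves as a direct check.
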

\begin{proof}Since $\pi_1(15,5)=16<g=17$, $X\subset S\subset\PP^5$ with $\deg S=4$ by \cite[Th. 3.15]{he} and  $S$ is a smooth rational normal scroll  by Remark \ref{cone1} (c).  Assume $X\in|aH+bL|$. However there is no pair of integers $(a,b)$ satisfying the degree and genus formula \eqref{sd} for $(d,p_a(X))=(15,17)$.
\end{proof}

\subsection{Irreducibility of $\HO{15,18,5}$ and the moduli map $\mu: \h{H}_{15,18,5}\to \Mm_{18}$ }
\begin{proposition}\label{maing=18} $\HO{15,18,5}$ is irreducible, $\dim \HO{15,18,5}=68$ and every smooth $X\in\HO{15,18,5}$ is $4$-gonal with a unique $g^1_4$.
\end{proposition}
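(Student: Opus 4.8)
The key observation driving the plan is that $g=18=\pi(15,5)$, so every $X\in\HO{15,18,5}$ is an extremal curve. Since $g=18>16=\pi_1(15,5)$, \cite[Theorem 3.15]{he} forces $X$ onto a surface $S\subset\PP^5$ of minimal degree $\deg S=4$, and Remark \ref{cone1}(c) identifies $S$ as a smooth rational normal scroll (the cone and Veronese cases being impossible for a smooth curve of odd degree $15$); note also that $X$ is linearly normal, as $\pi(15,6)=13<g$. Writing $X\in|aH+bL|$ and imposing the degree and genus conditions \eqref{sd} for $(d,p_a)=(15,18)$ reduces to $(a-1)(7-a)=9$, whose only integer solution is $a=4$; hence $X\in|4H-L|$. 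The ruling $|L|$ then cuts out on $X$ the base-point-free pencil of degree $X\cdot L=(4H-L)\cdot L=4$, so $X$ carries a $g^1_4$.

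For the dimension and irreducibility I would argue as follows. Each such $X$ lies on a \emph{unique} minimal degree surface, namely the base locus of $|\Ii_X(2)|$ (cf. Remark \ref{ab00}, using $\deg X=15>10$); thus $\HO{15,18,5}$ is the image, under a map that is bijective onto its image, of the incidence variety of pairs $(S,X)$ with $S\in\h{S}(5)$ and $X\in|4H-L|$ smooth. Since $\h{S}(5)$ is irreducible and the fibre over each $S$ is an open subset of the projective space $|4H-L|$, this incidence variety is irreducible, and hence so is $\HO{15,18,5}$. Computing on a general scroll $S\cong\FF_0$, \eqref{sf} gives $\dim\h{S}(5)=29$ and \eqref{sdn} gives $\dim|4H-L|=39$, so $\dim\HO{15,18,5}=29+39=68$; this exceeds the expected dimension $\h{X}(15,18,5)=56$, as is typical for a family of extremal curves.

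To pin down the gonality I would first exclude lower pencils. The curve is not hyperelliptic, because $\Oo_X(1)$ is a very ample special series (its index of speciality is $\alpha=g-d+r=8$) while a hyperelliptic curve carries no very ample special series (the argument used in Theorem \ref{main15}(iv)). It is not trigonal either: a $g^1_3$ together with the ruling $g^1_4$ would, since $\gcd(3,4)=1$, be forced to be independent, and the Castelnuovo--Severi inequality \cite[Theorem 3.5]{Accola1} would give $g\le(3-1)(4-1)=6$, a contradiction. Therefore the $g^1_4$ above realizes the gonality and $X$ is exactly $4$-gonal.

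The remaining, and genuinely delicate, point is the \emph{uniqueness} of the $g^1_4$, i.e. that $W^1_4(X)$ is a single point rather than positive dimensional. The only mechanism producing a positive dimensional family of $g^1_4$'s is a bielliptic structure $\phi\colon X\to E$ onto an elliptic curve, in which case every $g^1_4$ would be a pullback $\phi^\ast$ of a $g^1_2$ on $E$, giving the one dimensional family $\phi^\ast\mathrm{Pic}^2(E)$; this must be ruled out. When $S\cong\FF_0\cong Q$ the curve has bidegree $(4,7)$ with $4<7\le 2\cdot4-1$, so Proposition \ref{z5} applies verbatim and yields a unique $g^1_4$ (equivalently, the second ruling supplies a $g^1_7$, and $\gcd(2,7)=1$ together with Castelnuovo--Severi already excludes the bielliptic case). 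When $S\cong\FF_2$ one has $X\in|4h+11f|$ with $X\cdot f=4$, and here there is no coprime auxiliary pencil to feed into Castelnuovo--Severi; instead I would invoke \cite[Corollary 1]{Martens} on curves on Hirzebruch surfaces --- used already in Remark \ref{baba1} in exactly this way --- to conclude that the ruling pencil is the unique $g^1_4$. Thus the $\FF_2$ (non general) locus is the main obstacle, and Martens' theorem is the tool that resolves it.
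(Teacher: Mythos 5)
Your proposal is correct, and its skeleton is the paper's: extremality plus Remark \ref{cone1}(c) forces $X$ onto a smooth rational normal scroll, solving \eqref{sd} gives $X\in|4H-L|$, and \eqref{sdn}, \eqref{sf} give $\dim=39+29=68$, with the ruling cutting out a $g^1_4$. You diverge in two places, both legitimately. For irreducibility the paper simply cites \cite[Corollary 3.16, p.~100]{he}, whereas you rebuild it from the incidence correspondence $\{(S,X)\}$, using the uniqueness of the minimal-degree surface (Remark \ref{ab00}) and the irreducibility of $\h{S}(5)$; this is self-contained, and the only point left tacit is that the fibres over the $\FF_2$-type scrolls are also non-empty, irreducible and $39$-dimensional (they are: $4h+11f$ is very ample on $\FF_2$ since $11\ge 4\cdot 2+1$, so Bertini applies), which the constant-fibre argument needs. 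For the uniqueness of the $g^1_4$, the paper disposes of everything with the single phrase ``unique by the Castelnuovo--Severi inequality,'' while you split into the $\FF_0$ case (Proposition \ref{z5}, applicable since $(a,b)=(4,7)$ satisfies $3\le a<b\le 2a-1$) and the $\FF_2$ case (\cite[Corollary 1]{Martens}); this mirrors exactly how the paper itself argues later, in Remark \ref{baba1} and Proposition \ref{z6}(a), and is more explicit, since Castelnuovo--Severi applied only to pairs of $g^1_4$'s rules out two independent pencils ($g\le 9$) but not by itself the dependent, bielliptic scenario you rightly isolate. One small inaccuracy: your remark that on $\FF_2$ ``there is no coprime auxiliary pencil to feed into Castelnuovo--Severi'' is not quite true --- a general sub-pencil of the very ample $|\Oo_X(1)|$ is a base-point-free pencil of odd degree $15$, and Castelnuovo--Severi applied to it together with a hypothetical degree-$2$ map onto a rational (resp.\ elliptic) curve gives $g\le 14$ (resp.\ $g\le 16$), both contradicting $g=18$; this is presumably the reading under which the paper's one-line claim is already complete. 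This does not affect your proof, since the route through Martens' theorem is valid, and your explicit exclusion of $g^1_2$ and $g^1_3$ (via the very ample special $\Oo_X(1)$ and Castelnuovo--Severi) fills in steps the paper leaves implicit.
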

\begin{proof}
The irreducibility follows directly from \cite[Corollary 3.16, p. 100]{he}.
Since $X$ is an extremal curve, $X\subset S\subset\PP^5$ where $S$ is a smooth rational normal scroll by Remark \ref{cone1} (c). 
By solving 
\eqref{sd}  for $p_a(X)=18$, 
we have $X\in |4H-L|$. By \eqref{sdn} and \eqref{sf}, we have $$\dim\HO{15,18,5}=\dim|4H-L|+\dim\h{S}(5)=68.$$
$X$ has a $g^1_4$ cut out by the ruling of the scroll which is unique by the Castelnuvo-Severi inequality.
\end{proof}
Since $\deg (X)>2\deg(S)$, the theorem of Bezout gives $|\Ii_X(2)|=|\Ii_S(2)|$.
Since $S$ is cut out by quadrics,
$S$ is the base locus of $|\Ii_X(2)|$ and therefore $X\in\HO{15,18,5}$ is contained in a unique minimal degree surface; cf. \cite[p. 120]{ACGH}. There are two non isomorphic rational normal scrolls in $\PP^5$, which are the images of $\FF_0$ and $\FF_2$ under the morphism induced by appropriate very ample linear systems. Let $\Lambda_0$ ($\Lambda_2$ resp.) denotes the locus of smooth curves in $\HO{15,18,5}$ 
contained in $\FF_0$ ($\FF_2$ resp.). 
Since each $X\in \HO{15,18,5}$ is contained in a unique minimal degree surface, $\Lambda _2\cap \Lambda _0=\emptyset$. However we want to prove something stronger: we want to prove that no element of $\Lambda_0$ is isomorphic to an element of $\Lambda_2$.
We will use the following lemma which asserts that the  first scrollar invariant of the unique $g^1_4$ on $X\in \HO{15,18,5}$ detects the integer $e\in \{0,2\}$ such that $X\in \Lambda_e$.

\begin{lemma}\label{le1}
Take $e\in \{0,2\}$ and $X\in \Lambda _e$. Let $R$ be the only degree $4$ line bundle on $X$ such that $h^0(X,R)=2$. Let $c$ be the minimal integer such that $h^0(R^{\otimes c}) \ge c+2$, the first scrollar invariant of $|R|$. Then we have the following two cases for the integer $c$:
\[
\begin{cases}
 ~~e=0,  ~c=7, ~h^0(R^{\otimes7})=11\\

~e=2, ~c= 5, ~h^0(R^{\otimes 5})=7.
\end{cases}
\]
\end{lemma}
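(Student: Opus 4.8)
The plan is to pin down $R$ explicitly as the restriction of the ruling and then to compute $h^0(X,R^{\otimes n})$ for every $n$ directly on the minimal surface containing $X$, so that $c$ can be read off as the first index where $h^0$ exceeds $n+1$. First I would invoke Proposition \ref{maing=18}: $X$ lies on a unique rational normal scroll $S$, isomorphic either to $\FF_0\cong Q$ or to $\FF_2$, and the unique $g^1_4$ is cut out by the ruling $|L|$. Since $R$ is characterized as the only degree $4$ line bundle with $h^0=2$, we must have $R=\Oo_X(L)$ and hence $R^{\otimes n}=\Oo_X(nL)$. Both degree $4$ scrolls in $\PP^5$, namely $S(2,2)=\FF_0$ and $S(1,3)=\FF_2$, are smooth, so I may compute entirely on $\FF_e$.

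Next I would record the class of $X$ on each surface. Using $\Oo_S(H)\cong\Oo_Q(1,2)$, $\Oo_S(L)\cong\Oo_Q(0,1)$ for $e=0$ and $\Oo_S(1)\cong\Oo_{\FF_2}(h+3f)$ for $e=2$ (as in Remark \ref{baba1}), Proposition \ref{maing=18} gives $X\in|\Oo_Q(4,7)|$ when $e=0$ and $X\in|\Oo_{\FF_2}(4h+11f)|$ when $e=2$. Twisting the ideal sheaf sequence of $X$ by $R^{\otimes n}$ yields
\[
0 \to \Oo_{\FF_e}(nL-X) \to \Oo_{\FF_e}(nL) \to \Oo_X(nL) \to 0,
\]
where $nL-X$ equals $\Oo_Q(-4,n-7)$ for $e=0$ and $\Oo_{\FF_2}(-4h+(n-11)f)$ for $e=2$. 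In both cases the coefficient of the section class is $-4<0$, so $h^0(\FF_e,nL-X)=0$; moreover $\Oo_{\FF_e}(nL)$ is pulled back from the base $\PP^1$, giving $h^0=n+1$ and $h^1=0$ for $n\ge 0$. The long exact sequence then collapses to
\[
h^0(X,R^{\otimes n}) = (n+1) + h^1(\FF_e,\, nL-X),
\]
reducing the whole problem to a single $h^1$ on the surface.

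Finally I would evaluate these $h^1$ groups. For $e=0$ this is immediate from the Künneth formula, which gives $h^1(\Oo_Q(-4,n-7))=3\max(0,n-6)$; hence $h^0(R^{\otimes n})=n+1$ for $n\le 6$ while $h^0(R^{\otimes 7})=11$, forcing $c=7$. For $e=2$ there is no product splitting, and here I would instead combine Riemann--Roch on $\FF_2$ with Serre duality, using $h^2(\FF_2,-4h+mf)=h^0(\FF_2,2h-(m+4)f)$ together with $h^0(\FF_2,-4h+mf)=0$, to obtain $h^1(\FF_2,-4h+(n-11)f)=0$ for $n\le 4$ and $=1$ for $n=5$; hence $h^0(R^{\otimes n})=n+1$ for $n\le 4$ while $h^0(R^{\otimes 5})=7$, forcing $c=5$. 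Comparing each value with $n+2$ then gives the two cases of the statement. The main technical point is precisely this last computation on $\FF_2$: unlike the $\FF_0$ case it cannot be dispatched by a one-line Künneth argument, and I must both produce the value $h^1=1$ at the jump and, equally importantly, verify that $h^1$ vanishes for all smaller $n$, so that $c$ is genuinely the \emph{first} index with $h^0(R^{\otimes c})\ge c+2$.
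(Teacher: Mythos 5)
Your proposal is correct and follows essentially the same route as the paper's proof: identifying $R$ with the restriction of the ruling class, writing down the same ideal-sheaf sequences $0 \to \Oo_{\FF_e}(nL-X) \to \Oo_{\FF_e}(nL) \to \Oo_X(nL) \to 0$ with $X\in|\Oo_Q(4,7)|$ resp. $|\Oo_{\FF_2}(4h+11f)|$, and reducing everything to $h^1(\FF_e, nL-X)$. The only difference is that you spell out the cohomology evaluations (K\"unneth for $e=0$, Riemann--Roch plus Serre duality for $e=2$) that the paper dismisses as ``routine computation,'' and your values agree with the stated conclusions.
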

\begin{proof}
If $e=0$ we have $X\in |\Oo_Q(4,7)|$ with $R =\Oo_X(0,1)$. Fix an integer $t\ge 1$. From the cohomology sequence of the exact sequence
\begin{equation*}\label{eqle1}
0 \to \Oo_Q(-4,t-7)\to \Oo_Q(0,t)\to \Oo_X(0,t)\to 0,
\end{equation*}
we have $h^0(R^{\otimes t}) =  h^0(Q,\Oo_Q(0,t))+h^1(Q,\Oo_Q(-4,t-7))$ and hence 
$h^0(R^{\otimes7})=11$ and $c=7$.
For $e=2$ we have $X\in |4H-L|=|4h+11f|$ with $R =\Oo_X(f)$.  We use the exact sequence
\begin{equation*}\label{eqle2}
0 \to \Oo_{\FF_2}(-4h+(t-11)f)\to \Oo_{\FF_2}(tf)\to \Oo_X(tf)\to 0,
\end{equation*}
to get $h^0(R^{\otimes 5})=7$ and $c=5$. We omit routine computation.
\end{proof}

The following is an immediate consequence of Lemma \ref{le1}.
\begin{proposition} No smooth element of $\Lambda_2$ is isomorphic to a smooth element of $\Lambda_0$.
\end{proposition}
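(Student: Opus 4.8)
The plan is to promote the first scrollar invariant $c$ of Lemma \ref{le1} from a datum that a priori depends on the embedding to a genuine isomorphism invariant of the abstract curve, and then to read off the contradiction directly from the two values $c=7$ and $c=5$ computed there. The whole argument is a formal consequence of Lemma \ref{le1} once the intrinsic nature of $c$ is pinned down.

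First I would record that for every $X\in\HO{15,18,5}$ the degree $4$ pencil $R$ is canonically attached to $X$. Indeed, by Proposition \ref{maing=18} such an $X$ is $4$-gonal with a unique $g^1_4$, so $R$ is the only line bundle on $X$ of degree $4$ with $h^0(X,R)=2$, the uniqueness being forced by the Castelnuovo--Severi inequality. Hence any isomorphism of abstract curves between two members of $\HO{15,18,5}$ must carry the $g^1_4$ of one to the $g^1_4$ of the other.

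Then I would argue by contradiction. Suppose $X\in\Lambda_0$ and $X'\in\Lambda_2$ were isomorphic as abstract curves, say via $\phi\colon X\xrightarrow{\;\cong\;}X'$, and let $R$ (resp. $R'$) be the unique degree $4$ pencil on $X$ (resp. $X'$). By the uniqueness just noted, $\phi^\ast R'\cong R$, so that $h^0(X,R^{\otimes t})=h^0(X',(R')^{\otimes t})$ for every $t\ge 1$. In particular the minimal integer $c$ for which $h^0(R^{\otimes c})\ge c+2$ is the same for $X$ and for $X'$. But Lemma \ref{le1} yields $c=7$ for $X\in\Lambda_0$ and $c=5$ for $X'\in\Lambda_2$, which is absurd. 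Therefore no smooth element of $\Lambda_2$ is isomorphic to a smooth element of $\Lambda_0$.

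The only step deserving emphasis — and the one I regard as the conceptual crux rather than a computational hurdle — is the transition from the extrinsic invariant $e\in\{0,2\}$ (the scroll type, which depends on the embedding $X\subset\PP^5$) to the intrinsic quantity $c$. This transition is exactly what the uniqueness of the $g^1_4$ provides: it makes $R$, the full sequence $h^0(R^{\otimes t})$, and hence $c$, depend only on the isomorphism class of $X$. Beyond this observation there is no obstacle, since the numerical content has already been carried out in Lemma \ref{le1}.
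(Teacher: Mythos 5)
Your proof is correct and is exactly the argument the paper intends: the paper states the proposition as an ``immediate consequence'' of Lemma \ref{le1}, and your write-up simply makes explicit the intended reasoning that the uniqueness of the $g^1_4$ (Proposition \ref{maing=18}, via Castelnuovo--Severi) turns the first scrollar invariant $c$ into an invariant of the abstract curve, so that $c=7$ for $\Lambda_0$ versus $c=5$ for $\Lambda_2$ rules out any isomorphism. No discrepancy with the paper's approach.
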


\begin{proposition}\label{z6}
Take $X\in \Lambda_0$. Then:

\begin{itemize}
\item[(a)] $X$ is $4$-gonal with a unique $g^1_4$, no base-point-free $g^1_c$ for $5\le c\le 6$ and a unique $g^1_7$. 

\item[(b)]   $\mu^{-1}(\mu(X)) =\mathrm{Aut}(\PP^5)X$. 

\item[(c)]  $\dim \mu(\HO{15,18,5}) = \dim \mu(\Lambda_0) =33$.
\end{itemize}
\end{proposition}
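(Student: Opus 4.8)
The plan is to reduce all three parts to the results already proved for smooth curves on a quadric. For $X\in\Lambda_0$ the unique minimal degree surface $S\supset X$ is a scroll isomorphic to $\FF_0\cong Q=\PP^1\times\PP^1$, with $\Oo_S(1)\cong\Oo_Q(1,2)$; by Proposition~\ref{maing=18} we have $X\in|4H-L|$, which under this identification is $|\Oo_Q(4,7)|$ (as recorded in the proof of Lemma~\ref{le1}). Indeed $(a,b)=(4,7)$ is forced, being the only solution of the degree relation $2a+b=15$ together with the genus relation $(a-1)(b-1)=18$. Since $3\le a<b=2a-1$, part (a) is then an immediate application of Proposition~\ref{z5}: the unique $g^1_4$ is cut by $|\Oo_Q(0,1)|$, there is no base-point-free $g^1_5$ or $g^1_6$, and the unique base-point-free $g^1_7$ is cut by $|\Oo_Q(1,0)|$.

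For part (b), first note that $\mu^{-1}(\mu(X))\subset\Lambda_0$: we have $\HO{15,18,5}=\Lambda_0\sqcup\Lambda_2$, and by the scrollar invariant computation of Lemma~\ref{le1} no element of $\Lambda_0$ is isomorphic to an element of $\Lambda_2$. The inclusion $\Aut(\PP^5)X\subset\mu^{-1}(\mu(X))$ is clear. For the reverse inclusion, take $Y\in\Lambda_0$ with $Y\cong X$. Then $Y$ lies on its own scroll $S_Y\cong\FF_0$; since $S_Y\cong S$ as abstract varieties, Remark~\ref{z5.011} provides $w\in\Aut(\PP^5)$ with $w(S_Y)=S$, so that $w(Y)\subset S=Q$ lies in $|\Oo_Q(4,7)|$ and $w(Y)\cong X$. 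Now $X$ and $w(Y)$ are two abstractly isomorphic smooth members of $|\Oo_Q(4,7)|$ on the same $Q$, and since $2a-1=7>4=a\ge 3$, Corollary~\ref{z5.010} yields $v\in\Aut(\PP^1)\times\Aut(\PP^1)$ with $v(X)=w(Y)$. As $v$ preserves each ruling it preserves the polarization $\Oo_Q(1,2)$, so it extends to an element of $\Aut(\PP^5)$; hence $X$ and $w(Y)$, and therefore $X$ and $Y$, are projectively equivalent. This gives $\mu^{-1}(\mu(X))=\Aut(\PP^5)X$.

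For part (c), recall that $\HO{15,18,5}$ is irreducible of dimension $68$ by Proposition~\ref{maing=18}. The generic member of the irreducible family $\h{S}(5)$ of rational normal scrolls is isomorphic to $\FF_0$, so $\Lambda_0$ is a dense open subset, $\dim\Lambda_0=68$, and $\mu(\HO{15,18,5})=\overline{\mu(\Lambda_0)}$ has the same dimension as $\mu(\Lambda_0)$. By part (b) the fibres of $\mu$ over $\mu(\Lambda_0)$ are exactly the orbits $\Aut(\PP^5)X$; since $X$ has only finitely many automorphisms its stabiliser in $\Aut(\PP^5)$ is finite, so each such orbit has dimension $\dim\Aut(\PP^5)=35$. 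Therefore $\dim\mu(\Lambda_0)=68-35=33$.

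The only genuinely delicate point is the reduction carried out in part (b): one must pass from two a priori distinct scrolls $S$ and $S_Y$ to a single copy of $Q$ (via Remark~\ref{z5.011}) so that Corollary~\ref{z5.010} applies on a common quadric, and then verify that the resulting $v\in\Aut(\PP^1)\times\Aut(\PP^1)$ actually extends to $\Aut(\PP^5)$ because it respects the embedding $\Oo_Q(1,2)$. Everything else is bookkeeping: confirming that the bidegree is forced to be $(4,7)$ rather than $(7,4)$, the density of $\Lambda_0$ in the irreducible $\HO{15,18,5}$, and the finiteness of the projective stabiliser needed for the dimension count.
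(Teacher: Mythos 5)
Your proof is correct and follows essentially the same route as the paper: part (a) via Proposition \ref{z5} applied to the forced bidegree $(4,7)$, part (b) by using Lemma \ref{le1} to stay inside $\Lambda_0$, Remark \ref{z5.011} to move both curves onto a common embedded scroll, Corollary \ref{z5.010} to produce $v\in\Aut(\PP^1)\times\Aut(\PP^1)$, and the polarization argument to extend $v$ to $\Aut(\PP^5)$, and part (c) by the orbit-dimension count $68-35=33$. The extra details you supply (uniqueness of the solution $(a,b)=(4,7)$, density of $\Lambda_0$ in the irreducible $\HO{15,18,5}$, finiteness of the projective stabiliser) are elaborations of steps the paper leaves implicit.
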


\begin{proof}
(a) follows from Proposition \ref{z5}. 
For (b), we fix any $\tilde{X}\in \HO{15,18,5}$ isomorphic to $X$.  Lemma \ref{le1} gives $\tilde{X}\in \Lambda_0$. Let $\tilde{S}$ (resp. $S$) be the minimal degree surface containing $\tilde{X}$ (resp. $X$).
Thus there is $u\in \Aut(\PP^5)$ such that $u(\tilde{S}) = S$; cf. Remark \ref{z5.011}. Call $\Aut^0(S)\cong \Aut(\PP^1)\times\Aut(\PP^1)\subset\Aut(S)$ the connected component of $\Aut(S)$ containing the identity. By Corollary \ref{z5.010} there is $v\in \Aut^0(S)$
such that $v(u(\tilde{X})) =X$. Remark \ref{z5.011} provides the existence of $w\in \Aut(\PP^5)$ such that $w(S) =S$ and $w_{|S} =v$. Thus $w\circ u\in\Aut(\PP^5)$ satisfies  $w\circ u (\tilde{S})=w(S)=S$ and hence $$w\circ u (\tilde{X})=w(u(\tilde{X}))=v(u(\tilde{X}))=X.$$

\noindent(c) follows from 
\begin{align*} \dim \mu(\HO{15,18,5}) &= \dim \mu(\Lambda_0)=\dim\HO{15,18,5}-\dim\mu^{-1}(\mu(X))\\&=\dim\HO{15,18,5}-\dim\Aut(\PP^5)=68-35=33.
\end{align*}
\end{proof}
\subsection{An epilogue, a remark on the larger-than-expected components}

Throughout this paper we dealt with several Hilbert schemes with larger-than-expected components. 
Regarding the existence (or non-existence) of such components in general, the following conjecture is stated in \cite[p. 142]{h1}:

\vni{\bf Conjecture:} (i) If $\h{H}$ is any component of the Hilbert scheme $\HO{d,g,r}$ such that 
the image of the rational map $\h{H}\longrightarrow \h{M}_g$ has codimention $g-4$ or less, then
$$\dim\h{H}=\h{X}{(d,g.r)}.$$
\noindent
(ii) Right after that, it is also remarked in \cite[p. 143]{h1} that {\it ``to be honest, the available evidence suggests that the existence of a number $\beta(g)$ tending linearly to $\infty$ with $g$, such that any such component $\h{H}$ whose image in $\h{M}_g$ has codimension $\beta\le\beta(g)$ has the expected dimension; we use the function $g-4$ just for simplicity"}.

Our results discussed in this paper suggest that the function $\beta(g)=g-4$ may need to be replaced with smaller $\beta(g)$.
\vspace{-2pt}
\begin{remark} 
(i) For $(d,g,r)=(15,15,5)$, $\HO{d,g,r}=\HL{d,g,r}$ is irreducible and $\dim\HO{d,g,r}=64>\h{X}(d,g,r)=62$ by Theorem \ref{main15}. By Theorem \ref{main15}, $\codim \mu(\HO{d,g,r})=3\cdot g-3-(\dim\HO{d,g,r}-\dim\Aut(\PP^r))=13>g-4$ and hence the family $\HO{d,g,r}$ is  {\it special} with big codimension in $\h{M}_g$. Also,  this does not give an evidence to disprove the conjecture.

\vni
(ii) For $(d,g,r)=(15,16,5)$ the three components $\Gamma_i$ ($i=1,2,3$) of $\HO{d,g,r}=\HL{d,g,r}$ have dimension larger than $\h{X}(d,g,r)=60$ by Theorem \ref{main16}. By Proposition \ref{mz6}, $\codim\mu(\Gamma_1)=12=g-4$, hence providing an evidence to the contrary to the conjecture if  $\beta(g)=g-4$. 
\vni
(iii) For $(d,g,r)=(15,13,5)$, $\HO{d,g,r}$ is reducible with two components, one with the expected dimension and the other one with more than expected dimension whose image under the moduli map $\mu$ has codimension $g-4=\codim\h{M}^1_{g,3}=3g-3-(2g+1)$; Proposition \ref{g=13}. Hence the literal statement of the above conjecture turns out to be untrue if one puts $\beta(g)=g-4$. However, elements of this component are not linearly normal. There are other examples of this kind suggesting that $\beta(g)=g-4$ is rather too large to ensure the validity of the above conjecture. By \cite[Proposition 3.5]{CKP}, there exists a component 
of the Hilbert scheme with $(d,g,r)=(2g-2-2k,g,r)$ -- subject to several crude and technical numerical conditions such as $2\le \frac{g-4}{k-1},\frac{2g+2-2k}{3}-1<r\le
g-2-2k,g>(k-1)^2$ -- dominating $\h{M}^1_{g,k}$, whereas $\codim\h{M}^1_{g,k}=g+2-2k\le \beta(g)=g-4$. Again, curves in this component are not linearly normal. 
\vni
(iv) If one focus on components consisting of linearly normal curves, $\beta(g)=g-5$  would be a 
choice suggested by the example in (ii). The authors do not know of any example of a component with dimension greater-than-expected
such that the image under the moduli map $\mu$ has codimension at most $g-5$.
\end{remark}
\vspace{-15pt}

\end{document}